\newtheorem{lemma}{Lemma}[section]
\newtheorem{corollary}{Corollary}[section]
\newtheorem{assumption}{Assumption}
\newtheorem{theorem}{Theorem}[section]
\newtheorem{definition}[theorem]{\bf Definition}
\newtheorem{remark}{Remark}[section]
\newtheorem{example}{Example}[section]
\definecolor{red}{rgb}{1,0.2,0.2}
\newcounter{algorithmcounter}
\newenvironment{algorithm}{\begin{quote}%
    \refstepcounter{algorithmcounter}%
  \textbf{Algorithm \arabic{algorithmcounter}}%
  \quad
}{%
\end{quote}%
}
\newcommand*{\rom}[1]{\expandafter\@slowromancap\romannumeral #1@} 
\title{A Graph-Algorithmic Approach\\ for the Study of Metastability 
in Markov Chains}
\author[1]{Tingyue Gan \thanks{tgan@math.umd.edu}}
\author[1]{Maria Cameron \thanks{cameron@math.umd.edu}}
\affil[1]{Department of Mathematics, University of Maryland, College Park, MD 20742}
 \date{\today}   
\begin{document}

\maketitle

\begin{abstract}
Large continuous-time Markov chains with exponentially small 
transition rates arise in modeling  complex systems in physics, chemistry and 
biology. We propose a constructive graph-algorithmic approach to determine the sequence of critical timescales
 at which the qualitative behavior of a given Markov chain changes, and give an effective description of the dynamics on each of them.
This approach is valid for both time-reversible and time-irreversible Markov processes, with or without symmetry.
Central to this approach are two graph algorithms, Algorithm 1 and Algorithm 2, 
for obtaining the sequences of the critical timescales 
and the hierarchies of Typical Transition Graphs or T-graphs { indicating the most likely transitions} in the system
 {without and with} symmetry respectively. 
The sequence of {critical} timescales includes the subsequence of the reciprocals { of the real parts } of eigenvalues.
{Under a certain assumption}, we prove sharp asymptotic estimates for eigenvalues 
(including prefactors) and show how one can extract them 
from the output of Algorithm 1. We discuss the relationship between Algorithms 1 and 2, and explain how one needs to interpret the 
output of Algorithm 1 if it is applied in the case with symmetry { instead of Algorithm 2}.
Finally, we analyze an example motivated  by R. D. Astumian's model of the dynamics of kinesin, a molecular motor, 
{ by means of   Algorithm 2}. 
\end{abstract}

\section{Introduction}

Phase transitions in non-equilibrium systems, conformational changes in molecules or atomic clusters, 
financial crises, global climate changes, and  genetic mutations exemplify the phenomenon 
where a seemingly stable behavior of 
a system in hand, persisting for a long time, undergoes a sudden qualitative change.
Such systems are often referred to as metastable.
A popular choice of mathematical model for investigating such systems is  a Markov jump process with  a finite number of states 
and exponentially distributed holding times. The dynamics of the process is described by the generator matrix $L$. 
Each off-diagonal entry of $L$ is  the transition rate from state $i$ to state $j$.
Often, it takes the form  \cite{wales0,FW}
\begin{equation}
\label{Lij}
L_{ij} = \kappa_{ij}\exp(-U_{ij}/\varepsilon),
\end{equation}
where { $\kappa_{ij} > 0$ is the pre-factor, the number $U_{ij} > 0$} is the exponential factor or order, 
and  $\varepsilon > 0$ is a small parameter. In many cases, the pre-factors $\kappa_{ij}$ are not available, and the rates are 
determined only up to the exponential order:
\begin{equation}
\label{Lij1}
L_{ij}\asymp\exp(-U_{ij}/\varepsilon),~~{\rm i.e.}~~{ \lim_{\varepsilon\rightarrow 0}\varepsilon\log L_{ij}(\varepsilon) = -U_{ij}}.
\end{equation}

The reciprocal $L_{ij}^{-1}$ is the expected waiting time for a jump from state $i$ to state $j$.
The diagonal entries $L_{i i}$ are defined so that the row sums of $L$ are zeros, i.e.,  $L_{ii} = - \sum_{j \neq i} L_{ij}$. 
Hence, $-L_{ii}$ is the  escape rate from state $i$. 

A Markov chain with pairwise rates of the form \eqref{Lij} or \eqref{Lij1} can be associated with a weighted directed graph $G(\mathcal{S},\mathcal{A},\mathcal{U})$
where the set of vertices $\mathcal{S}$ is the set of states, the set of arcs $\mathcal{A}$ includes only such arcs $i\rightarrow j$ that $L_{ij}>0$,
and  $\mathcal{U}$ is the set of arc weights. An arc $i\rightarrow j$ has weight $U_{ij}$ if { $L_{ij}\asymp\exp(-U_{ij}/\varepsilon)$}.
We set $U_{ij}=+\infty$ if $L_{ij} = 0$, i.e., if there is no arc $i\rightarrow j$ in the graph.

In this work, we will consider continuous-time Markov chains with finite numbers of states  
and pairwise transition rates of the form \eqref{Lij} or \eqref{Lij1}. 
{ Under this framework, the timescales on which various transition processes take 
place in the system are well-separated as $\varepsilon$ tends to zero. }
Our goal is to find a constructive way to calculate the sequence of
critical timescales at which the  behavior of such a Markov chain undergoes qualitative changes, and give effective descriptions of
its behavior on the whole range of timescales from zero to infinity. 
Imagine that the time evolution of 
a given Markov chain is observed for some not very large fixed number of time units. We want to be able to predict 
what transitions the observer will see 
depending on the initial state and the size of the time unit. Such a prediction is easy if the time unit tends to zero.
Then { it is extremely unlikely to observe any transitions}. If the time unit tends to infinity, 
then an equilibrium probability distribution will be observed.  
However, even in this case, the determination of arcs along which the transitions will be most likely observed
remains a nontrivial problem. On timescales between those two extremes, the
problem of giving an effective description of the 
observable transitions is { difficult}.  
Prior to presenting our solution to it, we give an account of works that provided us with necessary background 
{and/or} inspiration.

\subsection{Background}
Long-time behavior of stochastic systems with rare transitions has been attracting attention of mathematicians for the last fifty years. 
Freidlin and Wentzell 
developed the Large Deviation Theory \cite{FW} in 1970s. They 
showed that the long-time behavior of a system 
evolving according to the SDE
$$
dX_{t}=b(X_t)dt+\sqrt{\varepsilon}dW_t
$$
 can be modeled by means of continuous-time 
Markov chains. The  states of the Markov chain correspond to attractors of  the corresponding ODE $\dot{x}=b(x)$.
The pairwise rates are  logarithmically equivalent to $\exp(-U_{ij}/\varepsilon)$, where $U_{ij}$ is the quasi-potential, 
the quantity characterizing the difficulty of the passage from attractor $i$ to attractor $j$.
Recently, Buchet and Reygner \cite{bouchet} calculated the pre-factor $\kappa_{ij}$ in the case where 
state $i$ represents a stable equilibrium point separated from the attractor corresponding to state $j$ by a Morse index one saddle.

In early 1970s, Freidlin proposed to describe the long-time behavior of the system via a hierarchy of cycles (we refer to them as Freidlin's cycles)
\cite{freidlin-cycles,freidlin-physicad,FW} in the case where { each cycle has a 
unique main state, a unique exit arc, and the exit rates for all cycles are distinct.}
This hierarchy can be mapped onto a tree. In the time-reversible case, this tree is a complete binary tree \cite{cam1}. 
Later, in 2014, Freidlin extended this approach to the case with symmetry \cite{freidlin_symmetry}
replacing the hierarchy of cycles with the hierarchy of Markov chains.
Each cycle{/}Markov chain in  Freidlin's hierarchy  is born at a specific critical  timescale, {which is the reciprocal of its rotation rate}. 
The corresponding hierarchy of timescales has only partial but not complete order:
cycles{/Markov chains} of the same order typically have different  timescales. 
 {The birth timescales of Freidlin's cycles/Markov chains constitute an important subset of critical timescales of the system.}

{The other important subset of critical timescales is given by the reciprocals of the absolute values of the real parts} of
nonzero eigenvalues of the generator matrix $L$. 
Using the classic potential theory as a tool, Bovier and collaborators \cite{Bovier_1, Bovier_2, Bovier_3,Bovier2016} 
derived sharp estimates for low lying spectra of time-reversible Markov chains (all their eigenvalues are real) with pairwise rates not necessarily of the form \eqref{Lij},
{ defined a hierarchy of metastable sets}, and  
 identified the link between eigenvalues and expected exit times.  
A more general study, utilizing almost degenerate perturbation theory, 
was  conducted by Gaveau and Schulman \cite{GS}, 
in which a spectral definition of metastability was given for a broad class of Markov chains. 
The Transition Path Theory proposed by E and Vanden-Eijnden and extended to Markov chains by Metzner et al \cite{dept}, 
can be viewed as an extension of the potential-theoretic approach 
 exercised by Bovier et al in the time-irreversible case. It is focused on the study of transition pathways 
 between any particular pair of metastable sets.

Asymptotic estimates of the exponential orders of {the real parts} of eigenvalues of the generator matrix  of any Markov chain
with pairwise rates of the form \eqref{Lij1} 
were developed by Wentzell in early 1970s \cite{Wentzell}. 
 { These estimates are given in terms of the optimal W-graphs, i.e.,
solutions of certain combinatoric optimization problems on so-called W-graphs.
Assuming the more concrete  form \eqref{Lij} of the pairwise transition rates, we have derived asymptotic estimates for
eigenvalues including pre-factors  for the case  where all optimal W-graphs are unique.
Greedy graph algorithms 
to solve these optimization problems in case where  all optimal W-graphs are unique were introduced in \cite{Cameron} and \cite{CG}. 
The one in \cite{Cameron} assumes time-reversibility and finds the sequence of asymptotic estimates for eigenvalues starting from the smallest ones.
The greedy/dynamical programing ``single-sweep algorithm" in \cite{CG} does not require time-reversibility and computes the sequence of 
asymptotic estimates for eigenvalues starting from the largest ones. }
Sharp estimates for eigenvalues for the time-reversible case with symmetry were obtained by Berglund and Dutercq 
using tools from the group representation theory \cite{berglund}.

\subsection{Summary of main results}
The starting point of this work is the single-sweep algorithm. { 
In \cite{CG}, we introduced } it in the form convenient for programming and used it
only for the purpose of finding asymptotic estimates for eigenvalues and eigenvectors of the large time-reversible 
Markov chain with 169523 states representing the energy landscape of the Lennard-Jones-75 cluster. 

 In this work, we extend the single-sweep algorithm \cite{CG}
{ for finding the whole sequence of critical timescales corresponding to both,  births of Freidlin's cycles and reciprocals of 
the absolute values of  the real parts of eigenvalues.
Besides the set of critical timescales, the output will include  
 the hierarchy of \emph{Typical Transition Graphs} (\emph{T-graphs}) introduced in this work, 
 that mark the transitions most likely to observe up to certain timescales.
Wentzell's optimal W-graphs are readily extracted from the T-graphs.}
We will refer to this extension of the single-sweep algorithm as Algorithm 1.
The presentation of Algorithm 1 is significantly different from the one in \cite{CG}.
Each step of Algorithm 1 is motivated by the consideration of 
the dynamics of the system at an appropriate timescale.

Algorithm 1 offers a constructive way to simultaneously solve both Freidlin's problem of building the hierarchy of cycles and
Wentzell's problem of finding asymptotic estimates for eigenvalues. 
Contrary to 
\cite{freidlin-cycles,freidlin-physicad,FW},
Freidlin's cycles are found in the decreasing order of their { rotation rates}.
Asymptotic estimates for eigenvalues, containing pre-factors (if so do the input data), 
{ are found during the run} of Algorithm 1.
Our proof for sharp asymptotic estimates of eigenvalues of the generator matrix for 
time-irreversible Markov chains for which all optimal W-graphs are unique is provided.

{  From the programming  point of view,
Algorithm 1 and the single-sweep algorithm in \cite{CG} differ in their stopping criteria. 
Algorithm 1 stops as it computes all Freidlin's cycles.
The costs of the single-sweep algorithm and Algorithm 1 and the difference between them  depend on the structure
of the graph. 
For a graph with $n$ vertices and maximal vertex degree $d$, the cost of both algorithms are at worst $O((nd)^2\log(nd))$. 
}

{ Algorithm 1 is designed for the case with \emph{no symmetry, i.e.,
where all critical timescales are distinct and all Freidlin's cycles have unique exit arcs}.
Markov chains arising in modeling complex physical systems might or might not be symmetric.
For example, the networks representing energy landscapes of biomolecules and clusters of particles interacting 
according to a long-range potential \cite{wales_book,wales_landscapes,pathsample,web} are mostly non-symmetric, 
while the networks representing the dynamics of  particles interacting according to
a short-range potential \cite{short1,short2,short3,holmes} are highly symmetric.
}

{ To handle the case with symmetry, we have developed a modification of Algorithm 1 and called it Algorithm 2.
Algorithm 2 computes the sequence of distinct values of critical timescales and the hierarchy of 
T-graphs.  
}

{ The presence of symmetry is not necessarily apparent in a Markov chain.
Algorithm 1 can run in the case with symmetry and produce an output that does not reflect its presence.
However, the output will be inaccurate in some aspects. 
The relationship between the outputs of Algorithms 1 and 2
and a recipe for the interpretation of the output of Algorithm 1 used in a symmetric case 
is summarized in a theorem proven in this work.}

Algorithms 1 and 2 constitute a graph-algorithmic approach to the study of metastability in 
continuous-time Markov chains with exponentially small transition rates. 

Algorithms 1 and 2 are illustrated on examples. 
{The case with symmetry is not as graphic as the one without it, however, 
it is important, as symmetry often occurs in Markov chains modeling natural systems.
One such example,} motivated by Astumian's model \cite{Astumian} of the directed motion of kinesin protein, a molecular motor, is analyzed 
by means of Algorithm 2. The stochastic switch between two chemical states, breaking the detailed balance in this system, 
enables the directed motion (walking). The rate of the chemical switch is treated as a parameter.
The most likely walking style  is indicated for each rate value.

The rest of the paper is organized as follows. 
Some necessary background on continuous-time Markov chains and optimal W-graphs is provided in Section \ref{sec2}.
{The T-graphs are introduced in Section \ref{sec:Tgraphs}.}
Algorithm 1 is presented and discussed in Section \ref{sec:alg1}. 
In Section \ref{sec:sym}, we address the case with symmetry and
introduce Algorithm 2.
The interpretation of the output of Algorithm 1 applied in the case with symmetry is given in Section \ref{sec:sym1}.
The molecular motor example is investigated in Section \ref{sec:MM}. { We summarize our work in Section \ref{sec:conclusion}.}
Appendices \ref{App_A} and \ref{App_B} contain proofs of theorems.

\section{Significance and nested property of optimal W-graphs}
\label{sec2}
In this Section, we provide some necessary background and discuss some of our recent results that are essential for 
the presentation of our new developments.

\subsection{Continuous-time Markov chains}
{ Let $G(\mathcal{S},\mathcal{A},\mathcal{U})$ be a } weighted directed graph  
associated
with a given continuous-time Markov chain with pairwise transition rates of the form \eqref{Lij} or \eqref{Lij1}.
Throughout the rest of the paper we adopt the following assumptions.
\begin{assumption}
\label{A1}
The set of states $\mathcal{S}$ is finite and $|S|=n$.
\end{assumption}
\begin{assumption}
\label{A2}
The graph $G(\mathcal{S},\mathcal{A},\mathcal{U})$ has a unique closed communicating class.
\end{assumption}
We remind (see e.g. \cite{norris}) that  a subset of vertices $C$ is called a \emph{communicating class}, if there is a directed path  leading from any vertex $i\in C$
to any vertex $j\in C$. 
A communicating class $C$ is called \emph{closed} if for any vertex $i\in C$ the following condition holds: 
if there is a directed path from $i\in C$ to $j$, then $j\in C$.
A trivial example of a closed communicating class is an \emph{absorbing state}, i.e., a state with no outgoing arcs. 
All states of an \emph{irreducible} Markov chain belong to the same closed communicating class.

In a weighted directed graph $G(\mathcal{S},\mathcal{A},\mathcal{U})$ with a single closed communicating class $C$,
all states in $C$ are \emph{recurrent}, while the rest of the states $\mathcal{S}\backslash C$ are \emph{transient}.
Assumption \ref{A2} guarantees that the invariant distribution 
 $\pi$ satisfying $\pi L=0$, $\pi_i\ge 0$, $i\in \mathcal{S}$, and $\sum_{i\in\mathcal{S}}\pi_i = 1$,
 is unique and supported only on the recurrent states.
Note that $\pi$ is the left eigenvector of $L$ corresponding to the zero eigenvalue $\lambda_0=0$.
Due to the zero row sum property of $L$,  the corresponding right eigenvector   is  $\phi_0= [1,\ldots,1]^T$.

Due to the weak diagonal dominance of $L$ and 
non-positivity of its diagonal entries, all its  eigenvalues have non-positive real parts. 
 This can be shown, e.g., by applying Gershgorin's circle theorem \cite{Gershgorin}. 
{ Let $z_m = -\lambda_m + i\mu_m$, $m = 1,\ldots, n-1$,  be nonzero eigenvalues of $L$, ordered so that $0<\lambda_1\le\ldots\le\lambda_{n-1}$, 
and column vectors $\phi_m$ and  row vectors $\psi_m$ be the corresponding right and left eigenvectors. }
If $L$ is diagonalizable, one can write the probability distribution $p(t)$, 
governed by the Fokker-Planck equation $d p/ d t = p L$, in the following form:
\begin{align}
p(t) = p(0) e^{t L} =  \pi + \sum_{m = 1}^{n-1} e^{-\lambda_m t}e^{i\mu_mt}(p(0) \phi_m ) \psi_m.  \label{pdf}
\end{align}
Considering the time as a function of $\varepsilon$, one can infer from Eq. \eqref{pdf} that the $k$th eigen-component of $p(t(\varepsilon))$
 is significant only  for  time $ t(\epsilon)$
 of an exponential order not greater than the one of $\lambda_m^{-1}(\epsilon)$. 


\subsection{Optimal W-graphs}
\label{sec:owg}
The W-graphs were introduced by Wentzell \cite{Wentzell} in order to obtain asymptotic estimates for exponential factors of real parts of 
eigenvalues of the generator matrices of Markov chains with pairwise rates of the form \eqref{Lij1}. The W-graphs  generalize  
the $i$-graphs introduced by Freidlin \cite{freidlin-cycles,freidlin-physicad}
for building the hierarchy of cycles describing the long-term behavior of such Markov chains.

\begin{definition} {\bf (W-graph)}
Let $G(\mathcal{S}, \mathcal{A},\mathcal{U}) $ be a weighted directed graph satisfying Assumptions \ref{A1} and \ref{A2}.  
A W-graph with $m$ sinks  is a subgraph $g_m$ of $G$  with the same set of vertices $\mathcal{S}$ such that 
\begin{enumerate}[(i)]
\item  $n-m$ vertices have exactly one outgoing arc in $g_m$, while the other $m$ vertices, called sinks, have no outgoing arcs in $g_m$;
\item  $g_m$ contains no cycles.
\end{enumerate}
\end{definition}

Assumptions \ref{A1} and \ref{A2} guarantee the existence of a W-graph with any $1\le m\le n$ sinks.
If $m = n$, there is the unique W-graph, and it has no arcs.  
A W-graph with $m$ sinks is a forest consisting of $m$ { connected components. Each connected component is a directed in-tree, i.e., a 
rooted tree with arcs pointing towards its root (sink).} 
The collection of all possible W-graphs with $m$ sinks will be denoted by $\mathcal{G}(m)$.

Asymptotic estimates for eigenvalues are defined by a collection of W-graphs with minimal possible sums of weights of their arcs \cite{Wentzell}.
We call such graphs optimal W-graphs and denote by $g^{\ast}_m$. Precisely,
\begin{definition}{\bf(Optimal W-graph)}  
\label{defW}
$g^{\ast}_m$ is an optimal W-graph  with $m$ sinks if and only if
\begin{equation}
\label{optwg}
g^{\ast}_m = \arg \min_{ g \in \mathcal{G}(m) }  \mathcal{V} (g),~~\quad{\rm where}
~~\mathcal{V}(g):=\sum_{(i\rightarrow j)\in g}U_{ij}.
\end{equation} 
\end{definition} 
{ For brevity, we write $(i\rightarrow j)\in g$ meaning $(i\rightarrow j)\in\mathcal{A}_g$, where $\mathcal{A}_g$ is the set of arcs of $g$.}
We emphasize that, in the optimal W-graph, the sum of weights $\mathcal{V}(g)$ needs to be minimized simultaneously
with respect to the choices of sinks and arcs.


\subsection{Asymptotic estimates for eigenvalues}
 Wentzell established the following asymptotic estimates for eigenvalues in terms of the optimal W-graphs \cite{Wentzell}:
 \begin{theorem}{ (Wentzell, 1972)}\footnotemark[1] 
 \label{W_asymeigval}
  Let $ ~0, -\lambda_1+i\mu_1,~ -\lambda_2+i\mu_2,~ \cdots,~ -\lambda_{n-1}+i\mu_{n-1}$ be the eigenvalues of the generator matrix with 
  off-diagonal entries of the order of $\exp ( - U_{ij} / \varepsilon )$, ordered so that $0 < \lambda_1 \leq \lambda_2 \leq \cdots \leq \lambda_{n-1}$. 
  Then as $\varepsilon \rightarrow 0$,  
\begin{equation}
{\lambda_m} \asymp \exp(-\Delta_m / \varepsilon),\quad{\rm where}\quad
\Delta_m  := \mathcal{V} (g^{\ast}_m) - \mathcal{V} ( g^{\ast}_{m+1} ),\quad m = 1, 2, \cdots, n-1.
\end{equation}  
 \end{theorem}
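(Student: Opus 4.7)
The plan is to deduce the individual asymptotics of the $\lambda_m$'s from a combinatorial expansion of the characteristic polynomial of $L$ combined with a careful extraction of individual eigenvalue magnitudes from the resulting symmetric-function identities. I would organize the argument in three phases.

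First, I would invoke the (directed) matrix--tree theorem to write the characteristic polynomial of $L$, after factoring out the known root $\lambda=0$, as
\begin{equation*}
\frac{\det(\lambda I - L)}{\lambda} \;=\; \sum_{m=1}^{n}\Sigma_{m}\,\lambda^{m-1}, \qquad
\Sigma_{m}\;:=\;\sum_{g\in\mathcal{G}(m)}\prod_{(i\to j)\in g}L_{ij},
\end{equation*}
with $\Sigma_{n}=1$. Every $\Sigma_{m}$ is a positive sum of positive terms, and because there are only finitely many W-graphs,
\begin{equation*}
\Sigma_{m}\;\asymp\;\exp\!\bigl(-\mathcal{V}(g^{\ast}_{m})/\varepsilon\bigr)\qquad\text{as }\varepsilon\to 0,
\end{equation*}
the dominant contribution coming from the optimal W-graph $g^{\ast}_{m}$ of Definition \ref{defW}. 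Assumption \ref{A2} guarantees that $\Sigma_{1}>0$, so $\lambda=0$ is a simple eigenvalue and the remaining $\Sigma_m$'s genuinely encode the nonzero spectrum.

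Second, Vieta's formulas applied to the identity above give
\begin{equation*}
\Sigma_{m}\;=\;e_{n-m}(-\zeta_{1},\ldots,-\zeta_{n-1}), \qquad m=1,\ldots,n-1,
\end{equation*}
where $\zeta_{k}=-\lambda_{k}+i\mu_{k}$ and $e_{j}$ denotes the $j$-th elementary symmetric polynomial. Consequently, the ratio of two consecutive $\Sigma$'s has exactly the exponential order that the theorem attributes to $\lambda_{m}$:
\begin{equation*}
\frac{\Sigma_{m}}{\Sigma_{m+1}}\;\asymp\;\exp(-\Delta_{m}/\varepsilon).
\end{equation*}
The remaining task is to transfer this from the ratios of symmetric functions to the individual $\lambda_m$'s. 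The upper bound $\lambda_{m}\lesssim\exp(-\Delta_{m}/\varepsilon)$ I would obtain from $\lambda_{m}\le|\zeta_{m}|$ combined with a lower bound on $\Sigma_{m}$ that isolates the $n-m$ eigenvalues of largest modulus, once the ordering by real parts is shown to be compatible with the ordering by modulus. The matching lower bound $\lambda_{m}\gtrsim\exp(-\Delta_{m}/\varepsilon)$ I would then prove by downward induction on $m$ starting at $m=n-1$, peeling off one eigenvalue at a time using the already-established asymptotics of the larger $\lambda_{k}$'s.

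The main obstacle will be this last transfer step, because the $\Sigma_{m}$'s only see the moduli $|\zeta_{k}|$ while the conclusion is about the real parts $\lambda_{k}$. A priori a complex eigenvalue could have $|\mu_{k}|$ exponentially larger than $\lambda_{k}$, in which case the modulus and the real part would have different exponential orders and the inductive peeling would collapse. To rule this out I would pair the Gershgorin-type bound $|\zeta_{k}|\le 2\max_{i}|L_{ii}|$, which prevents moduli from being exponentially \emph{large}, with a perturbative block-diagonalization of $L$ along the hierarchy of optimal W-graphs: on each timescale the operator restricted to a block behaves like a small perturbation of a matrix whose spectrum can be read off directly, and this should force $|\zeta_{k}|\asymp\lambda_{k}$ at every level. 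It is here, rather than in the matrix--tree identity or in Vieta's formulas, that I expect the real difficulty of the proof to lie.
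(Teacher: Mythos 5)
Your phases 1 and 2 are sound and in fact coincide with the machinery the paper uses in Appendix \ref{App_A}: there the coefficients of the characteristic polynomial are shown, by exploiting the zero row sums and a cancellation argument over graphs with cycles, to equal $C_l=\sum_{g\in\mathcal{G}(n-l)}\Pi(g)$ (your $\Sigma_m$ with $m=n-l$), which is the all-minors matrix--tree identity you invoke; the ratios $C_l/C_{l-1}$ then carry the exponent $\Delta_{n-l}$. Be aware, however, that the paper does \emph{not} prove Theorem \ref{W_asymeigval} itself (the footnote states explicitly that no published proof is known); it proves only the sharper Theorem \ref{GC_asymeigval}, and only under Assumption \ref{A3}, which forces strict inequalities $\Delta_1>\cdots>\Delta_{n-1}$ and (citing Wentzell) real, distinct eigenvalues for small $\varepsilon$, so that the passage from $C_l/C_{l-1}$ to the individual $\lambda_{n-l}$ is immediate. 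Theorem \ref{W_asymeigval} has neither uniqueness of the optimal W-graphs nor realness of the spectrum, which is exactly where your proposal stops being a proof.

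The genuine gap is your third phase. You assume that the ordering by real parts is compatible with the ordering by modulus and that $|\zeta_k|\asymp\lambda_k$, but this is precisely the content that must be established: the symmetric functions $\Sigma_m=e_{n-m}(-\zeta_1,\ldots,-\zeta_{n-1})$ constrain products of eigenvalues, and for complex $\zeta_k$ a lower bound on an individual $\lambda_k=-\mathrm{Re}\,\zeta_k$ does not follow from them (cancellations make lower bounds on elementary symmetric functions in terms of the largest moduli false in general, and a nonzero eigenvalue could a priori have $|\mu_k|$ exponentially larger than $\lambda_k$, which would break your ``peeling'' induction). Your proposed fix --- Gershgorin for an upper bound on moduli plus a ``perturbative block-diagonalization along the hierarchy of optimal W-graphs'' that ``should force'' $|\zeta_k|\asymp\lambda_k$ --- is stated as a hope, not carried out; constructing such a block decomposition when optimal W-graphs are not unique (so that consecutive $\Delta_m$ may coincide and whole groups of eigenvalues share an exponential order, possibly as complex-conjugate pairs) is the hard analytic core of Wentzell's statement, and nothing in your write-up supplies it. In addition, your claimed upper bound $\lambda_m\lesssim e^{-\Delta_m/\varepsilon}$ already leans on the unproven compatibility of the two orderings. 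As it stands, the proposal reproduces the paper's (conditional) combinatorial steps but leaves unproved exactly the part of Theorem \ref{W_asymeigval} that goes beyond Theorem \ref{GC_asymeigval}.
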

 \footnotetext[1]{No proof of Theorem \ref{W_asymeigval} was provided in \cite{Wentzell}. We are not aware of any published proof of this result.}

Theorem \ref{W_asymeigval} implies \cite{Wentzell} that 
{
\begin{equation}
\label{ww}
\Delta_1 \geq \Delta_2 \geq \cdots \geq \Delta_{n-1}>0.
\end{equation}
}

{
Assumption \ref{A3} below allows us to derive a number of important facts.
\begin{assumption} 
\label{A3}
For all $1\le m\le n$, the optimal W-graphs $g^{\ast}_m$  are unique.
\end{assumption} 
It is, in essence, a \emph{genericness assumption}, because 
if the weights $U_{ij}$ are random real numbers in an interval $(0,U_{\max}]$, then it holds with probability one. 
}

{ Under Assumption \ref{A3}, the strict inequalities take place { in Eq. \eqref{ww}}. 
Hence, if $\varepsilon$ is sufficiently small, all eigenvalues are real and distinct \cite{Wentzell}. 
In this case, } for a generator matrix $L$ with off-diagonal entries of the form \eqref{Lij},  
the estimates given by Theorem \ref{W_asymeigval} can be made more precise.
 \begin{theorem}
  \label{GC_asymeigval} 
Suppose a Markov chain with pairwise rates of the form  $\kappa_{ij}\exp(-U_{ij}/\varepsilon)$
satisfies Assumptions \ref{A1}, \ref{A2}, and \ref{A3}. Let $0 < \lambda_1 \le \lambda_2\le\cdots \le \lambda_{n-1} $ 
be eigenvalues of the corresponding matrix $-L$. Then as $\varepsilon \rightarrow 0$, for $1\le m\le n-1$ we have
 \begin{align}
 \lambda_m &= \alpha_m \exp ( - \Delta_m / \varepsilon ) (1 + o(1) ),\quad {\rm where} \label{GC_lambda}\\
\Delta_m  &:= \mathcal{V} (g^{\ast}_m) - \mathcal{V} ( g^{\ast}_{m+1} ),\quad
 \alpha_m : = \frac{\prod_{(i\rightarrow j)\in g^{\ast}_m}\kappa_{ij}}{\prod_{(i\rightarrow j)\in g^{\ast}_{m+1}} \kappa_{ij}}.\notag
 \end{align}
 \end{theorem}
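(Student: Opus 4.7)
The plan is to combine Wentzell's exponential-order estimates (Theorem \ref{W_asymeigval}) with an exact algebraic identity coming from the Markov-chain matrix-tree theorem, and then use Assumption \ref{A3} to isolate both the exponential rate and the prefactor. First, I would expand the characteristic polynomial of $L$: since $0$ is an eigenvalue,
$$\det(zI - L) \;=\; z\prod_{m=1}^{n-1}(z+\lambda_m) \;=\; \sum_{m=0}^{n} z^{m}\,e_{n-m}(\lambda_1,\dots,\lambda_{n-1}),$$
with $e_0=1$ and $e_n=0$. The all-minors matrix-tree theorem for generator matrices, which can be verified by a Cauchy--Binet expansion of $zI-L$ (or by direct combinatorial bookkeeping, as in the $n=2$ sanity check $\det(zI-L)=z^2+(L_{12}+L_{21})z$), identifies the coefficient of $z^m$ on the left with a weighted sum over W-graphs:
$$e_{n-m}(\lambda_1,\dots,\lambda_{n-1}) \;=\; \sum_{g\in\mathcal{G}(m)}\prod_{(i\to j)\in g} L_{ij}, \qquad 1\le m\le n.$$
Substituting $L_{ij}=\kappa_{ij}e^{-U_{ij}/\varepsilon}$ and setting $K_m:=\prod_{(i\to j)\in g^{\ast}_m}\kappa_{ij}$, the uniqueness of the minimizer $g^{\ast}_m$ (Assumption \ref{A3}) gives
$$e_{n-m}(\lambda) \;=\; K_m\,e^{-\mathcal{V}(g^{\ast}_m)/\varepsilon}\bigl(1+o(1)\bigr),$$
because every competing $g\in\mathcal{G}(m)$ is exponentially suppressed relative to $g^{\ast}_m$.

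The second half of the argument converts these symmetric-function identities into estimates for individual $\lambda_m$. Under Assumption \ref{A3} the inequalities in \eqref{ww} are strict, so Theorem \ref{W_asymeigval} yields $\lambda_m/\lambda_{m+1}\to 0$ exponentially for each $m$. Since $e_{n-m}(\lambda)$ sums over all $(n-m)$-element subsets of $\{\lambda_1,\dots,\lambda_{n-1}\}$, and the subset of the $n-m$ largest eigenvalues $\{\lambda_m,\dots,\lambda_{n-1}\}$ dominates every other by at least one exponential gap $\lambda_j/\lambda_k$ with $j<m\le k$, I obtain $e_{n-m}(\lambda)=\prod_{k=m}^{n-1}\lambda_k\,(1+o(1))$. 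Dividing the identity for $m$ by the identity for $m+1$ (with the convention that $g^{\ast}_n$ is the arcless graph, so $K_n=1$ and $\mathcal{V}(g^{\ast}_n)=0$, making the $m=n-1$ case work) cancels all factors $\lambda_k$ with $k>m$ and produces
$$\lambda_m \;=\; \frac{K_m}{K_{m+1}}\,\exp\!\bigl(-(\mathcal{V}(g^{\ast}_m)-\mathcal{V}(g^{\ast}_{m+1}))/\varepsilon\bigr)\bigl(1+o(1)\bigr) \;=\; \alpha_m\,e^{-\Delta_m/\varepsilon}\bigl(1+o(1)\bigr),$$
as required.

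The hardest part, in my view, is upgrading Wentzell's logarithmic-order estimate into the multiplicative $(1+o(1))$ needed for the dominance step $e_{n-m}(\lambda)\sim\prod_{k=m}^{n-1}\lambda_k$. This is exactly where Assumption \ref{A3} is indispensable: without strict inequalities $\Delta_m>\Delta_{m+1}$, several eigenvalues could share the same exponential scale and no single monomial in $e_{n-m}$ would dominate, so both the ratio argument and the extraction of $\alpha_m$ from $K_m/K_{m+1}$ would collapse. The matrix-tree identity and the extraction of the optimal W-graph are structurally clean steps once Assumption \ref{A3} is in force; the technical care goes into propagating the $(1+o(1))$ errors uniformly through the ratios and products.
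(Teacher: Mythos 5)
Your proposal is correct and follows essentially the same route as the paper's proof in Appendix \ref{App_A}: identify the coefficients of the characteristic polynomial of $-L$ with sums of $\prod_{(i\to j)\in g}L_{ij}$ over W-graphs, use Assumption \ref{A3} to let the unique optimal W-graph dominate each sum, use the strict gaps from Theorem \ref{W_asymeigval} to let the product of the largest eigenvalues dominate each elementary symmetric polynomial, and extract $\lambda_m$ from the ratio of consecutive coefficients. The only difference is that you cite the all-minors matrix-tree/forest expansion as known, whereas the paper proves that identity directly (its Steps 1--2, via the Leibniz formula and a sign-cancellation argument over graphs with cycles).
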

 Originally, we formulated Theorem \ref{GC_asymeigval} in \cite{CG} but did not provide a detailed proof in order to stay focused on practical matters.
Here, we provide a proof of Theorem \ref{GC_asymeigval} in Appendix \ref{App_A} as promised in \cite{CG}. 
The key point in this proof is the relationship between the coefficients of the characteristic polynomial of  $-L$ 
and the W-graphs with the corresponding numbers of sinks.  
%
%

\subsection{Weak nested property of optimal W-graphs} 
\label{sec:nested}
In \cite{CG}, under Assumption \ref{A3}, we proved a weak nested property of optimal W-graphs,
a pivotal property that  serves as a stepping stone in the design of the single sweep algorithm \cite{CG} and Algorithm 1, and 
facilitates a deeper understanding of the metastable behavior of continuous-time Markov chains.
The weak nested property can be compared to a stronger nested property of optimal W-graphs
taking place if the underlying Markov chain is time-reversible \cite{Cameron}.

{A connected component $S$ of a W-graph $g$ will be identified with its set of vertices.
$\mathcal{A}(g; S)$ will denote the set of arcs of  $g$ with tails in $S$.
}
\begin{theorem} 
\label{nested_thm}
{\bf (The weak nested property of optimal W-graphs \cite{CG})} 
Suppose that Assumptions \ref{A1}, \ref{A2}, and \ref{A3} hold.
Then the following statements are true.
\begin{enumerate} [(i)]
\item
There exists a unique connected component  $S_m$ of $g^{\ast}_{m+1}$, whose sink $s^{\ast}_m$ is not a sink of $g^{\ast}_m$.
\item
$\mathcal{A} (g^{\ast}_m; \mathcal{S} \setminus S_m ) = \mathcal{A} (g^{\ast}_{m+1}; \mathcal{S} \setminus S_m )$, 
i.e. the arcs in $g^{\ast}_{m}$ with tails in $\mathcal{S} \setminus  S_m$ coincide with those in $g^{\ast}_{m+1}$.
However, $\mathcal{A} ( g^{\ast}_m; S_m \setminus \{s^{\ast}_m \} )$ and $\mathcal{A} (g^{\ast}_{m+1}; S_m \setminus \{s^{\ast}_m\} )$ do not necessarily coincide.
\item
There exists a single arc $(p^{\ast}_m \rightarrow q^{\ast}_m) \in g^{\ast}_m$ with tail  $p^{\ast}_m$ in $S_m$ 
and head $q^{\ast}_m$ in another connected component  $Z_m$ with sink $z^{\ast}_m$ of $g^{\ast}_{m+1}$. 
\end{enumerate}
\end{theorem}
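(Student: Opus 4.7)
The plan is to use Assumption \ref{A3} (uniqueness of optimal W-graphs) together with two elementary local-exchange inequalities. If $(u\to v)\in g^{\ast}_m$, then removing it yields a W-graph with $m+1$ sinks of weight $\mathcal{V}(g^{\ast}_m)-U_{uv}$, so optimality of $g^{\ast}_{m+1}$ forces $U_{uv}\le\Delta_m$. Conversely, if $s$ is a sink of $g^{\ast}_{m+1}$ and $v$ lies in a different connected component of $g^{\ast}_{m+1}$, then $g^{\ast}_{m+1}\cup\{(s\to v)\}$ is an acyclic W-graph with $m$ sinks, and optimality of $g^{\ast}_m$ forces $U_{sv}\ge\Delta_m$. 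The structural claims (i)--(iii) will be obtained by combining these inequalities with the uniqueness provided by Assumption \ref{A3}, which upgrades any equality case into an identification of W-graphs rather than merely a weight bound.

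For (i), let $A=\mathrm{sinks}(g^{\ast}_m)\setminus\mathrm{sinks}(g^{\ast}_{m+1})$ and $B=\mathrm{sinks}(g^{\ast}_{m+1})\setminus\mathrm{sinks}(g^{\ast}_m)$; since $g^{\ast}_m$ has $m$ sinks and $g^{\ast}_{m+1}$ has $m+1$ sinks, $|B|-|A|=1$. I would show $A=\emptyset$ by contradiction: if some $s\in A$ existed, the outgoing arc $(s\to t)$ of $s$ in $g^{\ast}_{m+1}$ together with the outgoing arc in $g^{\ast}_m$ of an element $s'\in B$ can be combined to build an alternative W-graph with the same sink count and weight as one of the two optima but distinct from it, violating Assumption \ref{A3}. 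Hence $|A|=0$ and $|B|=1$, and one defines $s^{\ast}_m$ as the unique element of $B$ and $S_m$ as its connected component in $g^{\ast}_{m+1}$.

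For (iii), by (i) no vertex of $S_m$ is a sink of $g^{\ast}_m$, so every vertex of $S_m$ has out-degree one in $g^{\ast}_m$; acyclicity then forces the arcs of $g^{\ast}_m$ with both endpoints in $S_m$ to form an in-forest with $k\ge 1$ roots, where $k$ also counts the $g^{\ast}_m$-arcs leaving $S_m$. To rule out $k\ge 2$, I would suppose $k\ge 2$ and use the in-tree structure of $g^{\ast}_{m+1}|_{S_m}$ rooted at $s^{\ast}_m$ to reroute all but one outgoing arc through the interior of $S_m$, obtaining a competing W-graph with $m$ sinks whose total weight is at most $\mathcal{V}(g^{\ast}_m)$; uniqueness then forces $k=1$, identifying $p^{\ast}_m$, $q^{\ast}_m$, and the receiving component $Z_m$ with its sink $z^{\ast}_m$. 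Part (ii) follows similarly: once $S_m$ is identified and $k=1$ is known, the arcs of $g^{\ast}_m$ and $g^{\ast}_{m+1}$ with tails in $\mathcal{S}\setminus S_m$ are both optimal in-forests on $\mathcal{S}\setminus S_m$ with identical sink sets $\mathrm{sinks}(g^{\ast}_{m+1})\setminus\{s^{\ast}_m\}$, and a direct exchange argument combined with Assumption \ref{A3} identifies them. The main obstacle is the $k=1$ step: rerouting the in-forest of $g^{\ast}_m|_{S_m}$ through the in-tree of $g^{\ast}_{m+1}|_{S_m}$ must preserve acyclicity and the sink count, which requires a careful analysis of how reversing arc orientations along paths to $s^{\ast}_m$ stitches onto the rest of $g^{\ast}_m$ without producing a directed cycle.
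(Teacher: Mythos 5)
The paper itself does not reprove Theorem \ref{nested_thm} (it is quoted from \cite{CG}), so your attempt can only be judged on its own terms; on those terms the skeleton is right (local exchanges plus uniqueness from Assumption \ref{A3}) but the execution has a genuine gap: all of your exchanges are single-arc moves, and in exactly the situations the ``weak'' nested property is designed for, single-arc moves do not produce W-graphs. For (i), the hybrids $g^{\ast}_m\cup\{(s\to t)\}\setminus\{(s'\to t')\}$ and $g^{\ast}_{m+1}\cup\{(s'\to t')\}\setminus\{(s\to t)\}$ that your sketch implicitly relies on can contain directed cycles whenever $t'$ lies in the $g^{\ast}_{m+1}$-component of $s'$; and your second inequality $U_{s'v}\ge\Delta_m$ applies only when the head lies in a \emph{different} component, so the case in which every $s'\in B$ sends its $g^{\ast}_m$-arc back into its own component --- precisely the case in which the strong nested property fails and only the weak one survives --- is left untreated. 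For (iii) you flag the $k=1$ step yourself as unresolved, and the repair you gesture at is not available: ``reversing arc orientations along paths to $s^{\ast}_m$'' is illegitimate here (the chain is not reversible, the reversed arcs need not exist in $\mathcal{A}$, and $U_{ij}\ne U_{ji}$), while replacing exit arcs one at a time by the corresponding $g^{\ast}_{m+1}$-arcs neither preserves acyclicity nor yields the claimed bound $\le\mathcal{V}(g^{\ast}_m)$, since there is no arc-by-arc weight comparison between the two graphs.

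The missing idea is a \emph{block} swap paired with a sum identity. Take $X$ to be a union of connected components of $g^{\ast}_{m+1}$ (for (i) and (ii)), or the vertex set of an in-tree of $g^{\ast}_m$ restricted to $S_m$ not containing $s^{\ast}_m$ (for (iii)), and exchange \emph{all} arcs with tails in $X$ between $g^{\ast}_m$ and $g^{\ast}_{m+1}$ at once. Acyclicity of both hybrids then follows structurally: arcs of $g^{\ast}_{m+1}$ never leave their components, arcs of $g^{\ast}_m$ inside $S_m$ never cross between its in-trees, and an exit arc leaves $S_m$ for good; moreover the two hybrid weights sum exactly to $\mathcal{V}(g^{\ast}_m)+\mathcal{V}(g^{\ast}_{m+1})$, so the two optimality inequalities are forced into equalities and Assumption \ref{A3} identifies the hybrids with $g^{\ast}_m$ and $g^{\ast}_{m+1}$; reading off that identification gives (i), (ii), and the uniqueness of the exit arc in (iii). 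This device also repairs your argument for (ii): you cannot simply speak of two ``optimal in-forests on $\mathcal{S}\setminus S_m$ with prescribed sinks,'' because Assumption \ref{A3} asserts uniqueness only of the global optimal W-graphs, and because the possibility that $g^{\ast}_m$ has arcs with tails in $\mathcal{S}\setminus S_m$ and heads inside $S_m$ is part of what must be excluded --- both points are settled by the component-level swap, not by a single-arc exchange.
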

Theorem \ref{nested_thm} is illustrated in Fig. \ref{nested_fig}. 
The optimal W-graph $g^{\ast}_m$ is obtained from $g^{\ast}_{m+1}$ by $(i)$ adding an arc $p^{\ast}_m\rightarrow q^{\ast}_m$
with tail $p^{\ast}_m$ and head $q^{\ast}_m$ lying in different connected components of $g^{\ast}_{m+1}$, denoted by  $S_m$ and $Z_m$ respectively,  and $(ii)$
possibly rearranging some arcs with tails and heads in $S_m$.
We say that $S_m$ is \emph{absorbed} by $Z_m$ to form $g^{\ast}_m$. 
All arcs of $g^{\ast}_{m+1}$ with tails not in $S_m$
 are inherited by $g^{\ast}_m$.

\begin{figure}[htbp]
\begin{center}
\includegraphics[width=\textwidth]{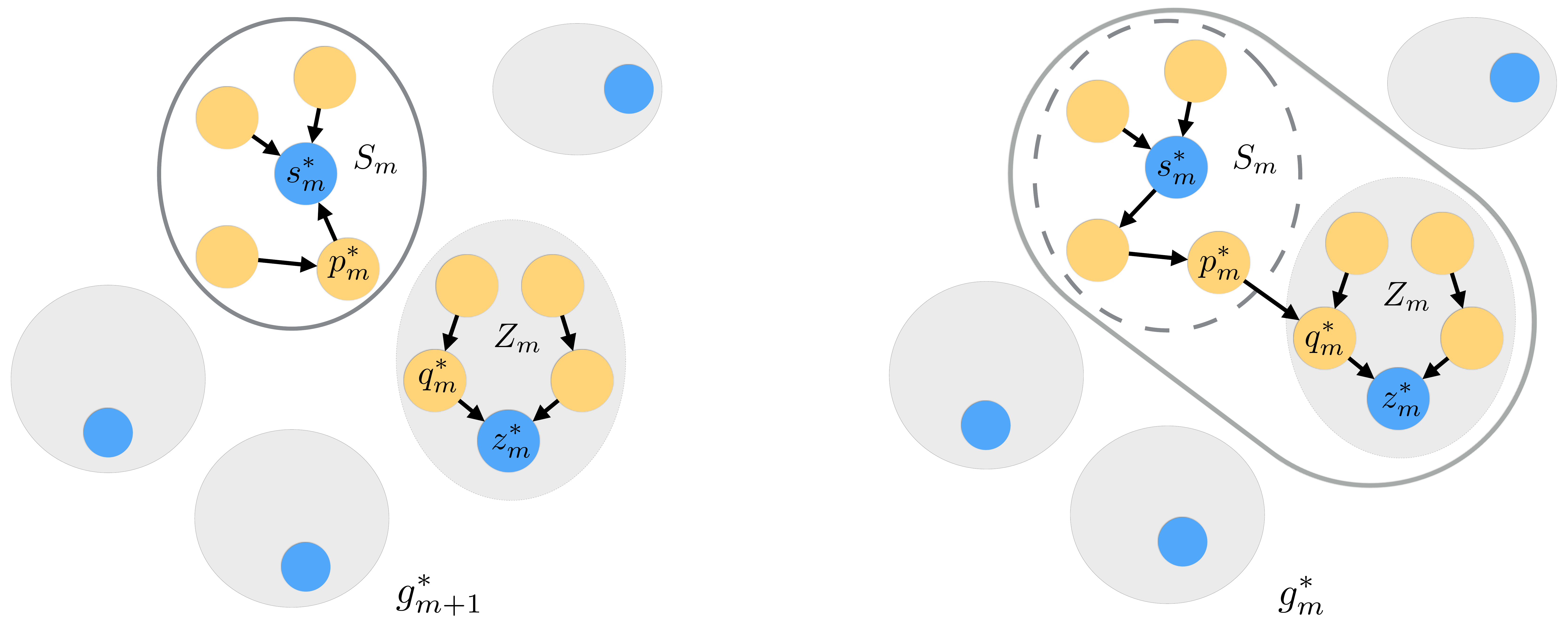}
\caption{An illustration  for the weak nested property of optimal W-graphs (Theorem \ref{nested_thm}).
The ovals symbolize connected components of the optimal W-graph $g^{\ast}_{m+1}$.
Blue  and yellow vertices represent sinks and non-sinks respectively. 
}
\label{nested_fig}
\end{center}
\end{figure}


{
\section{Freidlin's hierarchies, the critical timescales, and the T-graphs}
\label{sec:Tgraphs}
In this Section, we  introduce the \emph{Typical Transition Graphs} or \emph{T-graphs} for 
continuous-time Markov chains with pairwise rates of the form of Eq. \eqref{Lij} or \eqref{Lij1}. 
Originally, the T-graphs arose as a byproduct of the single-sweep algorithms \cite{CG}.
In this work, we connect the T-graphs 
to Freidlin's construction of the hierarchy of cycles/Markov chains 
\cite{freidlin-cycles,freidlin-physicad,FW,freidlin_symmetry}. 

A timescale is a function of $\epsilon$ defined up to the exponential order: we  say that $t(\varepsilon)$
is logarithmically equivalent to $e^{\theta/\varepsilon}$ and write 
$$
t(\varepsilon)\asymp e^{\theta/\varepsilon}~~~{\rm if}~~~
\lim_{\varepsilon\rightarrow 0}\varepsilon\log(t(\varepsilon)) = \theta.
$$
For brevity, we write 
$$
t > e^{\theta_1/\varepsilon}~~~\text{ implying that }~~~
\lim_{\varepsilon\rightarrow 0}\varepsilon\log(t(\varepsilon)) > \theta_1,
$$
and adopt analogous meanings for all other inequality signs for comparing timescales.

For every vertex $i\in\mathcal{S}$, let  $U_{\min}(i):=\min_{j\in\mathcal{S}}U_{ij}$.
The  outgoing arcs from $i$ of the minimal weight $U_{\min}(i)$ will be called \emph{min-arcs} from $i$,
and the set of all min-arcs from $i$ will be denoted by $\mathcal{A}_{\min}(i)$.
If a Markov jump process starts at state $i$, then the probability of the first jump along an arc $i\rightarrow j$
is given by
\begin{equation}
\label{t1}
\mathbb{P}_i(i\rightarrow j) = \frac{L_{ij}}{\sum_{l\neq i}L_{il}} =  \frac{\kappa_{ij}e^{-U_{ij}/\varepsilon}}{\sum_{l\neq i}\kappa_{il}e^{-U_{il}/\varepsilon}}
= \left[\frac{\kappa_{ij}}{\sum_{(i\rightarrow l)\in\mathcal{A}_{\min}(i)}\kappa_{il}}\right]
e^{-(U_{ij}-U_{\min}(i))/\varepsilon}(1 + o(1)).
\end{equation}
Eq. \eqref{t1} shows that $\mathbb{P}_i(i\rightarrow j)$ remains positive as $\varepsilon\rightarrow 0$ if an only if the arc $i\rightarrow j$ is a min-arc from $i$.
The expected exit time from $i$ is logarithmically equivalent to $e^{U_{\min}(i)/\varepsilon}$.

For simplicity of the presentation below, we adopt
\begin{assumption}
\label{A4}
The Markov chain associated with the graph $G(\mathcal{S},\mathcal{A},\mathcal{U})$ is irreducible.
\end{assumption}

Freidlin's construction of the hierarchy of cycles/Markov chains \cite{freidlin-cycles,freidlin-physicad,freidlin_symmetry}  
can be outlined as follows (with minor terminological modifications). 
We begin with the graph $G_0(\mathcal{S},\emptyset,\emptyset)$
with the set of states $\mathcal{S}$ and no arcs. All vertices are called the zeroth order Markov chains.
Then all min-arcs from all vertices are added to $G_0$ resulting in the graph 
$$
G_1(\mathcal{S},\mathcal{A}_1,\mathcal{U}_1),~~{\rm where}~~ 
\mathcal{A}_1:=\bigcup_{i\in\mathcal{S}}\mathcal{A}_{\min}(i),\quad  \mathcal{U}_1 = \{U_{ij}~|~(i\rightarrow j)\in\mathcal{A}_1\}.
$$ 
All nontrivial closed communicating classes in $G_1$
(i.e., consisting of more than one state), are called the first  order cycles/Markov chains.
The birth timescale of each first order cycle/Markov chain (the reciprocal of its rotations rate)
is the maximal expected holding time among its vertices. 
If there is only one first order cycle/Markov chain, and it contains the whole set of states $\mathcal{S}$, the construction is fulfilled.
Otherwise, each first order cycle/Markov chain is treated as a macro-state. For each macro-state, 
the exit rate and the set of exit arcs along which the process
escapes from it with probability that remains positive as $\varepsilon\rightarrow 0$ are found. 
These exit arcs are the min-arcs from the macro-states. Their weights are modified to set them equal to
the exponential factors of their exit rates.  
Contracting the macro-states into single super-vertices,
one obtains the graph $G_1^{(1)}(\mathcal{S}^{(1)},\mathcal{A}^{(1)}_1,\mathcal{U}^{(1)}_1)$ which is a directed forest.
The graph $G_2^{(1)}(\mathcal{S}^{(1)},\mathcal{A}^{(1)}_2,\mathcal{U}^{(1)}_2)$ is obtained from $G_1^{(1)}$ 
by adding all min-arcs from the macro-states with their modified weights. 
Then  one checks for nontrivial closed communicating classes in $G^{(1)}_2$. 
If there is a unique closed communicating class comprising the whole set of states $\mathcal{S}^{(1)}$, the process terminates.
Otherwise, it is continued in the same manner as it was done for the graph $G_1(\mathcal{S},\mathcal{A}_1,\mathcal{U}_1)$.
This recursive procedure will terminate in a finite number of steps $Q$ with a graph $G_Q(\mathcal{S}^{(Q-1)},\mathcal{A}^{(Q-1)}_Q,\mathcal{U}^{(Q-1)}_Q)$. 
It  produces  the hierarchy of Freidlin's cycles/Markov chains of orders $q=0, 1, 2, \ldots, Q$.
The one of order $Q$ comprises the whole set of states $\mathcal{S}^{(Q-1)}$. Recursively restoring all contracted 
closed communicating classes while keeping the  modified arc weights, one obtains the graph 
$G^{\ast}(\mathcal{S},\mathcal{A}^{\ast},\mathcal{U}^{\ast}):=G_Q(\mathcal{S},\mathcal{A}_Q,\mathcal{U}_Q)$.
We will call the transitions corresponding to arcs $(i\rightarrow j)$ of the graph $G^{\ast}(\mathcal{S},\mathcal{A}^{\ast},\mathcal{U}^{\ast})$ \emph{typical transitions} 
and associate them with the  corresponding associated timescales $e^{U_{ij}^{\ast}/\varepsilon}$.

We remark that the birth timescales and exit timescales of the cycles/Markov chains 
 are only partially ordered in the sense that if $C_b$ is a closed communicating class in some $G_r(\mathcal{S}^{(r-1)},\mathcal{A}_r^{(r-1)})$
 containing a super-vertex comprising a closed communicating class $C_a$ that the birth timescale of $C_b$ is greater than the one of $C_a$, and the holding time 
 in $C_b$ is greater than the one in $C_a$. However, if $C_x$ and $C_y$ are closed communicating classes in $G_q(\mathcal{S}^{(q-1)},\mathcal{A}_q^{(q-1)})$
 and $G_r(\mathcal{S}^{(r-1)},\mathcal{A}_r^{(r-1)})$ respectively, where $q < r$, it is possible that the birth and exit timescales of $C_x$ are greater than those of $C_y$.
 Furthermore, the order in which arcs are added to $G_0$ to form  eventually $G_Q$ is, in general, neither increasing nor decreasing order 
 of their modified or original weights.

In this work, we propose an alternative construction that builds the graph $G^{\ast}(\mathcal{S},\mathcal{A}^{\ast},\mathcal{U}^{\ast})$ 
from $G_0(\mathcal{S},\emptyset,\emptyset)$
by adding arcs to it \emph{ in the increasing order of their modified weights}. 
In our construction,  the procedure of finding the exit rates from the closed communicating classes is simpler than that in Freidlin's one 
due to the order in which the arcs are added.

\begin{definition}{\bf (Critical exponents and T-graphs)}
\label{def:Tgraph}
Let $G^{\ast}(\mathcal{S},\mathcal{A}^{\ast},\mathcal{U}^{\ast})$ be the graph obtained as a result of the construction of the hierarchy of Freidlin's cycles/Markov chains
as described above. The ordered subset of $\mathcal{U}^{\ast}$ defined by
\begin{equation}
\label{critical}
\gamma_1\le\ldots\le\gamma_K := \bigcup_{q=0}^{Q-1}\{ U_{\min}^{(q)}(i)~|~ i\in \mathcal{S}^{(q)} \}
\end{equation}
and is the set of critical exponents.  
The corresponding timescales $e^{\gamma_1/\varepsilon}$, ..., $e^{\gamma_K/\varepsilon}$ are the critical timescales.

Let $\theta_1<\ldots<\theta_P$ be the ordered set of distinct numbers in $\{\gamma_k\}_{k=1}^{K}$. 
The Typical Transition Graph or T-graph $T_p$ is the subgraph of 
$G^{\ast}(\mathcal{S},\mathcal{A}^{\ast},\mathcal{U}^{\ast})$ containing all arcs of 
weights up to $\theta_p$, i.e., $T_p=T_p(\mathcal{S},\mathcal{A}(T_p),\mathcal{U}(T_p))$ where 
\begin{equation}
\label{tg1}
\mathcal{A}(T_p) = \{i\rightarrow j\in \mathcal{A}^{\ast}~|~U^{\ast}_{ij}\le\theta_p\}, \quad
\mathcal{U}(T_p) = \{U^{\ast}_{ij}\in \mathcal{U}^{\ast}~|~U^{\ast}_{ij}\le\theta_p\},\quad p = 0,1,\ldots,P.
\end{equation}
The T-graph $T_p$ is associated with the range of timescales $t\in[e^{\gamma_p/\varepsilon},e^{\theta_{p+1}/\varepsilon})$ for $p = 1,\ldots,P-1$.
The T-graphs $T_0$ and $T_P$ are associated with the ranges of timescales $t\in[0,e^{\theta_1/\varepsilon})$ and $t\in[e^{\theta_P/\varepsilon},\infty)$.
\end{definition}

Algorithm 1 in Section \ref{sec:alg1} builds the hierarchy of the critical exponents and the T-graphs in the case where 
all critical exponents $\gamma_k$, $k=1,\ldots,K$, are
distinct and the min-arcs from each vertex and each closed communicating class are unique. In this case, $K=P$ and $\gamma_k=\theta_k$, $1\le k\le K$,
and all closed communicating classes in all graphs $G^{(q-1)}_q(\mathcal{S}^{(q-1)},\mathcal{A}^{(q-1)}_q,\mathcal{U}^{(q-1)}_q)$, $1\le q\le Q$,
are simple directed cycles. 
Each of these cycles has the unique main state, where the system is found with probability close to one for small enough $\varepsilon$ provided that it is in the cycle,
and the unique exit arc, via which the system escapes from the cycle with probability close to one for small enough $\varepsilon$. Furthermore, the exit rates from
the all cycles of all orders are distinct.
We refer to such a case as \emph{a case with no symmetry}.
In this case, one can subdivide the set of the critical exponents $\gamma_k$, $1\le k\le K$, into two subsets associated
with the set of nonzero eigenvalues and the births of cycles respectively. 
Besides, if the pairwise rates $L_{ij}$ are of the form $L_{ij}=\kappa_{ij}e^{-U_{ij}/\varepsilon}$,
one can obtain the sharp estimates for the eigenvalues from the output of Algorithm 1. 
Finally, the unique hierarchy of the optimal W-graphs can be easily extracted from the found T-graphs.

Algorithm 2 presented in Section \ref{sec:alg2} is designed to handle the case with symmetry, 
i.e., the one where at least two critical exponents $\gamma_k$
coincide or at least one vertex or a closed communicating class has a non-unique min-arc. 
It computes the set of numbers $\theta_p$, $1\le p\le P$, and the hierarchy of the T-graphs $T_p$. 

Algorithm 1 is design to handle the case with no symmetry. However, it might be impossible to determine the presence of symmetry
by the input data and output of Algorithm 1. This can happen if one of the closed communicating classes has more that one min-arc.
Hence the symmetry test should be made at each step of the while-cycle in the code.
Suppose the code implementing Algorithm 1 detects some symmetry while running, but continues running and terminates normally.
In this case, its output will contain a correct set of distinct values of the critical exponents, while the set of graphs will not be 
the correct set of the T-graphs. Nevertheless, one can still extract some important facts about the  true T-graphs from its output.
Section \ref{sec:sym1} contains a discussion and a theorem on this subject. 
}

\section{Algorithm 1 for the study of the metastable behavior}  
\label{sec:alg1}
{
In this Section, we design Algorithm 1
that (a) finds the set of critical exponents and (b) builds the hierarchy of T-graphs.
Throughout this Section, we adopt Assumption \ref{A1}-\ref{A4} and 
\begin{assumption}
\label{A5}
All min-arcs are unique at every stage of Algorithm 1 presented below, and all critical exponents are distinct.
\end{assumption}
The while-loop in Algorithm 1 is essentially the same as the one in the ``single-sweep algorithm"
introduced in \cite{CG} for finding asymptotic estimates for eigenvalues and the hierarchy of optimal W-graphs.  
However,  Algorithm 1 has a different stopping criterion and used for a broader, 
more ambitious purpose. More output data are extracted. 
Furthermore, the presentations of these algorithms are very different.
Here, the description of Algorithm 1 serves the purpose of understanding the metastable behavior of the Markov process.
Its recursive structure and {\tt Contract}
and {\tt Expand} operators reflect the processes taking place on various timescales. 
On the contrary, the single-sweep algorithm was presented in \cite{CG} as a recipe for programming,
where
the key procedures were manipulations
with various sets of outgoing arcs. It was not recursive and 
involved no {\tt Contract} and {\tt Expand} operators.
}


\subsection{The recursive structure of the algorithm} 
\label{sec:code1}
{
Assumption \ref{A5} implies that a Markov process starting at a vertex $i$ 
leaves $i$ via the unique min-arc from $i$ denoted by $\mu(i)$ with probability approaching one as $\varepsilon\rightarrow 0$. 
For every vertex $i\in\mathcal{S}$, we select the unique min-arc $\mu(i)$,
sort the set of min-arcs $\{\mu(i)~|~i\in\mathcal{S}\}$
according to their weights $U_{\mu(i)}\equiv U_{\min}(i)$ in the increasing order,
and call the sorted set the bucket $\mathcal{B}$.

We start with the bucket $\mathcal{B}$
containing $n$ arcs and a graph $T =T_0= T(0)$ containing all vertices of $G$ and no arcs.
At each step of the main cycle of Algorithm 1,  
the minimum weight arc in the bucket $\mathcal{B}$  is transferred to the graph $T$.

At step one, the arc $\mu_1$ of the minimum weight in the whole 
graph $G$ is transferred to $T$. 
Set $\gamma_1 = U_{\mu_1}$ and $T_1=T$.
The graph $T_1$  coincides with 
the optimal W-graph $g^{\ast}_{n-1}$ with $n-1$ sinks.
Theorem \ref{GC_asymeigval} implies that the  eigenvalue $\lambda_{n-1}$ is approximated by
$
\lambda_{n-1}\approx\kappa_{\mu_1}e^{-U_{\mu_1}/\varepsilon} = \kappa_{\mu_1}e^{-\gamma_{1}/\varepsilon}.
$
Hence, $\alpha_{n-1} = \kappa_{\mu_1}$ and $\Delta_{n-1} = \gamma_1=U_{\mu_1}$.

At step two, we transfer the next arc $\mu_2$ from the bucket $\mathcal{B}$ to the graph $T$.
Set  $U_{\mu_2} = :\gamma_2$ and $T_2 = T$.
If  $T_2$ contains no cycles, 
the optimal W-graph $g^{\ast}_{n-2}$ with $n-2$ sinks coincides with $T_2$,  
the eigenvalue $\lambda_{n-2}\approx\kappa_{\mu_2}\exp(-U_{\mu_2}/\varepsilon)$, and $\Delta_{n-2}=\gamma_2=U_{\mu_2}$,  $\alpha_{n-2}=\kappa_{\mu_2}$. 

We continue transferring arcs from $\mathcal{B}$ to $T$ in this manner 
until the addition of a new arc creates a cycle in the graph $T$.
This will happen sooner or later because, if all $n$ arcs in $\mathcal{B}$ are transferred to the graph $T$ with $n$ vertices, 
a cycle must appear in $T$ (see Lemma \ref{lemma:m2} in Section \ref{sec:sym1}).

Suppose a cycle $c$ is formed after the arc $\mu_{last}$ of weight $\gamma_{last}$ was added to the graph $T$.  
Abusing notations, we treat $c$ as a graph or its set of vertices depending on the context. 
We need to identify the exit arc  from $c$, i.e., the arc $i\rightarrow j$ with $i\in c$ and  $j\notin c$
such that the probability to exit  $c$ via $i\rightarrow j$ tends to one as $\varepsilon\rightarrow 0$. 
We discard all arcs with tails and heads both in  $c$ and 
modify the weights and pre-factors of all remaining outgoing arcs from $c$ according to the following rule:
\begin{equation}
\label{update_rules}
\boxed{
U_{ij}^{new} = U_{ij}  + \gamma_{last} - U_{\mu(i)} ,\quad \kappa_{ij}^{new}=\frac{\kappa_{ij}\kappa_{\mu_{last}}}{\kappa_{\mu(i)}}.
}
\end{equation}
Note that none of the arcs with modified weight is in the bucket $\mathcal{B}$ at this moment, 
and all min-arcs with tails in $c$ have been already 
removed from $\mathcal{B}$ and added to the graph $T$.
The update rule is of crucial importance for Algorithm 1. It is consistent with the one in  \cite{freidlin-physicad}.
A  simple explanation for it is provided in Section \ref{sec:update_rule} below. 
The number $U^{new}$ shows what would be the total weight of the graph obtained from the last found
optimal W-graph by replacing the arc $\mu(i)$ with $i\rightarrow j$ and adding the arc $\mu_{last}$ (see \cite{CG} for more details). 
Then we contract the cycle $c$ to a single vertex  (a super-vertex)  $v_{c}$.
Finally,  among the arcs with tails in $v_{c}$, we find an arc with minimal modified weight, denote it by $\mu(v_{c})$, and add it to 
the bucket $\mathcal{B}$. By Assumption \ref{A5}, such an arc is unique. 
The idea of such a contraction is borrowed from the Chu-Liu/Edmonds algorithm \cite{chu-liu,Edmonds} 
for finding the optimal branching when the root is given a priori.
}

We continue adding arcs and contacting cycles in this manner.
Note that the indices $k$ of the numbers $\gamma_k$ are equal to the number of steps or, equivalently,
to the arcs removed from the bucket $\mathcal{B}$ and added to the graph T-graph  $T^{(r)}$ where $r$ is the current recursion level, 
i.e., the number of cycles contracted into super-vertices.
{ All arc additions not leading to cycles are associated with eigenvalues.}
The indices of the numbers $\Delta_m$ and $\alpha_m$ at step $k$ are $m=n-k+r$.

The main cycle terminates as the bucket $\mathcal{B}$ becomes empty.
This stopping criterion allows us to obtain the whole collection of the critical timescales and the whole hierarchy of T-graphs.
Suppose the main cycle terminates at the recursion level $R>1$. Then Algorithm 1 returns to the previous recursion level $r-1$
and expands super-vertex $v_{c_{R}}$ back into the cycle $c_{R}$. Then, if $R-1>1$, Algorithm 1 returns to the
recursion level $R-2$ 
and expands super-vertex $v_{c_{R-1}}$ back into the cycle $c_{R-1}$.
And so on, until recursion level zero is reached.
After that, one can extract the optimal W-graphs out of the appropriate T-graphs..
 
Below, Algorithm 1 is given in the form of a pseudocode. The operators {\tt Contract} and {\tt Expand} 
are described in Section \ref{sec:contract_expand} below.

\begin{algorithm}\\
\label{algorithm1}
{\bf Initialization:} Set step counter $k=0$, cycle or recursion depth counter $r=0$, and eigenvalue index $m=n$. Prepare the bucket 
$\mathcal{B}$, i.e., for every state $i\in \mathcal{S}$, find the min-arc $\mu(i)$, and then sort the set $\{\mu(i),i\in\mathcal{S}\}$
according to the arc weights $U_{\mu(i)}$ in the increasing order:
$$
U_{\mu_1}<U_{\mu_2}<\ldots<U_{\mu_n}.
$$
The graph $G^{(0)}(\mathcal{S}^{(0)},\mathcal{A}^{(0)},\mathcal{U}^{(0)})$ is original graph $G(\mathcal{S},\mathcal{A},\mathcal{U})$.\\
Set the graph $T=T(\mathcal{S}^{(0)},\emptyset,\emptyset)$. Set $T_0=T$.\\

{\bf The main body of the Algorithm:} Call the function {\tt FindTgraphs} with arguments  $k = 0$, $r = 0$, $G^{(0)}(\mathcal{S}^{(0)},\mathcal{A}^{(0)},\mathcal{U}^{(0)})$,
$T(\mathcal{S}^{(0)},\emptyset,\emptyset)$, and $\mathcal{B}$.

{\bf Function} {\tt FindTgraphs}$\left(r,k,G^{(r)},T^{(r)},\mathcal{B}\right)$\\
{\tt while} $\{$ $\mathcal{B}$ is not empty {\tt and} $T^{(r)}$ has no cycles $\}$\\
\hspace*{6 mm}{\tt (1)} Increase the step counter: $k=k+1$;\\
\hspace*{6 mm}{\tt (2)} Transfer the minimum weight arc $\mu^{(r)}_{k}$ from the bucket $\mathcal{B}$ to the graph $T^{(r)}$;\\
\hspace*{6 mm}{\tt (3)} Set $T^{(r)}_k = T^{(r)}$;\\
\hspace*{6 mm}{\tt (4)} Set $\gamma_{k} = U_{\mu^{(r)}_k}$;\\
\hspace*{6 mm}{\tt (5)} Check whether $T^{(r)}$ has a cycle; if so, denote the cycle by $c_{r+1}$;\\
\hspace*{12 mm} {\tt if} $\{$ $T^{(r)}$ contains no cycle $\}$\\
\hspace*{12 mm} {\tt (6)} Decrease the eigenvalue index $m = m-1$ and set $\Delta_{m} = \gamma_k$; $\alpha_{m} = \kappa_{\mu_k}$; \\
\hspace*{12 mm} {\tt (7)} Set $k(m) = k$ and denote by $(s^{\ast})^{(r)}_m$ the sink of\\
\centerline{
 the connected component of $T^{(r)}_{k-1}$ containing the tail of $\mu^{(r)}_k$;\\ 
}
\hspace*{12 mm} {\tt end if}\\
{\tt end while}\\
{\tt if} $\{$ a cycle  was detected at step {\tt (5)} $\}$\\
\hspace*{6 mm}{\tt (8)} Save the index $k$ at which the cycle $c_{r+1}$ arose: set $k_{r+1} = k$;\\
\hspace*{6 mm}{\tt (9)} {Remove the arcs with both  tails and heads  in $c_{r+1}$ (if any)};\\
\hspace*{12 mm}{\tt if} $\{$ the set of  arcs of $G^{(r)}$ with tails in $c_{r+1}$ and heads not in $c_{r+1}$ is not empty $\}$\\
\hspace*{12 mm}{\tt (10)} Modify weights and pre-factors of all arcs\\
\centerline{
 with tails in $c_{r+1}$  and heads not in $c_{r+1}$ according to Eq. \eqref{update_rules};\\
 }
\hspace*{12 mm}{\tt (11)} Contract $c_{r+1}$ into a super-vertex $v_{c_{r+1}}$: \\
\centerline{
$G^{(r+1)}$ = {\tt Contract}$(G^{(r)},c_{r+1})$; $T^{(r+1)}$ = {\tt Contract}$(T^{(r )},c_{r+1})$;\\
}
\hspace*{12 mm}{\tt (12)} Denote  the min-arc from $v_{c_{r+1}}$ by $\mu(v_{c_{r+1}})$ and add it to $\mathcal{B}$;\\
\hspace*{12 mm}{\tt (13)} Call the function {\tt FindTgraphs}$\left(r+1,k,G^{(r+1)},T^{(r+1)},\mathcal{B}\right)$;\\
\hspace*{12 mm}{\tt (14)} Expand the super-vertex $v_{c_{r+1}}$ back into the cycle $c_{r+1}$: \\
 \centerline{
 {\tt for} $j \ge k_r$   $T^{(r)}_j$ = {\tt Expand}$(T^{(r+1)}_j,c_{r+1})$;   {\tt end for} 
}
\hspace*{12 mm}{\tt end if}\\
{\tt end if}\\
\noindent
{\tt end }
\end{algorithm}

\begin{figure}[htbp]
\begin{center}
\includegraphics[width = \textwidth]{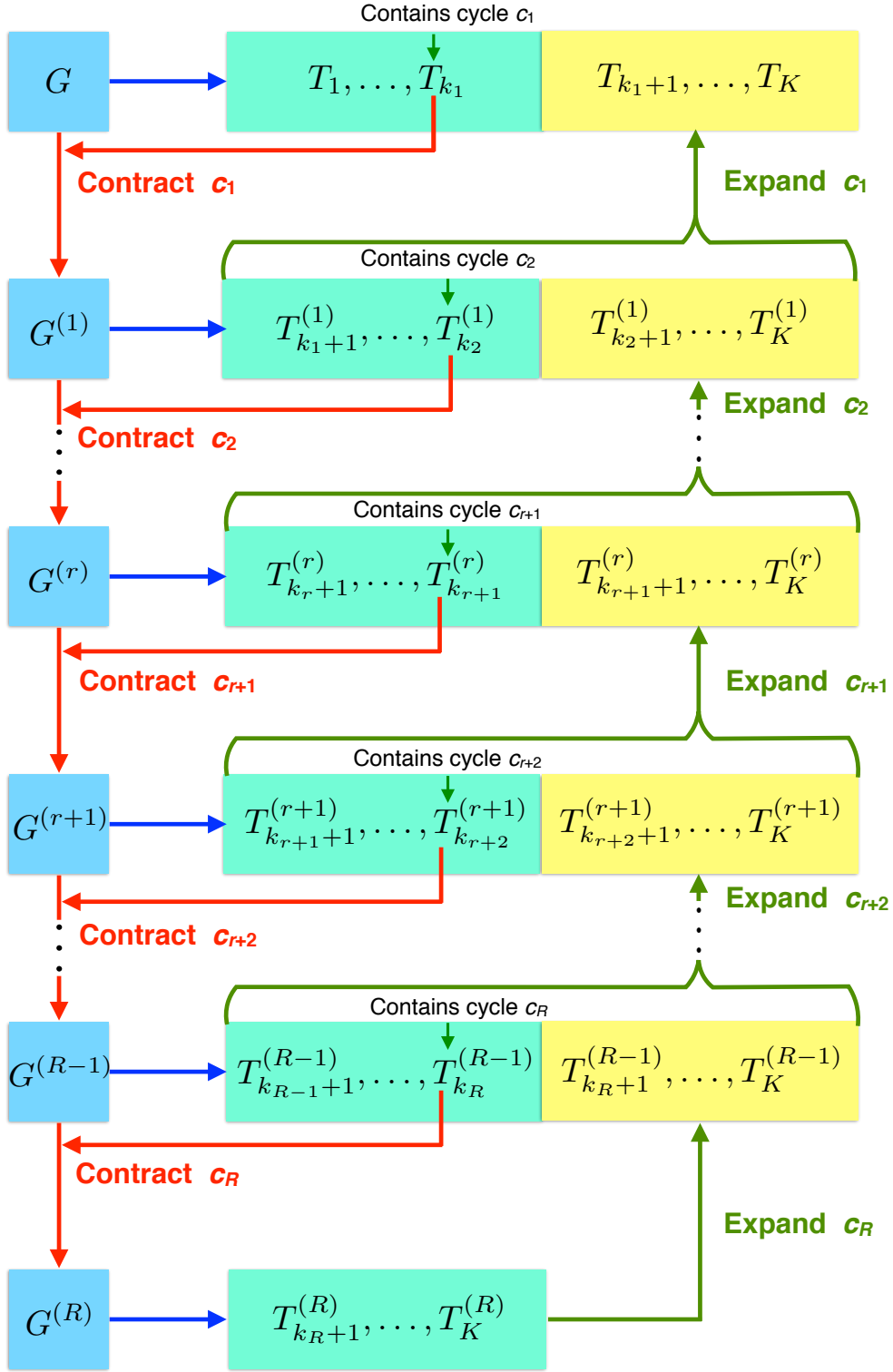}
\caption{The flow chart of Algorithm 1. The superscripts indicate the recursion levels. }
\label{fig:flowchart}
\end{center}
\end{figure}

The flow chart of Algorithm 1 is shown in Fig. \ref{fig:flowchart}.
At each recursion level, the T-graphs obtained in the while-cycle  are placed into the green boxes, while the ones
 obtained by the {\tt Expand} operator are placed into the yellow boxes. 
The number $K$ is the index of the last step of Algorithm 1.
At $k=K$, cycle $c_{R+1}$ is detected in $T^{(R)}$. { Hence steps {\tt (8)} and {\tt (9)} are performed.} 
However, since it is the last step, the set of arcs in $G^{(R)}$
with tails in $c_{R+1}$ and heads not in $c_{R+1}$ is empty. Therefore, steps {\tt (10) - (14)} are not executed. Instead,
the algorithm returns to the previous recursion level and lands at step {\tt (14)}  where the function {\tt Expand} is called.

The output of Algorithm 1 consists of several datasets.
\begin{itemize}
\item 
The set of critical exponents $\gamma_1<\ldots<\gamma_K$ defining the critical timescales 
$e^{\gamma_1/\varepsilon}<\ldots<e^{\gamma_K/\varepsilon}$.
\item 
The hierarchy of T-graphs $T_0\subset T_1 \subset \ldots\subset T_K$, indicating the most likely transitions to observe up to timescales $t$, where
$0\le t<e^{\gamma_1/\varepsilon}$, $e^{\gamma_1/\varepsilon}\le t< e^{\gamma_2/\varepsilon}$, ..., 
$e^{\gamma_K/\varepsilon}\le t<\infty$ respectively.
\item
The set of exponents $\Delta_{n-1}<\Delta_{n-1}<\ldots<\Delta_1$ and pre-factors $\alpha_{n-1},\alpha_{n-2},\ldots,\alpha_1$
determining sharp estimates of eigenvalues according to $\lambda_{m}=\alpha_me^{-\Delta_m/\varepsilon}$.
Note that  $m = n-k+r$.
\item
The set of sinks $s^{\ast}_m$ and $z^{\ast}_m$. 
The optimal W-graph $g^{\ast}_m$ has the sinks $\{s_j^{\ast}\}_{j=0}^{m-1}$ where the sink $s^{\ast}_0$ is 
defined as the sink $z^{\ast}_1$. The sinks $z^{\ast}_m$ are the sinks of those connected components of 
the optimal W-graphs $g^{\ast}_{m+1}$ that absorb the connected components with sinks $s^{\ast}_m$ as the graphs $g^{\ast}_m$
are created.
The sets of vertices in connected components
of the optimal W-graph $g^{\ast}_m$ coincides with those of the T-graph $T_{n-m+r}$.
An explanation how one can extract the optimal W-graphs $g^{\ast}_m$ from the graphs $T_{n-m+r}$ is given in Section \ref{sec:extractW}.
\item
The set of step numbers $k_1,\ldots,k_R$ at which the cycles $c_1,\ldots,c_R$ are created.
\end{itemize}

\begin{remark}
\normalfont
The number of arcs in the bucket $\mathcal{B}$ is reduced by one after each step when no cycle is created, and 
remains that same otherwise.
Let $k^{\ast}$ be the smallest step index such that the number of arcs in the bucket  $\mathcal{B}$ at the end of step $k^{\ast}$
reaches one. 
The eigenvalue index $m$ switches to $m=1$ during step $k^{\ast}$. 
 A new cycle will be obtained at every step $k$ such that $k^{\ast}<k\le K$. 
 For all $k\ge k^{\ast}$, the T-graphs $T_k$
are connected, while for all $k<k^{\ast}$, $T_k$ are not connected. 
\end{remark}
\begin{remark}
\normalfont
The total number of steps $K$ and the total number of cycles $N_c$ satisfy the relationship
\begin{equation}
\label{ncycles}
K - N_c = n -1.
\end{equation}
The maximal recursion level $R= N_c - 1$. Hence $K-R=n$.
\end{remark}

\begin{remark}
\normalfont
In the case of a time-reversible Markov chain, the total number of Freidlin's cycles of nonzero orders is $n-1$ \cite{cam1}.
Therefore, the total number of steps made by Algorithm 1 in this case  will be
$K = 2n - 2$.
In the general case, the total number of cycles $N_c$ satisfies
\begin{equation}
\label{ncycles2}
1\le N_c\le n-1.
\end{equation} 
Indeed, let us construct a directed rooted tree of cycles, whose leaves are the vertices, the root is the cycle $c_{N_c}$, the 
other nodes correspond to the cycles $c_r$, $1\le r\le R$, and whose arcs are directed toward the root.
Each node of this tree except for the root has exactly one outgoing arc. Hence, the total number of arcs is $n + N_c-1$.
 On the other hand, the nodes corresponding to the cycles $c_r$, $1\le r\le N_c$ have at least two incoming arcs.
 Therefore, the number of arcs $N_a$ must satisfy
 $$N_a = n+N_c-1 \ge 2N_c.$$ 
This inequality implies that $N_c\le n-1$. Besides,  Assumption \ref{A4}, implies that there must be at least one cycle.
This proves Eq. \eqref{ncycles2}. 
Hence, the maximal possible recursion depth  $R=n-2$ is achieved 
if and only if the Markov chain is time-reversible.
\end{remark}
\begin{remark}
\normalfont
The stopping criterion of Algorithm 1 can be modified depending on the goal.
If one would like to use Algorithm 1 { only} for finding estimates for eigenvalues and optimal W-graphs, it suffices to stop 
the while-cycle as soon as 
$|\mathcal{B}|=1$.
If one would like to find all T-graphs  for the timescales $0\le e^{\theta/\varepsilon}<\Delta$, one needs to stop the while-cycle as soon as
$\gamma_k \ge \Delta$.
\end{remark}
\begin{remark}
\label{rem:1}
\normalfont
For programming purposes,
if a cycle $c$ is encountered, 
it is more convenient to merge the sets of outgoing arcs from the vertices in $c$  instead of contracting $c$ into a single super-vertex
as it is done in the single-sweep algorithm in \cite{CG}.
\end{remark}

\subsection{The functions {\tt Contract} and {\tt Expand}}
\label{sec:contract_expand}
The function $G'$ = {\tt Contract}$(G,c)$ maps the graph $G$  onto the graph $G'$ 
as follows. All vertices of $G$  belonging to the cycle $c$  are mapped onto a single vertex $v_c$ of $G'$.
More formally, let $G=G(\mathcal{S},\mathcal{A},\mathcal{U})$ and $G'=G'(\mathcal{S'},\mathcal{A'},\mathcal{U'})$.
Let $\mathcal{S}_c$ be the subset of vertices lying in the cycle $c$, and $\mathcal{A}_c$ be the subset of arcs of $G$ defined by
$$
\mathcal{A}_c:=\{(i\rightarrow j)~|~i,j\in \mathcal{S}_c\}.
$$
Then
$\mathcal{S}' = (\mathcal{S}\backslash \mathcal{S}_c)\cup v_c$, $\mathcal{A}' = \mathcal{A}\backslash \mathcal{A}_c$,
$U'_{ij} = U_{ij}$ if $(i\rightarrow j)\in \mathcal{A}$ and $i\notin \mathcal{S}_c$, and  $U'_{v_cj} = \mathcal{U}_{ij}$,
if $(i\rightarrow j)\in \mathcal{A}$ and $i\in S_c$.

The function $G$ = {\tt Expand}$(G',c)$ is the inverse function of $G'$ = {\tt Contract}$(G,c)$.
If $G'=G'(\mathcal{S'},\mathcal{A'},\mathcal{U'})$, $G=G(\mathcal{S},\mathcal{A},\mathcal{U})$, 
then
$\mathcal{S} = (\mathcal{S'}\backslash \{v_c\})\cup \mathcal{S}_c$, $\mathcal{A} = \mathcal{A}\cup \mathcal{A}_c$,
$U_{ij} = U'_{ij}$ if $(i\rightarrow j)\in \mathcal{A'}$ and $i\neq v_c$, and  $U_{ij} ={U'}_{v_cj}$,
if $(i\rightarrow j)\in \mathcal{A}'$ and $i\in \mathcal{S}_c$. 
%

\subsection{An explanation of the update rules for arc weights and pre-factors}
\label{sec:update_rule}
In this Section, we explain the update rules for arc weights and pre-factors given by Eq. \eqref{update_rules} (also see \cite{freidlin-cycles,freidlin-physicad, FW}). 
A cycle 
$$
c = \{i_1\rightarrow i_2\rightarrow\ldots\rightarrow i_q\rightarrow i_1\}
$$ 
appears in Algorithm 1 if and only if the min-arcs from the vertices $i_l$, $l=1,\ldots,q-1$ are $\mu(i_l) = i_{l}\rightarrow i_{l+1}$, and 
the min-arc from $i_q$ is $\mu(i_q)=i_q\rightarrow i_1$. Let us restrict the  dynamics of the Markov chain to the cycle $c$ and 
  for each state $i_l\in c$ neglect the
transition rates of smaller exponential orders than $e^{-U_{\mu(i_l)}/\varepsilon}$ which is $\max_{(i_l\rightarrow j)\in\mathcal{A}}e^{-U_{i_lj}/\varepsilon}$
for sufficiently small $\varepsilon$. 
Then we obtain the following
generator matrix $L^c$ approximately describing the dynamics within the cycle $c$:
\begin{equation}
\label{Lapprox}
L^c=\left[\begin{array}{cccc}
-L^c_{\mu(i_1)}&L^c_{\mu(i_1)}&&\\
&-L^c_{\mu(i_2)}&L^c_{\mu(i_2)}&\\
&\ddots&\ddots&\\
&&-L_{\mu(i_{q-1})}^c&L_{\mu(i_{q-1})}^c\\
L^c_{\mu(i_q)}&&&-L^c_{\mu(i_q)}\end{array}\right],\quad {\rm where} ~~L^c_{\mu(i_l)} = \kappa_{\mu(i_l)}e^{-U_{\mu(i_l)}/\varepsilon}.
\end{equation}
Solving $\pi_c^TL^c=0$, $\sum \pi_c(i) = 1$, we find the approximation to the invariant distribution in $c$:
\begin{equation}
\label{pic}
\pi_c(i_l) \approx \frac{\frac{1}{\kappa_{\mu(i_l)}}e^{U_{\mu(i_l)}/\varepsilon}}{\sum_{j=1}^q\frac{1}{\kappa_{\mu(i_j)}}e^{U_{\mu(i_j)}/\varepsilon}},\quad l=1,\ldots,q.
\end{equation}
Without the loss of generality we assume that the last added arc to the cycle $c$ is $\mu(i_q)$. 
Then $U_{\mu(i_q)} > U_{\mu(i_l)}$ for $l=1,2,\ldots,q-1$.
Multiplying the enumerator and the denominator in Eq. \eqref{pic} by $\kappa_{\mu(i_q)}e^{-U_{\mu(i_q)}/\varepsilon}$ and neglecting small summands
in the denominator we obtain
\begin{equation}
\label{pic1}
\pi_c \approx\left[ \frac{\kappa_{\mu(i_q)}}{\kappa_{\mu(i_1)}}e^{-(U_{\mu(i_q)}-U_{\mu(i_1)})/\varepsilon},\ldots,
\frac{\kappa_{\mu(i_q)}}{\kappa_{\mu(i_{q-1})}}e^{-(U_{\mu(i_q)}-U_{\mu(i_{q-1})})/\varepsilon},1\right].
\end{equation}
The quasi-invariant distribution $\pi_c$ {(i.e., the approximation to the invariant distribution for dynamics restricted to the subset of states $\{i_1,\ldots,i_q\}$)}
allows us to obtain sharp estimates for the exit rates from the cycle $c$ via arcs with tails in $c$.
For any arc $i_l\rightarrow j$ where $i_l\in c$ and $j\notin c$, the exit rate  from $c$ through the arc $i_l\rightarrow j$ is approximated by
\begin{equation}
\label{ur}
\pi_c(i_l)L_{i_lj} = \frac{\kappa_{\mu(i_q)}\kappa_{i_lj}}{\kappa_{\mu(i_l)}}e^{-(U_{i_lj} + U_{\mu(i_q)}-U_{\mu(i_l)})/\varepsilon}.
\end{equation}
If one treats the cycle $c$ as a macro-state, i.e., contracts it into a single vertex, then the effective exit rate from it is given by Eq. \eqref{ur}.
Recalling that  $\mu(i_q)$ is the last added arc, 
one readily reads off the update rule for the arc weights and the pre-factors given by Eq. \eqref{update_rules}.

\subsection{Extraction of optimal W-graphs}
\label{sec:extractW}
{ 
Let us imagine that we have abolished the {\tt Contract} and {\tt Expand} operators in Algorithm 1 and manipulate the arc sets instead as it is done in
the single-sweep algorithm \cite{CG}, i.e., as it is suggested in Remark \ref{rem:1} in Section \ref{sec:code1}. I.e., instead of contracting a cycle $c$ into a super-vertex, we 
$(i)$ discard all arcs with both tails and heads in $c$ that are not in the current graph $T$,
$(ii)$ modify the weights and the pre-factors of all outgoing arcs with tails in $c$ and heads not in $c$ according to Eq. \eqref{update_rules} 
and denote the set of such arcs by $\mathcal{B}_c$, and
$(iii)$ find the arc of minimal weight in $\mathcal{B}_c$ and add it to the bucket $\mathcal{B}$; this arc becomes the min-arc for all vertices in $c$.
The weight of any arc $(i\rightarrow j)$ modified  according to the update rule Eq. \eqref{update_rules}  
is equal to the increment of the total weight of the graph
obtained from the current optimal W-graph by replacing the arc $\mu(i)$ with $(i\rightarrow j)$ and adding the arc that led to the current cycle.
This fact and the weak nested property 
of the optimal W-graphs (Theorem \ref{nested_thm}) guarantee that the whole hierarchy of the optimal W-graphs $g^{\ast}_{n-1}$, ..., $g^{\ast}_1$ 
can be extracted from the T-graphs $T_{k(n-1)}$, ..., $T_{k(1)}$ built by 
Algorithm 1. 
}

Recall that $k(m)$ is the step number at which  the eigenvalue counter switches to $m$:
 $k(m) = n-m+r$, where $r$ is the recursion depth at step $k(m)$ in Algorithm 1. 
For convenience, we denote the unique absorbing state of the T-graph $T_{k(1)}$ by $s^{\ast}_0$, i.e., $z^{\ast}_1\equiv s^{\ast}_0$.
The optimal W-graphs $g^{\ast}_m$ can be extracted from the corresponding T-graphs $T_{k(m)}$ for $1\le m\le n-1$. 
We emphasize that $T_{k(m)}$ is fully expanded graph, i.e., its set of vertices is $\mathcal{S}$.
The set of sinks of $g^{\ast}_m$ is $\{s^{\ast}_j\}_{j=0}^{m-1}$.
In order to obtain the set of arcs of $g^{\ast}_m$, take the set of arcs of $T_{k(m)}$, then,
starting from every sink of $g^{\ast}_m$, trace the incoming arcs backwards and 
make sure that every vertex is visited at most once in the process. 
This procedure can be programmed using the recursive function {\tt AddArc2Wgraph} as follows. Mark all vertices in $\mathcal{S}$ as {\tt NotVisited}.
Set up a graph $g$ with the set of vertices $\mathcal{S}$ and no arcs. Then \\
{\tt for} $j = 0 : m - 1$\\
\hspace*{6 mm} Call {\tt AddArc2Wgraph}$(s^{\ast}_j)$;\\
{\tt end for}\\
{\bf Function} {\tt AddArc2Wgraph}$(s)$ \\
Mark the vertex $s$ as {\tt Visited}; \\
Let $\mathcal{A}_s$ be the set of arcs with heads at $s$;\\
{\tt while} $\{$ $\mathcal{A}_s$ is not empty $\}$\\
\hspace*{6mm} Remove an arc $i\rightarrow s$ from $\mathcal{A}_s$;\\
\hspace*{6mm} {\tt if } $\{$ $i$ is {\tt NotVisited}  $\}$\\
\hspace*{12 mm} Add $i\rightarrow s$ to $g$;\\
\hspace*{12mm} Call {\tt AddArc2Wgraph}$(i)$;\\
\hspace*{6mm} {\tt end if } \\
{\tt end while}\\
{\tt end}\\
The resulting graph $g$ is the desired optimal W-graph $g^{\ast}_m$.

\subsection{An illustrative example}
\label{sec:Texample}

In this Section, we demonstrate how Algorithm 1 works on the Markov chain corresponding to the graph $G$ shown in Fig. \ref{fig:Tex1} (Left).
\begin{figure}[htbp]
\begin{center}
\includegraphics[width=0.75\textwidth]{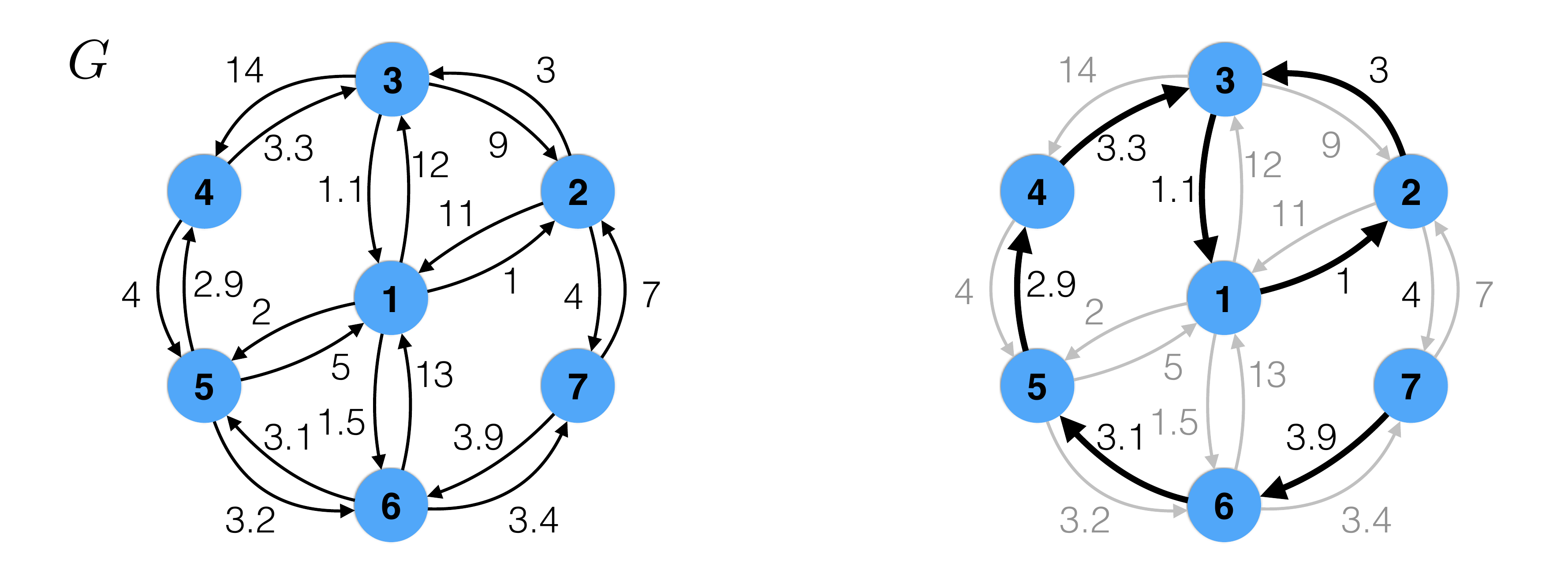}
\caption{An example in Section \ref{sec:Texample}. Left: A graph $G$ representing a continuous-time Markov chain. 
Right: Min-arcs from each vertex are  highlighted.}
\label{fig:Tex1}
\end{center}
\end{figure}
During the initialization, the min-arcs from each vertex are found and moved to the bucket $\mathcal{B}$ (see Fig. \ref{fig:Tex1} (Right)).
The initial graph $T$ is $T(\{1,2,3,4,5,6,7\},\emptyset,\emptyset)$.
The function {\tt FindTgraphs}$(r=0,k=0,G,T,\mathcal{B})$ finds the T-graphs $T_k$ and 
the numbers $\gamma_k$, $k=1,2,3,4$, as shown in Fig. \ref{fig:Tex2}, and $k(6) = 1$, $k(5)=2$, $k(4)=3$. 
The optimal W-graphs $g_6^{\ast}\equiv T_1$, $g_5^{\ast}\equiv T_2$, $g_4^{\ast}\equiv T_3$ and the numbers
$\Delta_6\equiv \gamma_1$, $\Delta_5\equiv \gamma_2$, and  $\Delta_4\equiv \gamma_3$ are found immediately. 
The pre-factors $\alpha_6$, $\alpha_5$, and $\alpha_4$
also can be found immediately if the pre-factors $\kappa$ are available. 
\begin{figure}[htbp]
\begin{center}
\includegraphics[width=0.75\textwidth]{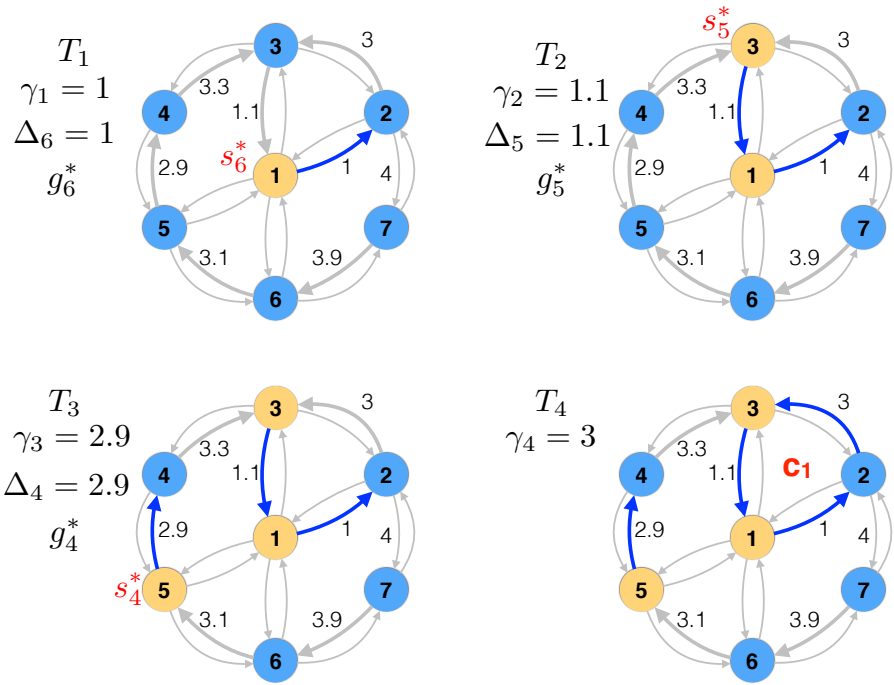}
\caption{An example in Section \ref{sec:Texample}. The T-graphs $T_1$, $T_2$, $T_3$ and $T_4$.
The arcs in $\mathcal{B}$ are shown with thick grey curves. The arcs of $T$ are shown with thick blue curves.
{Sinks of the optimal W-graphs are blue, the other vertices are yellow. }
}
\label{fig:Tex2}
\end{center}
\end{figure}
 The graph $T_4$ contains the cycle $c_1\equiv\{1\rightarrow2\rightarrow3\rightarrow1\}$.
The appropriate arc weights and pre-factors (if available) 
are modified according to Eq. \eqref{update_rules}.
For the arc weights we have
\begin{align*}
 &U_{34}:\quad 14 - 1.1 + 3 = 15.9;\\
 &U_{15}:\quad 2 - 1 + 3 = 4;\\
 &U_{16}:\quad 1.5 - 1 + 3 = 3.5.
 \end{align*}
 Then the cycle $c_1$ is contracted into a single super-vertex { $\{1,2,3\}$. Its min-arc $\{1,2,3\}\rightarrow 6$ of weight 3.5}
 is added to the bucket $\mathcal{B}$,
 and the function {\tt FindTgraphs}$(r=1,k=4,G^{(1)},T^{(1)},\mathcal{B})$ is called.
 The graph $G^{(1)}$ is shown in Fig. \ref{fig:Tex3} (Top Left).
{\tt FindTgraphs}$(r=1,k=4,G^{(1)},T^{(1)},\mathcal{B})$ finds the T-graphs $T^{(1)}_5$, $T^{(1)}_6$, and $T_7^{(1)}$
and the numbers $\gamma_5 = \Delta_3$, $\gamma_6=\Delta_2$, and $\gamma_7$, and $k(3) = 5$, $k(2) = 6$ (Fig. \ref{fig:Tex3}). The graph 
$T^{(1)}_7$ contains a cycle $c_2=\{6\rightarrow5\rightarrow4\rightarrow\{1,2,3\}\rightarrow6\}$. 
\begin{figure}[htbp]
\begin{center}
\includegraphics[width=0.75\textwidth]{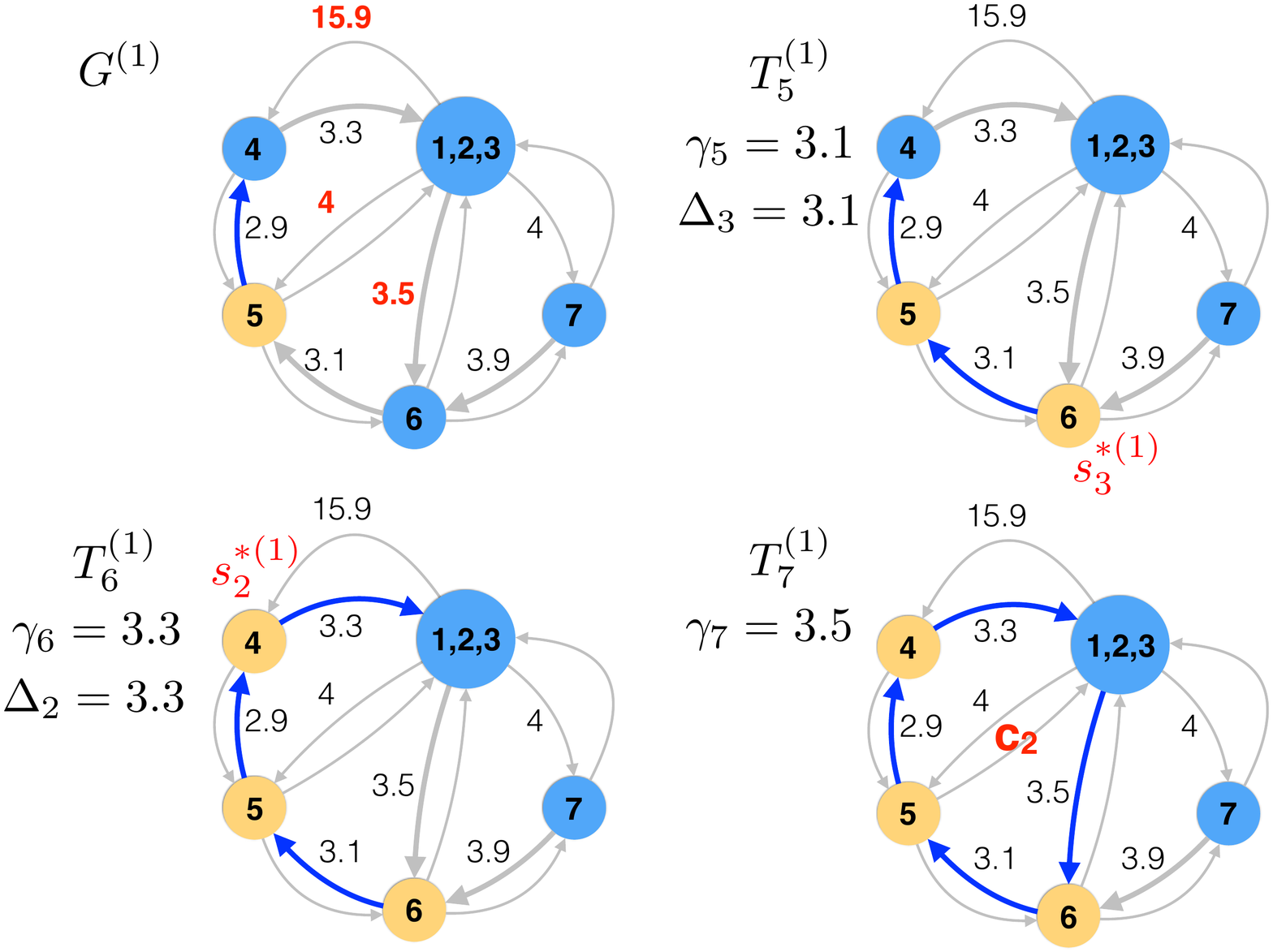}
\caption{An example in Section \ref{sec:Texample}.
The modified arc weights are shown in bold {red} in the graph $G^{(1)}$.  The T-graphs $T^{(1)}_5$, $T^{(1)}_6$, and $T_7^{(1)}$.
The arcs in $\mathcal{B}$ are shown with thick grey curves. The arcs of $T^{(1)}$ are shown with thick blue curves.
{Sinks of the optimal W-graphs or super-vertices containing their sinks are blue, the other vertices are yellow. }
}
\label{fig:Tex3}
\end{center}
\end{figure}
The appropriate arc weights and pre-factors (if available) 
are modified according to Eq. \eqref{update_rules}:
$$
 U_{67}:\quad 3.4 - 3.1 + 3.5 = 3.8.
$$
 Then the cycle $c_2$ is contracted into a single vertex { $\{1,2,3,4,5,6\}$. Its min-arc $\{1,2,3,4,5,6\}\rightarrow 7$}  of 
 weight 3.8 is added to the bucket $\mathcal{B}$,
 and the function {\tt FindTgraphs}$(r=2,k=7,G^{(2)},T^{(2)},\mathcal{B})$ is called.
 The graph $G^{(2)}$ is shown in Fig. \ref{fig:Tex4} (Left).
\begin{figure}[htbp]
\begin{center}
\includegraphics[width=0.75\textwidth]{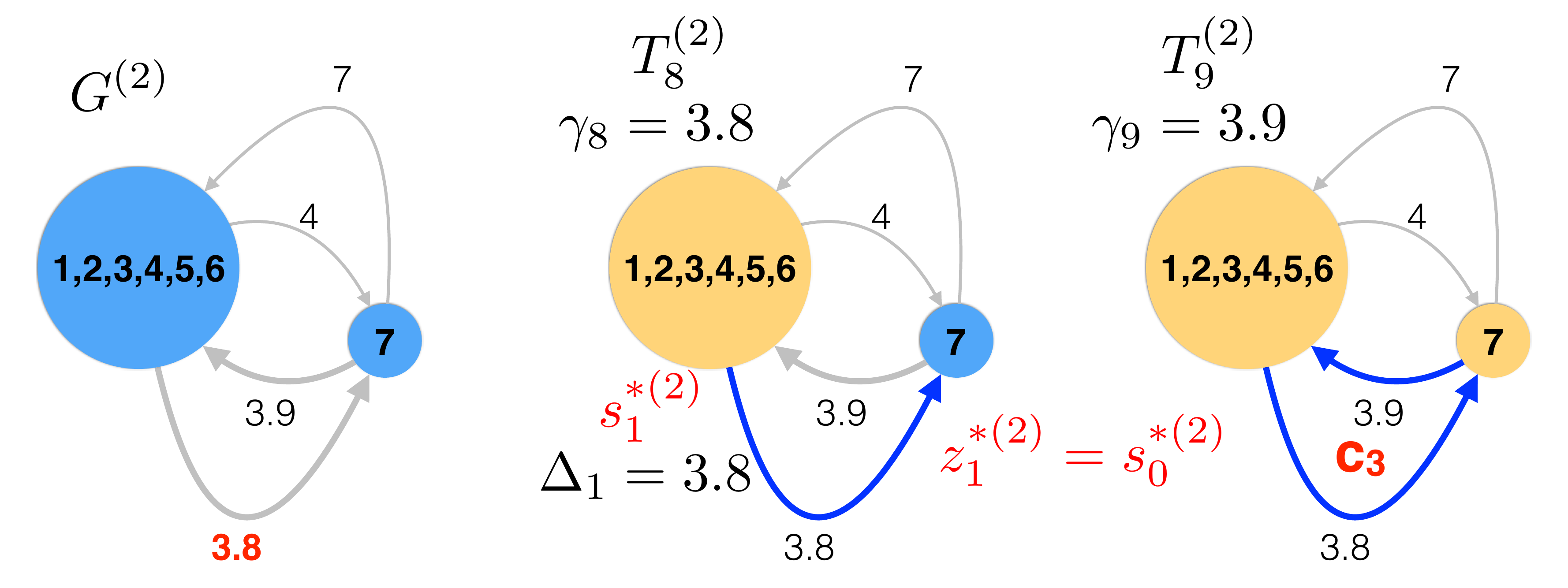}
\caption{An example in Section \ref{sec:Texample}.  
The modified arc weights are shown in bold read in the graph $G^{(2)}$.
The T-graphs $T^{(2)}_8$ and $T^{(2)}_9$.
The arcs in $\mathcal{B}$ are shown with thick grey curves. The arcs of $T^{(1)}$ are shown with thick blue curves.
}
\label{fig:Tex4}
\end{center}
\end{figure}
 {\tt FindTgraphs}$(r=2,k=7,G^{(2)},T^{(2)},\mathcal{B})$ finds $T_8^{(2)}$ and $T_9^{(2)}$,
  $\gamma_8 = \Delta_1$ and $\gamma_9$, and $k(1) = 8$ (see Fig. \ref{fig:Tex4}). 
 The graph $T^{(9)}$
 contains the cycle 
 $c_3=\{\{1,2,3,4,5,6\}\rightarrow 7\rightarrow \{1,2,3,4,5,6\}\}$.
 After the cycle $c_3$ is created,  the set of arcs in $G^{(2)}$ with tails in $c_3$ and heads not in $c_3$ is empty.
 Hence the condition of the {\tt if}-statement following Step {\tt (9)} in Algorithm 1 is {\tt false}. Hence Steps {\tt (10) - (14)}
 are not executed. In particular, the cycle $c_3$ is not contracted, and the function {\tt FindTgraphs} is not called.
Then  {\tt FindTgraphs}$(r=2,k=7,G^{(2)},T^{(2)}_7,\mathcal{B})$ is completed, and 
 the control returns to Step {\tt (14)} of {\tt FindTgraphs}$(r=1,k=4,G^{(1)},T^{(1)},\mathcal{B})$.
After the graphs $T^{(1)}_8$ and $T^{(1)}$ are obtained by expanding the cycle $c_2$,  the control returns to Step {\tt (14)}
of {\tt FindTgraphs}$(r=1,k=4,G^{(1)},T^{(1)},\mathcal{B})$. 
Then the T-graphs $T^{(1)}_k$ are expanded to 
$T_k$ for $k = 5,6,7,8,9$ respectively (Fig. \ref{fig:Tex5}).
\begin{figure}[htbp]
\begin{center}
\includegraphics[width=0.75\textwidth]{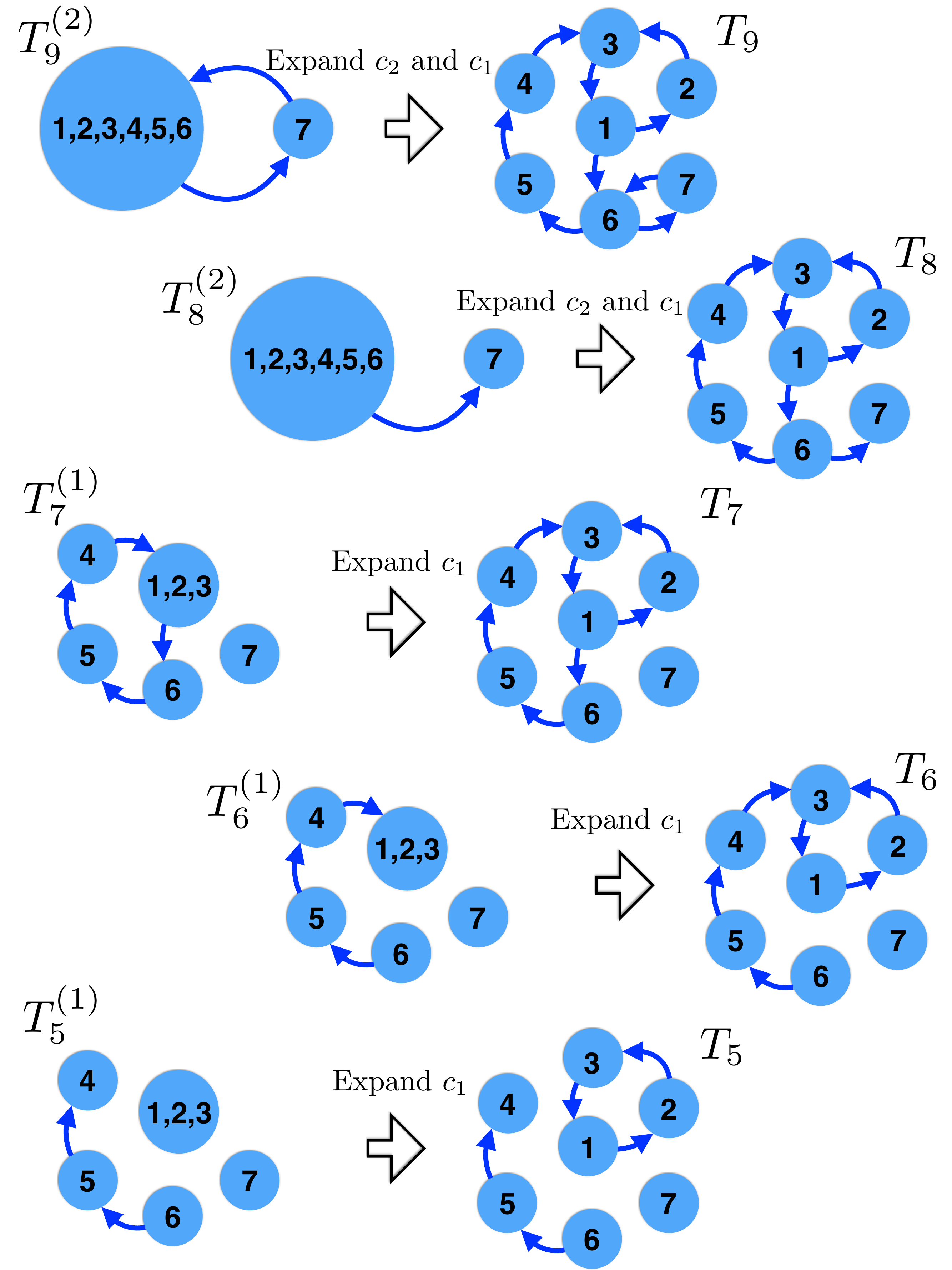}
\caption{An example in Section \ref{sec:Texample}.  The T-graphs $T_9$ and $T_8$
are obtained by expanding the cycles $C_2$ and $c_1$. The graphs $T_7$, $T_6$, and $T_5$ are 
obtained by expanding the cycle $c_1$.
Sinks of the optimal W-graphs or super-vertices containing their sinks are blue, the other vertices are yellow. 
}
\label{fig:Tex5}
\end{center}
\end{figure}
Finally, one can extract the optimal W-graphs $g^{\ast}_m$, $m=1,2,3$, from the T-graphs $T_{k(m)}$,
following the recipe proposed in Section \ref{sec:extractW} (Fig. \ref{fig:Tex6}).
\begin{figure}[htbp]
\begin{center}
\includegraphics[width=0.75\textwidth]{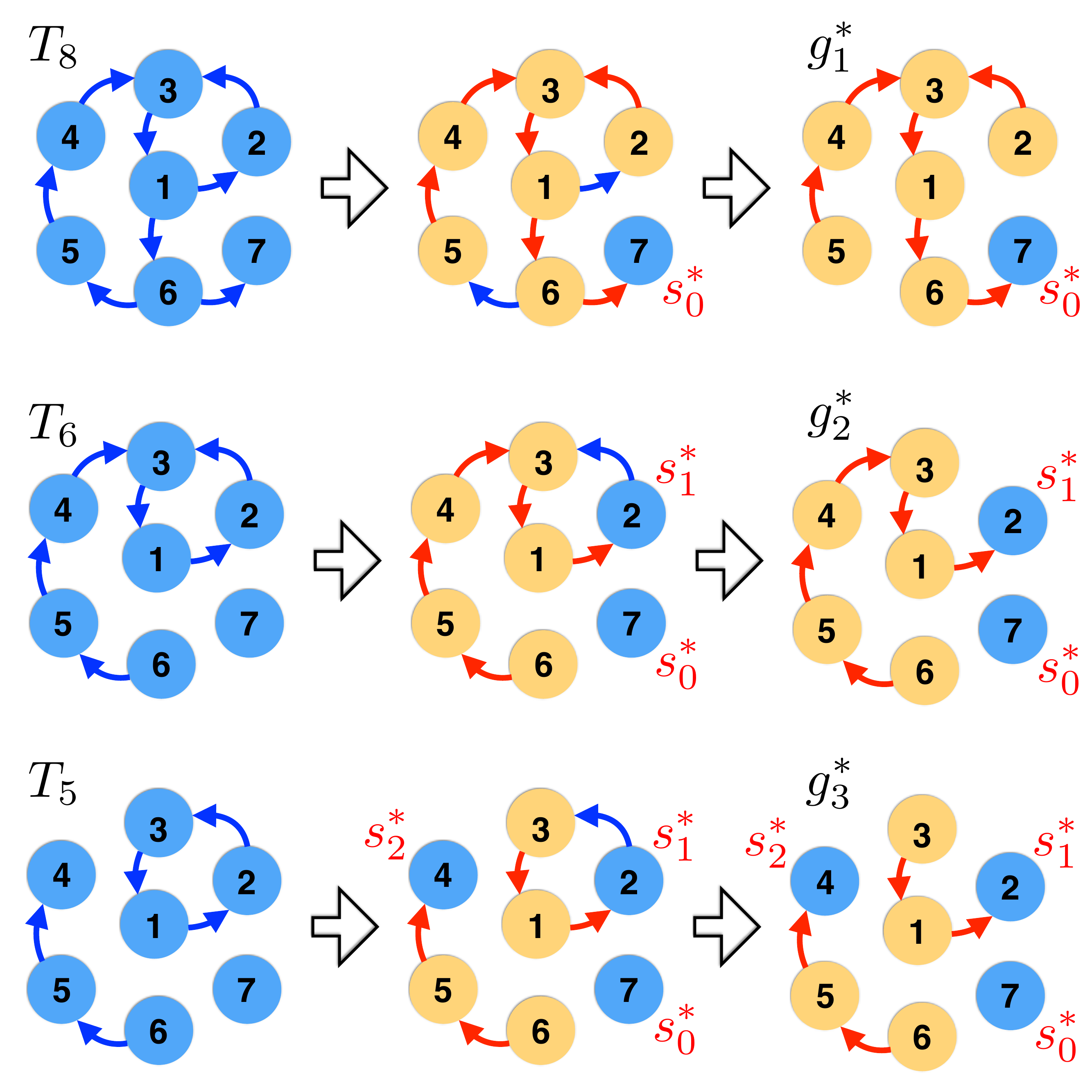}
\caption{An example in Section \ref{sec:Texample}.  Extracting the optimal W-graphs $g_1^{\ast}$, $g^{\ast}_2$ , and $g^{\ast}_3$
from the T-graphs $T_8$, $T_6$, and $T_5$ respectively.
}
\label{fig:Tex6}
\end{center}
\end{figure}
 
 
%
%
%
%

\section{The case with symmetry}
\label{sec:sym}
In this Section, we introduce Algorithm 2 for the study of metastability 
in continuous-time Markov chains with pairwise transition rates of the form $L_{ij}\asymp e^{-U_{ij}/\varepsilon}$
adopting only Assumptions \ref{A1} and \ref{A2} and abandoning Assumptions \ref{A3}, \ref{A4}, and \ref{A5}.

\subsection{Significance of Assumptions \ref{A1} - \ref{A5}}
\label{sec:sign}
We are going to keep Assumption \ref{A1} saying that the number of vertices in $G(\mathcal{S},\mathcal{A},\mathcal{U})$ is finite,
as it guarantees that Algorithm 1  terminates after a finite number of steps. Assumption \ref{A2} saying that $G(\mathcal{S},\mathcal{A},\mathcal{U})$
has a unique closed communicating class, guarantees the uniqueness of the invariant distribution perhaps supported on a subset of $\mathcal{S}$ if 
the corresponding Markov chain is reducible.
If it does not hold, it is natural to consider each closed communicating class of
$G(\mathcal{S},\mathcal{A},\mathcal{U})$ separately. So, we keep it. Note that Assumption \ref{A4}  that the Markov chain is irreducible,
implies Assumption \ref{A2}.

Assumption \ref{A4} is not significant for running Algorithm 1 and for interpreting its output.
However, it guarantees that the last T-graph $T_K$ consists of a single closed communicating class,
that allows us to establish Eqs. \eqref{ncycles} and \eqref{ncycles2}.
Abandoning it means that  $T_K$ might contain transient 
states and might contain no cycles.  

Assumption \ref{A5} saying that all min-arcs are unique at any stage of Algorithm 1, can be split into two conditions:
\begin{enumerate}[$(i)$]
\item
Every vertex of the graphs $G^{(r)}$ has a unique min-arc, $r =  0,\ldots,R$.
\item
The bucket $\mathcal{B}$ has a unique minimum weight arc throughout the whole run of Algorithm 1.
\end{enumerate}
Assumption \ref{A5} implies
 Assumption \ref{A3} that all optimal W-graphs are unique.
The converse is not true. An example, where Condition $(i)$ fails but the optimal W-graphs are unique is shown in Fig. \ref{fig:m1}(a).
\begin{figure}[htbp]
\begin{center}
(a)\includegraphics[width = 0.75\textwidth]{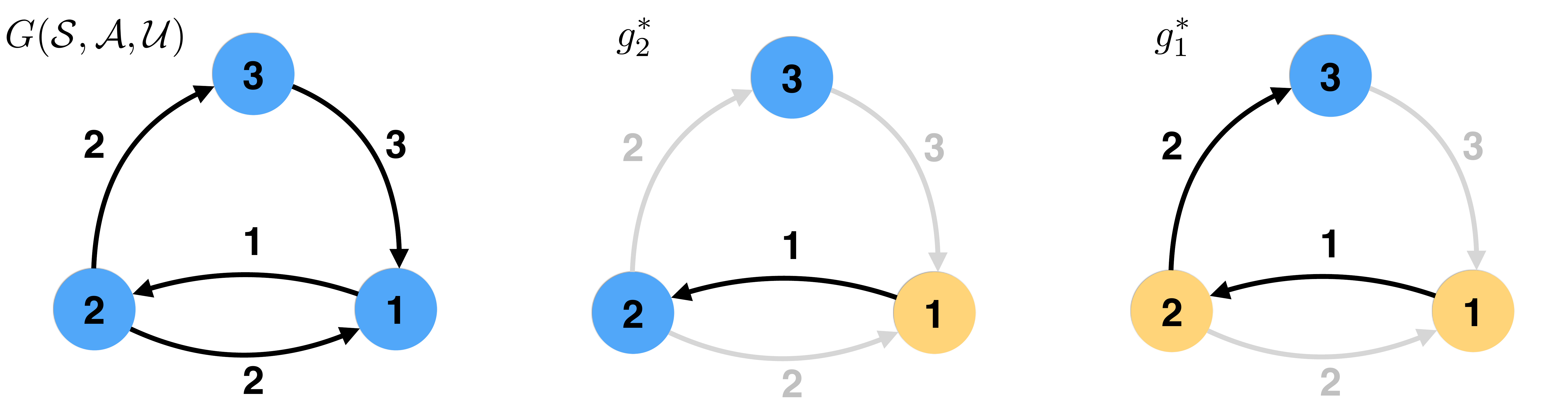}
(b)\includegraphics[width = 0.75\textwidth]{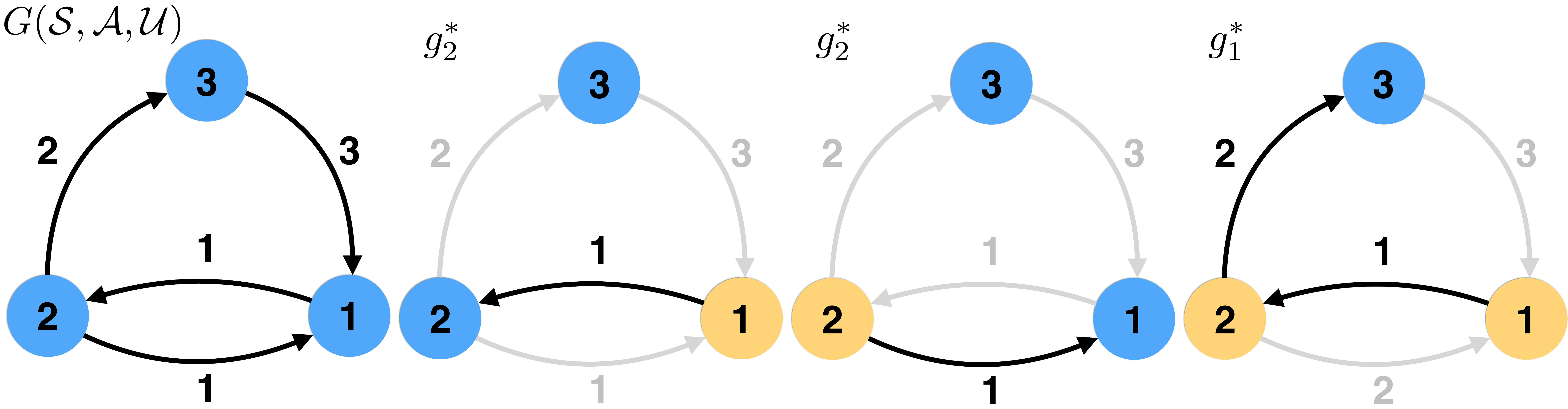}
\caption{
(a): All optimal W-graphs are unique while the min-arc from vertex 2 is not unique.
Furthermore, if the arc $(2\rightarrow 3)$ is chosen as the min-arc from the vertex 2, then the numbers $\gamma_k$ found by Algorithm 1 are 
$\gamma_1 = 1$ and $\gamma_2=2$. If the arc  $(2\rightarrow 1)$ is chosen as the min-arc from the vertex 2, then 
then the numbers $\gamma_k$ found by Algorithm 1 will be $\gamma_1=1$, $\gamma_2=\gamma_3=2$.
(b): All numbers $\Delta$  ($\Delta_2 = 1$ and $\Delta_1=2$) are distinct while the optimal W-graphs are not all unique: 
there are two optimal W-graphs $g^{\ast}_2$ with two sinks.
}
\label{fig:m1}
\end{center}
\end{figure}

The uniqueness of all optimal W-graphs (Assumption \ref{A3}) together with Assumptions \ref{A1} and \ref{A2}
guarantee that the sharp estimates for the eigenvalues given by Theorem \ref{GC_asymeigval} 
are valid. In particular, Assumption \ref{A3}
implies that  all numbers $\Delta_m$, $1\le m\le n-1$, produced by Algorithm 1 are distinct.
The converse is not true.  
An example where all $\Delta_m$'s are distinct but not all optimal W-graphs are unique is shown in Fig. \ref{fig:m1}(b).

If Assumption \ref{A3} is abandoned, { the update rule for the pre-factors \eqref{update_rules}
is no longer justified. A more complicated update rule can be derived instead. We leave it for the future. 
However, as we will show in Section \ref{algorithm2}, Eq. \eqref{update_rules} remains valid for the exponents.  
}


\subsection{Algorithm 2 for the study of metastable behavior}
\label{sec:alg2}
Algorithm 2 is a modification of Algorithm 1 for the case where Conditions $(i)$ and/or $(ii)$ { in Section \ref{sec:sign}}
do not hold. The output of Algorithm 2 is the hierarchy  of the T-graphs $T_p$ 
and the corresponding exponents $\theta_p$, $1\le p\le P$

The structure of Algorithm 2 is similar to the one of Algorithm 1, however, there are important differences.
First, instead of single min-arcs,  the whole sets of min-arcs of the same weight are moved around. Second,
the role of cycles is played by \emph{nontrivial closed communicating classes}.
{ Recall that a closed communication class is a subset of vertices  $C\subset \mathcal{S}$ 
in a directed graph $G(\mathcal{S},\mathcal{A})$ such that
(a) there is a directed path in $G$ leading from any vertex $i\in C$ to any vertex $j\in C$, and 
(b) if there is a directed path from $i\in C$ to $x\in \mathcal{S}$, then $x\in C$.}
The adjective \emph{nontrivial} means \emph{consisting of more than one vertex}. 
Further, we will omit the word \emph{nontrivial} for brevity and refer to them as \emph{closed communicating classes}.
Communicating classes that are not closed will be called \emph{open communicating classes}.


\begin{algorithm}\\
\label{algorithm2}
{\bf Initialization:} Set the step counter $p=0$ and the recursion depth counter $r=0$. Prepare the bucket 
$\mathcal{B}$ as follows. For each vertex $i\in\mathcal{S}$, denote the weight of min-arcs from $i$ by  $U_{\min}(i)$,
find the set of min-arcs 
$$
\mathcal{A}_{\min}(i):=\{(i\rightarrow j )\in\mathcal{A}~|~j\in\mathcal{S},~U_{ij}=U_{\min}(i)\},
$$
and add the whole set $\mathcal{A}_{\min}(i)$ to the bucket $\mathcal{B}$.
Sort the arcs in $\mathcal{B}$
according to their weights in the non-descending order.\\
The graph $G^{(0)}(\mathcal{S}^{(0)},\mathcal{A}^{(0)},\mathcal{U}^{(0)})$ is the original graph $G(\mathcal{S},\mathcal{A},\mathcal{U})$.\\
Initialize the graph $T=T(\mathcal{S}^{(0)},\emptyset,\emptyset)$. Set $T_0=T$.\\

{\bf The main body of the Algorithm:} Call the function {\tt FindSymTgraphs} with arguments  $p = 0$, $r = 0$, $G^{(0)}(\mathcal{S}^{(0)},\mathcal{A}^{(0)},\mathcal{U}^{(0)})$,
$T(\mathcal{S}^{(0)},\emptyset,\emptyset)$, and $\mathcal{B}$.

{\bf Function} {\tt FindSymTgraphs}$\left(r,k,G^{(r)},T^{(r)},\mathcal{B}\right)$\\
{\tt while} $\{$ $\mathcal{B}$ is not empty {\tt and} $T^{(r)}$ has no closed communicating classes $\}$\\
\hspace*{6 mm}{\tt (1)} Increase the step counter: $p=p+1$;\\
\hspace*{6 mm}{\tt (2)} Set $\theta_{p} = \min_{(i\rightarrow j)\in \mathcal{B}}U_{ij}$;\\
\hspace*{6 mm}{\tt (3)} Transfer the set of min-arcs 
$$
\mathcal{A}_p:=\{(i\rightarrow j)\in\mathcal{B}~|~U_{ij}=\theta_p\}
$$
\centerline{
from the bucket $\mathcal{B}$ to the graph $T^{(r)}$;\\
}
\hspace*{6 mm}{\tt (4)} Set $T^{(r)}_p = T^{(r)}$;\\
\hspace*{6 mm}{\tt (5)} Check whether $T^{(r)}$ has a  closed communicating class; \\
{\tt end while}\\
{\tt if} $\{$ $T$ contains $L>0$ closed communicating classes $\}$\\
\hspace*{6 mm}{\tt (6)} Save the index $p$: set $p_{r+1} = p$;\\
\hspace*{6 mm} {\tt for} $\{$ every  closed communicating class $C^l_{r+1}$, $l = 1,\ldots, L$ $\}$\\
\hspace*{12 mm}  {\tt for} $\{$ every vertex $i\in C^l_{r+1}$  $\}$\\
\hspace*{12 mm}  {\tt (7)} { Discard all arcs with tails at $i$  and heads in $C^{l}_{r+1}$;}\\
\hspace*{12 mm}  {\tt (8)} Update the weights of  arcs with tails at $i$  and heads  $j\notin C^{l}_{r+1}$ according to 
\begin{equation}
\label{update_rule1}
U_{ij} = U_{ij} - U_{\min}(i) + \theta_p;
\end{equation}
\hspace*{12 mm}  {\tt end for}\\
\hspace*{6 mm}  {\tt end for}\\
\hspace*{6 mm}{\tt (9)} Contract the closed communicating classes $C^l_{r+1}$ into  super-vertices $v_{C^l_{r+1}}$: \\
\centerline{
$G^{(r+1)}$ = {\tt Contract}$(G^{(r)},\{ C^l_{r+1},~l = 1,\ldots, L\})$; \\
}
\centerline{
$T^{(r+1)}$ = {\tt Contract}$(T^{(r )},\{ C^l_{r+1},~l = 1,\ldots, L\})$;\\
}
\hspace*{6 mm}{\tt for} $\{$ each super -vertex $v_{C^l_{r+1}}$, $l=1,\ldots,L$ $\}$\\
\hspace*{12 mm}{\tt (10)} denote the weight of the min-arc from $v_{C^l_{r+1}}$
by $U_{\min}(v_{C^l_{r+1}})$ \\
\centerline{
and add it to  $\mathcal{B}$;\\
}
\hspace*{6 mm}{\tt end for}\\
\hspace*{6 mm}{\tt (11)} Call the function {\tt FindSymTgraphs}$\left(r+1,p,G^{(r+1)},T^{(r+1)},\mathcal{B}\right)$;\\
\hspace*{6 mm}{\tt (12)} Expand the super-vertices $v_{C^l_{r+1}}$ back into $C^l_{r+1}$, $l = 1,\ldots, L$:\\
 \centerline{
 {\tt for} $j \ge p_r$   $T^{(r)}_j$ = {\tt Expand}$(T^{(r+1)}_j,\{ C^l_{r+1},~l = 1,\ldots, L\})$;   {\tt end for} 
}
{\tt end if}\\
\noindent
{\tt end }
\end{algorithm}

{
\begin{remark}
The set of the critical exponents $\{\gamma_k\}_{k=1}^K$ can be found during the run of Algorithm 2 
by counting the numbers of vertices/super-vertices $m(p)$ from with arcs of weight $\theta_p$ were added at step $p$, 
and then giving each $\theta_p$ the multiplicity $m(p)$.
\end{remark}
 }

The functions {\tt Contract} and {\tt Expand} are defined as described in Section \ref{sec:contract_expand}.
The update rule \eqref{update_rule1} is the same as the rule for the exponential factors in Eq. \eqref{update_rules}.
{It is consistent with the one used in \cite{freidlin_symmetry} for the construction of the hierarchy of Markov chains 
in the case with symmetry. }
Its justification is the following. Suppose a  closed communicating class $C = \{1,\ldots,q\}\subset \mathcal{S}$  in a T-graph $T$
is formed as a result of the addition of a set of arcs of weight $\theta$. 
Let us approximate the dynamics in $C$ by the generator matrix $L^C$
whose off-diagonal entries $L^C_{ij}$ are nonzero if and only if $(i\rightarrow j)\in T$. 
In this case, $L^C_{ij} = L_{ij}$.
The diagonal entries are defined by $L_{ii}^C=-\sum_{j\in C}L^C_{ij}$.
By construction, if a vertex $i$ of $T$ has
more than one outgoing arc, than all outgoing arcs from $i$ have the same weight $U_{\min}(i)$. 
Therefore, the matrix $L^C$
has the following property: each row of $L^C$ has at least one nonzero off-diagonal entry,
and all nonzero entries $L^C_{ij}$ in the row $i$ are of the same exponential order as the diagonal entry $(L^C)_{ii}\asymp\exp(-U_{\min}(i)/\varepsilon)$.
Therefore, $L^C$ can be decomposed into the product
\begin{equation}
\label{S1}
L^C = DM,\quad {\rm where}\quad D ={\rm diag}\{e^{-U_{\min}(1)/\varepsilon},\ldots,e^{ -U_{\min}(q)/\varepsilon}\},
\end{equation}
and all nonzero entries of $M$ are of order one.
Let $\xi$ be the left eigenvector of $M$ corresponding to the eigenvalue zero: $\xi^TM=0$. Then $D^{-1}\xi$ is the left eigenvector of $L^C$.
Normalizing $D^{-1}\xi$, we obtain the quasi-invariant probability distribution in $C$:
\begin{equation}
\label{S2}
\pi_C(i) = \frac{e^{U_{\min}(i)/\varepsilon}\xi(i)}{\sum_{j\in C} e^{U_{\min}(j)/\varepsilon}\xi(j)},\quad1\le i\le q.
\end{equation}
If $\varepsilon$ is sufficiently small, the denominator in Eq. \eqref{S2} is dominated by the term(s) with the largest exponential factor which is 
$$
\max_{j\in C} U_{\min}(j) = \theta .
$$
Hence,
\begin{equation}
\label{S3}
\pi_C(i) \approx \frac{\xi(i)}{\sum_{\{j\in C~|~U_{\min}(j) = \theta\}}\xi(j)}e^{-(\theta - U_{\min}(i))/\varepsilon},\quad1\le i\le q.
\end{equation}
The escape rate from $C$ along an arc $(i\rightarrow x)$, $i\in C$, $x\notin C$, is { approximated by}
\begin{equation}
\label{S4}
\pi_C(i)L_{ix} \approx \frac{\xi(i)}{\sum_{\{j\in C~|~U_{\min}(j) = \theta\}}\xi(j)}e^{-(\theta - U_{\min}(i))/\varepsilon}\kappa_{ix}e^{-U_{ix}/\varepsilon}
\asymp e^{-(U_{ix} + \theta -U_{\min}(i))/\varepsilon}.
\end{equation}
This validates the update rule \eqref{update_rule1}.

{
\begin{remark}
Applied in the case with no symmetry, Algorithm 2  produces the same set of critical exponents and the same T-graphs as Algorithm 1,
because any cycle formed in the graph $T^{(\cdot)}$ is a closed communicating class in this case.
\end{remark}
}

\subsection{An illustrative example for Algorithm 2}
\label{sec:TexampleSym}
Now let us illustrate Algorithm 2 on an example similar to the one in Section \ref{sec:Texample} 
except for the arc weights are rounded to the nearest integers as shown in Fig. \ref{fig:symex}(Left).
The sets of the min-arcs from every vertex are shown thick black in Fig. \ref{fig:symex}(Right).
All of these arcs form the bucket $\mathcal{B}$.
\begin{figure}[htbp]
\begin{center}
\includegraphics[width=0.75\textwidth]{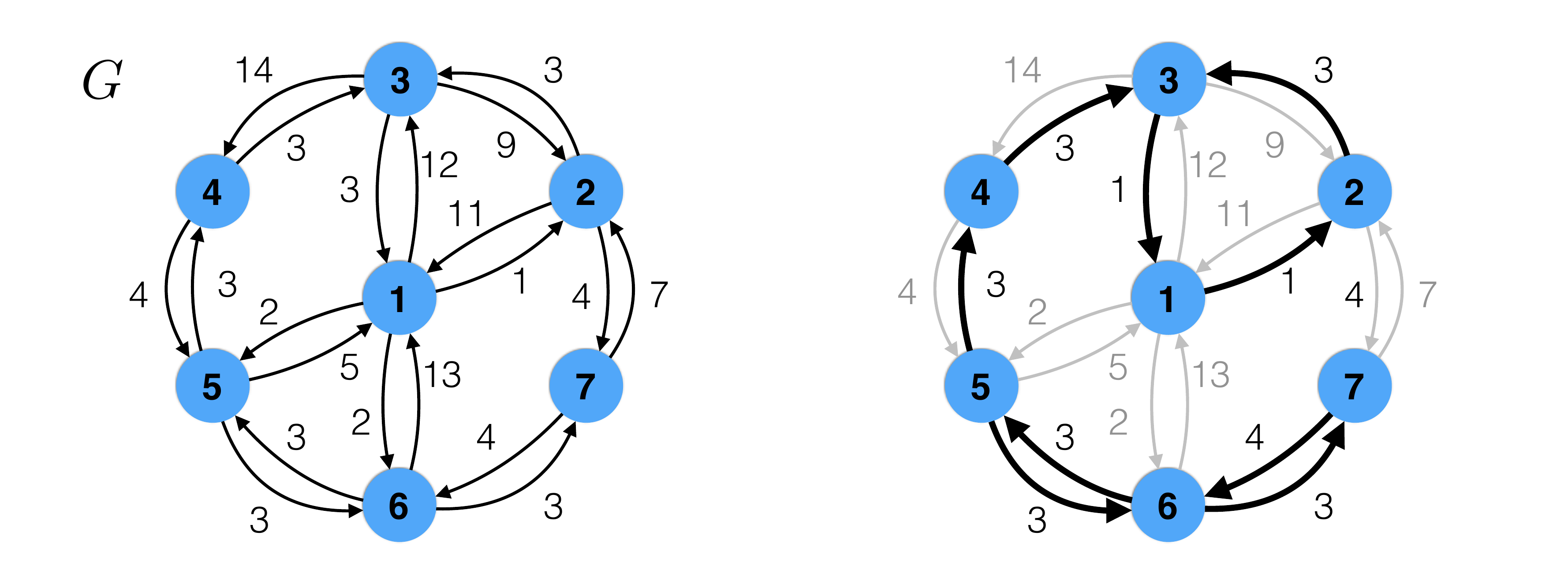}
\caption{An illustrative example for Algorithm 2. Left: The input graph $G(\mathcal{S},\mathcal{A},\mathcal{U})$.
Right: The sets of min-arcs from every vertex are shown thick black. }
\label{fig:symex}
\end{center}
\end{figure}
Then the while-cycle starts. Step $p=1$:  the set of min-arcs of weight 1 is removed from $\mathcal{B}$ 
and added to the graph $T$ forming the T-graph $T_1$ in Fig. \ref{fig:symex2} (Top Left). 
Step $p=2$:  the set of min-arcs of weight 3 is 
removed from $\mathcal{B}$ and added to $T$ resulting in the T-graph $T_2$ in Fig. \ref{fig:symex2} (Top Right). 
The vertices 5 and 6 constitute an open communicating class. Algorithm 2 does not do anything special about it.
The vertices 1, 2, and 3 constitute a closed communicating class which is contracted to a super-vertex { $\{1,2,3\}$} as shown in Fig. \ref{fig:symex2} (Bottom Left).
The modified arc weights are highlighted with bold red. Three min-arcs of weight 4 from the new super-vertex  { $\{1,2,3\}$}  are added to the bucket $\mathcal{B}$. 
Step $p=3$: the set of min-arcs of weight 4 is removed from $\mathcal{B}$ and added to $T^{(1)}$ resulting in 
the graph $T^{(1)}_3$ in Fig. \ref{fig:symex2} (Bottom, Middle). 
 It consists of a single closed communicating class that includes all vertices of $T^{(1)}_3$. 
The bucket $\mathcal{B}$ becomes empty.
The fully expanded T-graph $T_3$ is shown in Fig. \ref{fig:symex2} (Bottom, Right).
 \begin{figure}[htbp]
\begin{center}
\includegraphics[width=0.75\textwidth]{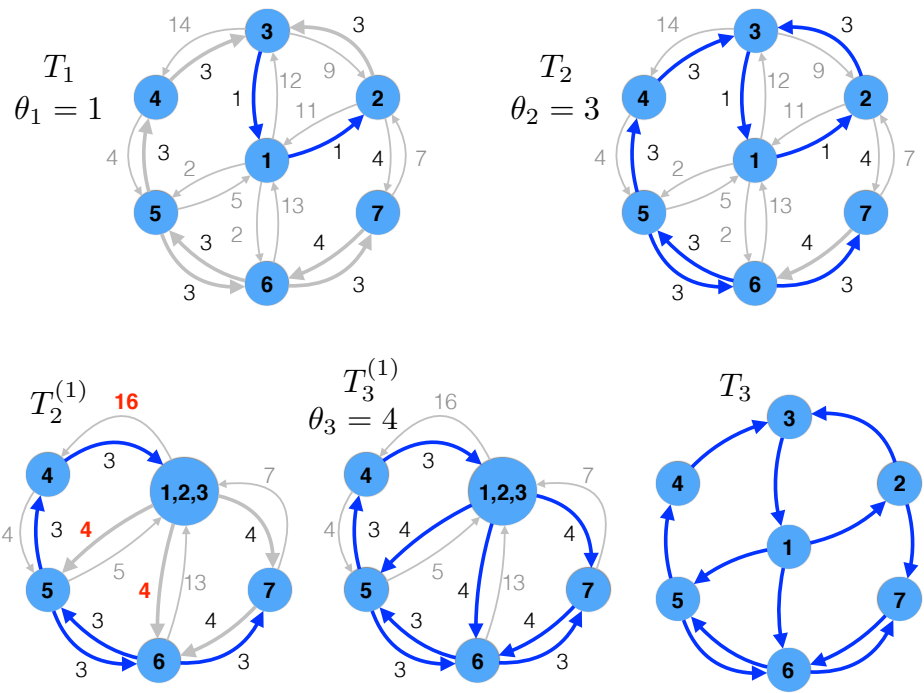}
\caption{
An illustrative example for Algorithm 2, continued. 
The arcs in $\mathcal{B}$ are shown thick grey. The modified arc weights are shown bold red.
}
\label{fig:symex2}
\end{center}
\end{figure}

\section{Interpretation of the output of Algorithm 1 in the case with symmetry}
\label{sec:sym1}
In this Section, we address the question of validity of Algorithm 1 in the case with symmetry.
Since the arc weights are modified during the run of Algorithm 1, it might be impossible to claim that there is no symmetry
before the run is complete.  
Algorithm 1 always picks a single min-arc in the case of multiple min-arcs of the same weight.
The choice of the min-arc is determined by the code 
and can seem random to a user who treats the code as a black-box. 
Using only the output of Algorithm 1, one cannot verify Assumption \ref{A5}: even if 
all numbers $\gamma_k$, $1\le k\le K$, are distinct, Assumption \ref{A5} can still fail as shown in Fig. \ref{fig:m1}(a).
Hence the verification of Assumption 5 must be embedded in the code of Algorithm 1 in order to
make sure that the found graphs are the true T-graphs.

Suppose we a running both Algorithms 1 and 2 in the case where Assumption 5, 
{ which is crucial for the validation of Algorithm 1 but irrelevant to Algorithm 2}, does not hold.  
There are two important differences between them.
\begin{itemize}
\item Algorithm 2 moves around the whole sets of min-arcs of the same weight, while Algorithm 1 moves around only one min-arc in a time.
\item Algorithm 1  contracts cycles into super-vertices 
independent of whether the cycles are closed or open communicating classes, 
while Algorithm 2 contracts only closed communicating classes.
\end{itemize}  
How should the output of Algorithm 1 be interpreted in this case? 
This question is answered in Theorem \ref{theorem:m1}
below. 
To distinguish the ``T-graphs" produced by Algorithm 1 (not necessarily satisfying Definition \ref{def:Tgraph}) from the { true } T-graphs produced by Algorithm 2, 
we will denote the former ones by $\Gamma$. { The set of numbers $\gamma_k$ produced by Algorithm 1 
is not necessarily the true set of critical exponents, but we keep the notation for simplicity.}
To distinguish the buckets $\mathcal{B}$ in Algorithms 1 and 2, we will denote them by $\mathcal{B}'$ and $\mathcal{B}$ respectively.
The graphs  $\Gamma_k$ and $T_p$  are assumed to be fully expanded. 
The recursion levels of Algorithms 1 and 2 will be denoted by $r'$ and $r$ respectively. 

\begin{theorem}
\label{theorem:m1}
\begin{enumerate}
\item 
The set of distinct numbers $\gamma_k$ produced by Algorithm 1 coincides with the set $\{\theta_p\}_{p=1}^P$ produced by Algorithm 2.
\item 
Let $K_p$ be the largest $k$ such that $\gamma _k=\theta_p$. Set $K_0 = 0$.
The graphs $\Gamma_{k}$ are subgraphs  of $T_p$ for all $K_{p-1}<k\le K_p$, $1\le p\le P$.
\item
$C$ is a closed communicating class of $T_p$ if and only if $C$ is a 
closed communicating class of $\Gamma_{K_p}$.
\item
State $i$ is an absorbing state of $T_p$ if and only if it is an absorbing state of $\Gamma_{K_p}$.
\end{enumerate}
\end{theorem}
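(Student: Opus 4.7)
The plan is to couple the executions of Algorithm 1 and Algorithm 2 and proceed by induction on $p$. The core observation is that the exponent update rule \eqref{update_rules} in Algorithm 1 coincides with \eqref{update_rule1} in Algorithm 2: in the symmetric case a cycle closed by Algorithm 1 has its last added arc of weight $\gamma_{last}$ equal to the current batch weight $\theta_p$ being processed by Algorithm 2, and for every tail $i$ of an outgoing arc from the cycle one has $U_{\mu(i)}=U_{\min}(i)$. Consequently, each super-vertex obtained by Algorithm 2 via contracting a nontrivial closed communicating class $C$ at step $p$ is built by Algorithm 1 through a sequence of cycle-contractions whose total vertex set is $C$, and the outgoing arcs from the resulting super-vertices, together with their modified weights, agree between the two algorithms.

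Under this coupling, Part 1 follows because a weight $\theta$ appears as some $\gamma_k$ precisely when some min-arc of modified weight $\theta$ enters $\mathcal{B}'$, which occurs in the exact same situations as for $\mathcal{B}$ in Algorithm 2. Part 2 then follows by induction on $k$: if $\Gamma_{k-1}\subseteq T_p$, then at step $k$ Algorithm 1 transfers one min-arc of weight $\gamma_k=\theta_p$ from $\mathcal{B}'$, and by the coupling this arc also lies in $T_p$, so $\Gamma_k\subseteq T_p$.

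For Parts 3 and 4 I would show that, at the checkpoint $k=K_p$, the closed communicating classes and absorbing states of $\Gamma_{K_p}$ and $T_p$ coincide as vertex sets after full expansion. Each nontrivial closed communicating class of $T_p$ is the full expansion of some super-vertex formed by Algorithm 2 on or before step $p$; by the coupling the same vertex set is the expansion of a super-vertex in Algorithm 1, and its internal structure has been assembled inside $\Gamma_{K_p}$. Conversely, a closed communicating class of $\Gamma_{K_p}$ corresponds to an Algorithm-1 super-vertex that matches an Algorithm-2 super-vertex contracted by step $p$; any arc that would leave it in $T_p$ has modified weight exceeding $\theta_p$ and hence is not present in $T_p$ yet. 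Part 4 is the singleton special case of Part 3.

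The hardest point will be making this coupling precise in the presence of Algorithm 1's tie-breaking freedom. One must show that every weight-$\theta_p$ arc in $T_p$ that is absent from $\Gamma_{K_p}$ is redundant for the closed-class and absorbing-state structure, by controlling the interaction between Algorithm 1's one-at-a-time, cycle-driven recursion and Algorithm 2's batched, closed-class-driven recursion. I expect a double induction on the recursion depth $r$ and the batch index $p$ to be the cleanest way to organize the case analysis.
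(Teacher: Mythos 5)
Your outline is, in structure, the proof the paper gives: couple the two runs, exploit the fact that the exponent update \eqref{update_rules} reduces to \eqref{update_rule1}, and argue by an interlocking induction that Algorithm 1 assembles each closed communicating class contracted by Algorithm 2 through a nested sequence of cycle contractions with matching exit weights. The problem is that your first paragraph \emph{asserts} exactly the two facts that constitute the content of the theorem, and your last paragraph concedes they are not established: (a) a cycle contracted by Algorithm 1 whose vertex set is only a subclass of an \emph{open} communicating class of $T_p$ does not survive as a closed class of $\Gamma_{K_p}$, and (b) when Algorithm 2 contracts a closed class $C$, Algorithm 1's nested contractions really do exhaust $C$ and produce the same outgoing min-arc weight, so the two buckets stay synchronized. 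There is also a circularity in how you get Part 1: you derive the equality of the distinct $\gamma_k$'s and the $\theta_p$'s from the coupling, but the coupling itself needs the bucket agreement; this must be broken by carrying the bucket statement ($\mathcal{B}'\subseteq\mathcal{B}$ with the same set of distinct weights at the checkpoints) inside the induction hypothesis, as the paper does with its auxiliary statement.

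Concretely, closing (a) and (b) requires the ingredients the paper isolates as lemmas. First, a quantitative statement about the update rule: after contracting a cycle at weight $\gamma_k$, every outgoing arc satisfies $U^{new}_{ij}\ge\gamma_k$ with equality if and only if $i\rightarrow j$ was a min-arc from $i$ (Lemma \ref{lemma:m1}), together with its telescoping version for nested contractions (Corollaries \ref{cor2} and \ref{cor3}); this is what guarantees in case (a) that an exit arc of modified weight exactly $\theta_p$ enters $\mathcal{B}'$ and is consumed before step $K_p$ ends, and in case (b) that the final exit weight from the fully assembled super-vertex is $\min_{i\in C,\,j\notin C}\bigl[U_{ij}-U_{\min}(i)+\theta_p\bigr]$, i.e. the same value Algorithm 2 puts into $\mathcal{B}$. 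Second, a purely graph-theoretic lemma (Lemma \ref{lemma:m2}) that if every vertex of a set has an outgoing min-arc and all of them head back into the set, a directed cycle must form; the paper iterates this at most $|C|-1$ times to show that Algorithm 1's cycle contractions cannot stall before exhausting $C$. With these pieces your double induction (the paper's runs over the recursion level of Algorithm 2, with an inner induction on the cycles formed within one batch) goes through, and Part 4 indeed falls out directly by comparing min-arc weights with $\theta_p$; without them, the proposal remains a statement of the intended coupling rather than a proof of it.
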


The proof of Theorem \ref{theorem:m1} is conducted by induction in the recursion level $r$ in Algorithm 2.
It is found in Appendix \ref{App_B}. 

Theorem \ref{theorem:m1} is illustrated in Figs. \ref{fig:Alg2Sym} and \ref{fig:Alg1Sym}.
Algorithms 1 and 2 are applied to the same Markov chain with symmetry. 
Let us compare the T-graphs $T_1$, $T_2$, $T_3$, and $T_4$
 in Fig. \ref{fig:Alg2Sym}
with the graphs $\Gamma_{K_1}=\Gamma_3$, $\Gamma_{K_2} = \Gamma_5$, $\Gamma_{K_3} = \Gamma_7$ and $\Gamma_{K_4}=\Gamma_8$
in Fig. \ref{fig:Alg1Sym} 
respectively. 
We observe that the absorbing states highlighted with lime-green and closed communicating classes  highlighted
with turquoise-blue coincide 
in the corresponding graphs. However, some arcs might be missing in the $\Gamma_{K_p}$ graphs in comparison with the 
corresponding $T_p$ graphs. As a result, $(i)$  the $\Gamma_{K_p}$ graphs might describe the dynamics in the closed communicating classes incompletely,
and $(ii)$ the $\Gamma_{K_p}$  might fail to predict accurately to which recurrent states the process goes if it starts at a transient state.
For example, the arc $3\rightarrow 4$ is missing in the closed communicating classes in $\Gamma_7$ and $\Gamma_8$.
State 4, a transient state on the timescale $e^{1/\varepsilon}$, is not connected to  the absorbing state 3 in $\Gamma_3$ as it is in the T-graph $T_1$.
Furthermore, some arcs might acquire unphysical weights due to the contraction of cycles by Algorithm 1 that are not closed communicating classes.
For example, the arcs $4\rightarrow 5$ and $4\rightarrow 3$ are both min-arcs from state 4 and their weights are 1 in $T_p$, $p=1,2,3,4$. This means that a Markov 
process starting at state 4 proceeds to states 3 or 5 with equal probabilities. However, 
the arc $4\rightarrow 3$ acquires weight 3 while the arc $4\rightarrow 5$ keeps its weight 1 in $\Gamma_7^{(1)}$. 
\begin{figure}[htbp]
\begin{center}
\includegraphics[width=0.75\textwidth]{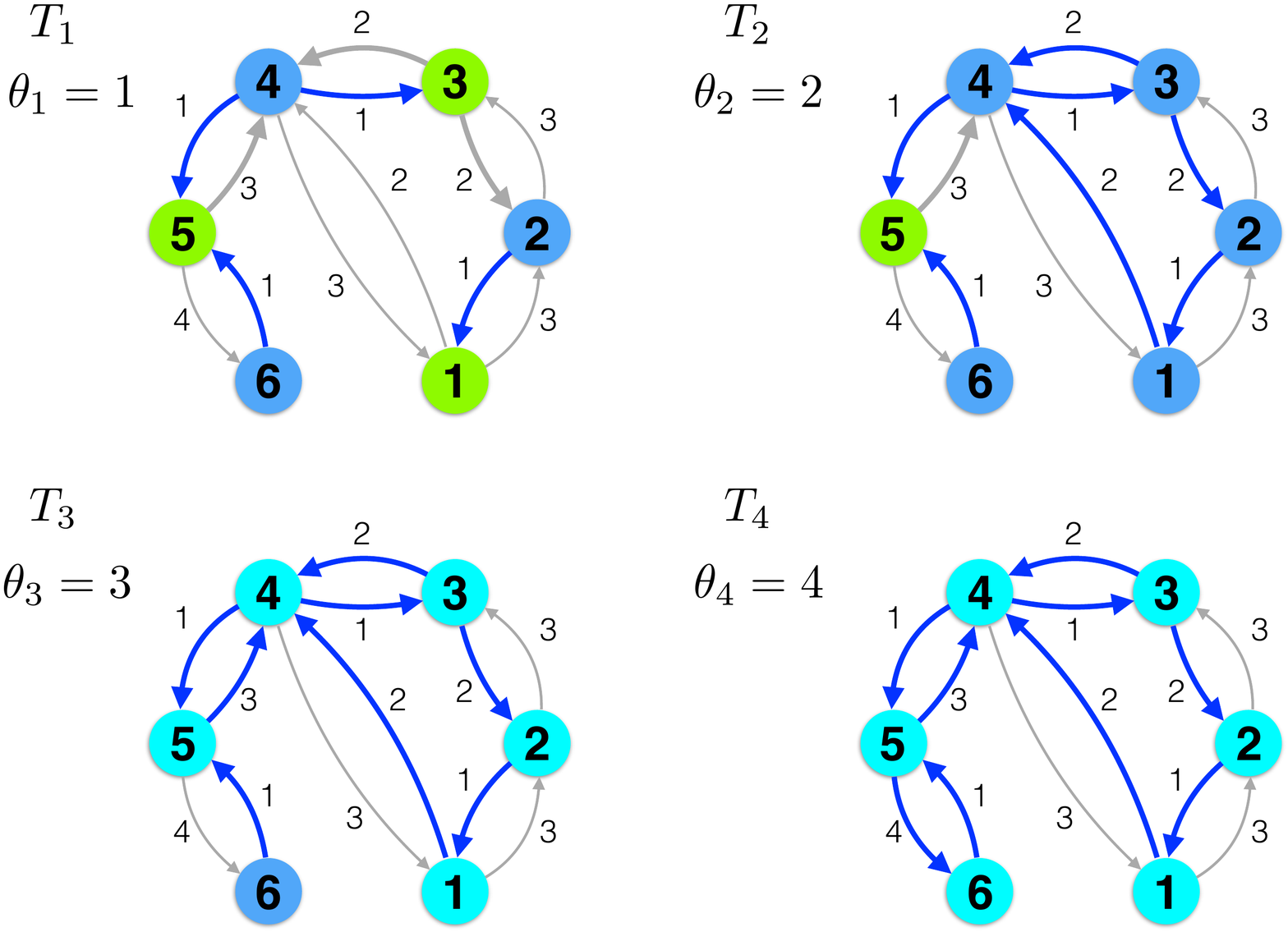}
\caption{An illustration to Theorem \ref{theorem:m1}. The T-graphs and the numbers $\theta$ produced by Algorithm 2.
Absorbing states are highlighted with lime-green, while closed communicating classes are highlighted with turquoise-blue.}
\label{fig:Alg2Sym}
\end{center}
\end{figure}
\begin{figure}[htbp]
\begin{center}
\includegraphics[width=0.75\textwidth]{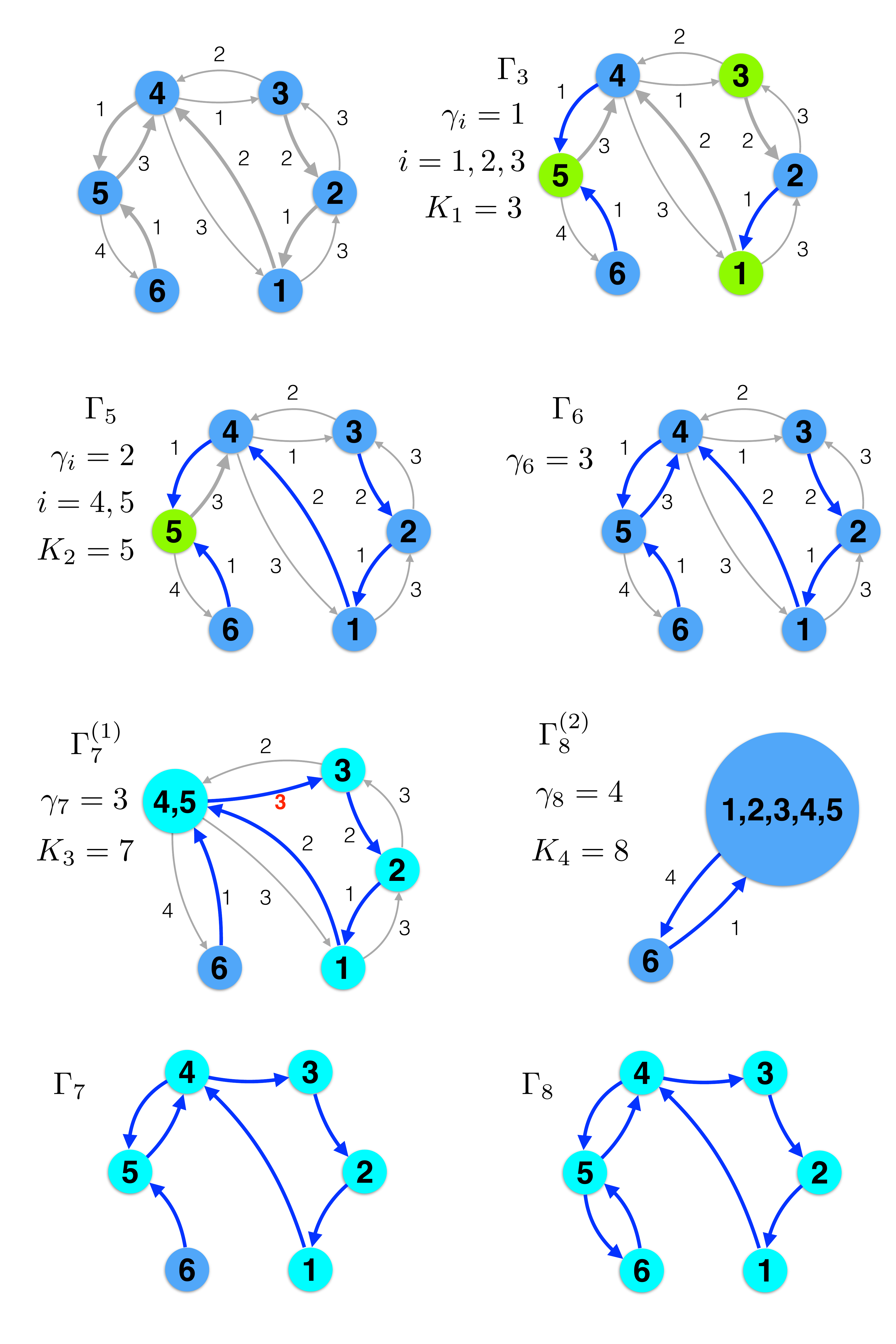}
\caption{ An illustration to Theorem \ref{theorem:m1}. The graphs $\Gamma$ and the numbers $\gamma$ produced by Algorithm 1.
Absorbing states are highlighted with lime-green, while closed communicating classes are highlighted with turquoise-blue.}
\label{fig:Alg1Sym}
\end{center}
\end{figure}


\section{A real world inspired example:  walks of molecular motors}
\label{sec:MM}
In this Section, we will demonstrate the relevance of the time-irreversible and  symmetric Markov chains with 
pairwise rates of the order of $\exp(-U_{ij}/\varepsilon)$ to the real world. The example considered is based on
Astumian's work \cite{Astumian} on molecular motors. 

Molecular motors are molecules that are capable of ``walking" on a substrate 
by converting chemical free energy (often provided by the ATP hydrolysis)
into work. 
The sequence of conformational changes of a molecular  motor can be described as a random walk.
At chemical and thermal equilibrium, transition rates from a conformation $i$ to another conformation $j$ would be 
of the form
$ k_{ij} = A \exp \left(  -(F_{ij} - F_{i})/ (k_B T)  \right) $,
where $F_i$ and $F_{ij}$ are the free energies at state $i$ and the barrier separating $i$ and $j$ { respectively}, 
$T$ is the absolute temperature, and $k_B$ 
is the Boltzmann constant. In this case, the corresponding Markov chain is time-reversible, and biased motion is impossible. 
However, when the chemical reaction (ATP hydrolysis) is no longer at chemical equilibrium due to the excess of ATP,  
the detailed balance (the time-reversibility of the Markov chain) is lost, and biased motion { may arise.}

\begin{figure}[htbp]
\begin{center}
\includegraphics[width=0.5\textwidth]{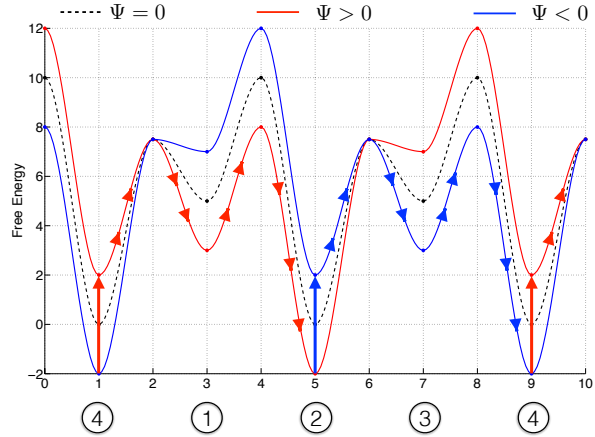}
\caption{The time-dependent free energy landscape of a two-headed molecular motor.
The forward motion, shown with the arrows, occurs a certain range of values of the rate of switching between $\Psi$ and $-\Psi$. }
\label{MMotor1}
\end{center}
\end{figure}

We set up an example based on the dynamics of kinesin, a biomolecular motor, moving on a microtubule, described in \cite{Astumian},
and analyze it by means of  Algorithm 2. Kinesin is a polar protein in the sense that it has distinguishable ends, ``front" and ``back",  that allows us
to identify its forward and backward motion.
Kinesin has two heads, left and right, and moves on its track in a walking manner, that can be viewed as a random walk in a four-state space \cite{Astumian}: 
$1 = $ \{right head front, left head back\}; $2 = $ \{right head attached, left head free\}; $3 =$ \{right head back, left head front\}; $4 =$ \{right head free, left head attached\}.
The possible transitions are $i\rightarrow i+1$, $i=1,2,3$, and $4\rightarrow 1$,  as well as $i\rightarrow i-1$, $i=2,3,4$, and $1\rightarrow 4$.
Cycling through the states in the order $1 \rightarrow 2 \rightarrow 3 \rightarrow 4 \rightarrow 1$ 
leads to a step forward, 
while cycling in the order $1 \rightarrow 4 \rightarrow 3 \rightarrow 2 \rightarrow 1$ leads to a step backward.
Of course, forward or backward motion can occur only if the Markov chain is time-irreversible, that is achieved owing to
the ATP hydrolysis. In the model proposed in \cite{Astumian}, 
its effect boils down to the introduction of the function $\psi(t)$ switching stochastically at rate $e^{-\zeta/\varepsilon}$ between two values, $+\Psi$ and $-\Psi$, 
corresponding to two possible chemical states.
As a result, the free energy  landscape becomes time-dependent as shown in Fig. \ref{MMotor1}.
The dashed black curve corresponds to $\Psi =0$. The red and blue curves correspond to $\psi(t) = \Psi$ and $\psi(t)=-\Psi$ respectively.
For a certain range of  $\zeta$, the forward motion, shown with arrows in Fig. \ref{MMotor1}, will occur. 
We made up the free energy landscapes in Fig. \ref{MMotor1} to mimic the shapes of the graphs in  Fig. 4 in \cite{Astumian}
and to provide us with the input data for Algorithm 2. We picked  $\Psi=2$ and the following values of free energies:
$$
F_1=F_3 = 5,\quad F_2=F_4 = 0,\quad F_{12} = F_{21}=F_{34} =F_{43}= 10,\quad F_{41} =F_{14} = F_{32}= F_{23} = 7.5, 
$$
As in \cite{Astumian}, the transition rates between states 1 and 4  and states 2 and 3 are affected by the chemical state while
the ones between state 3 and 4  and states 1 and 2 are not: 
\begin{align*}
L_{41}& = e^{-(F_{41}-F_4-\psi(t))/\varepsilon},\quad L_{14} = e^{-(F_{41}-F_1+\psi(t))/\varepsilon},\\
L_{23}& = e^{-(F_{23}-F_2+\psi(t))/\varepsilon},\quad L_{32} = e^{-(F_{23}-F_3-\psi(t))/\varepsilon},\\
L_{34}& = e^{-(F_{34}-F_3)/\varepsilon},\quad L_{43} = e^{-(F_{34}-F_4)/\varepsilon},\\
L_{12}& = e^{-(F_{12}-F_1)/\varepsilon},\quad L_{21} = e^{-(F_{21}-F_2)/\varepsilon}.
\end{align*}

To distinguish between two chemical states corresponding to $+\Psi$ and $-\Psi$, we double the state-space to
 $\{1_+, 2_+, 3_+, 4_+, 1_-, 2_-, 3_-, 4_- \}$. The subscripts $'+'$ and $'-'$ correspond to $+\Psi$ and $-\Psi$ respectively.
We assume that the switch between the two chemical states $+\Psi$ and $-\Psi$ is a Poisson process with rate
$e^{-\zeta/\varepsilon}$. The resulting Markov chain is depicted in Fig. \ref{Mchain} (a).

Using Algorithm 2, we analyze the dynamics of the molecular {motor} in the limit $\varepsilon\rightarrow 0$ for $0<\zeta<\infty$.
The stopping criterion in Algorithm 2 is chosen to be ``stop once there is a closed communicating class 
containing at least one state out of  $\{1_{+},1_{-}\}$ and at least one state out of $\{3_{+},3_{-}\}$".
The physical meaning of this criterion is ``stop as soon as the most likely way to switch from 
\{right head front, left head back\} to \{right head back, left head front\} and back is found.
The T-graphs corresponding to the largest timescale achieved by Algorithm 2 before its termination will
be referred to as the ``final T-graphs" for brevity.

It is evident from our analysis that the most likely switching process between the states 
\{right head front, left head back\} and \{right head back, left head front\} and back, described by the final T-graphs,
undergoes qualitative changes at the following set of critical values of $\zeta$: 0.5, 4.5, 5.0, 5.5, 6.0, 9.5, and 10.0.
The application of Algorithm 2 for $ 6<\zeta<9.5$ is sketched in Figs. \ref{Mchain}(b-i).
The final T-graphs for all intervals 
of $\zeta$ bounded by its critical values are shown in Fig. \ref{fig:step}.

If $\zeta<0.5$, the molecular motor randomly steps forward and backward on the timescale $e^{10/\varepsilon}$. Its expected displacement is zero. 
If $0.5<\zeta<4.5$, the  motor walks  forward, and the steps occur on the timescale $e^{(10.5-\zeta)/\varepsilon}$ which is less that $e^{10/\varepsilon}$
but greater than $e^{6/\varepsilon}$.
The interval $4.5<\zeta<6.0$ is the sweet spot: the  motor walks forward on  the minimal possible timescale $e^{6/\varepsilon}$.
Some qualitative changes in the walking style, { indicated by} the corresponding  
final T-graphs (see  Fig. \ref{fig:step}), occur at $\zeta=5.0$ and $\zeta = 5.5$, but they do not affect the stepping rate.
If $6.0<\zeta<10.0$, the motor walks forward at the increasing timescale  $e^{\zeta/\varepsilon}$. A qualitative change in the walking style occurs at $\zeta=9.5$.
If $\zeta>10.0$, each step forward is followed by a step backward, and each step backward is followed by a step forward.
Steps occur on the timescale  $e^{10/\varepsilon}$. The transitions between the chemical states occur on the larger timescale $e^{\zeta/\varepsilon}$, 
and they do not help the motor to walk.
In summary, the motor walks forward if $0.5<\zeta<10$, and it does so at the minimal possible timescale $e^{6/\varepsilon}$ 
{ if} $4.5<\zeta<6.0$.

 \begin{figure}[htbp]
\begin{center}
\centerline{
(a)\includegraphics[width = 0.33\textwidth]{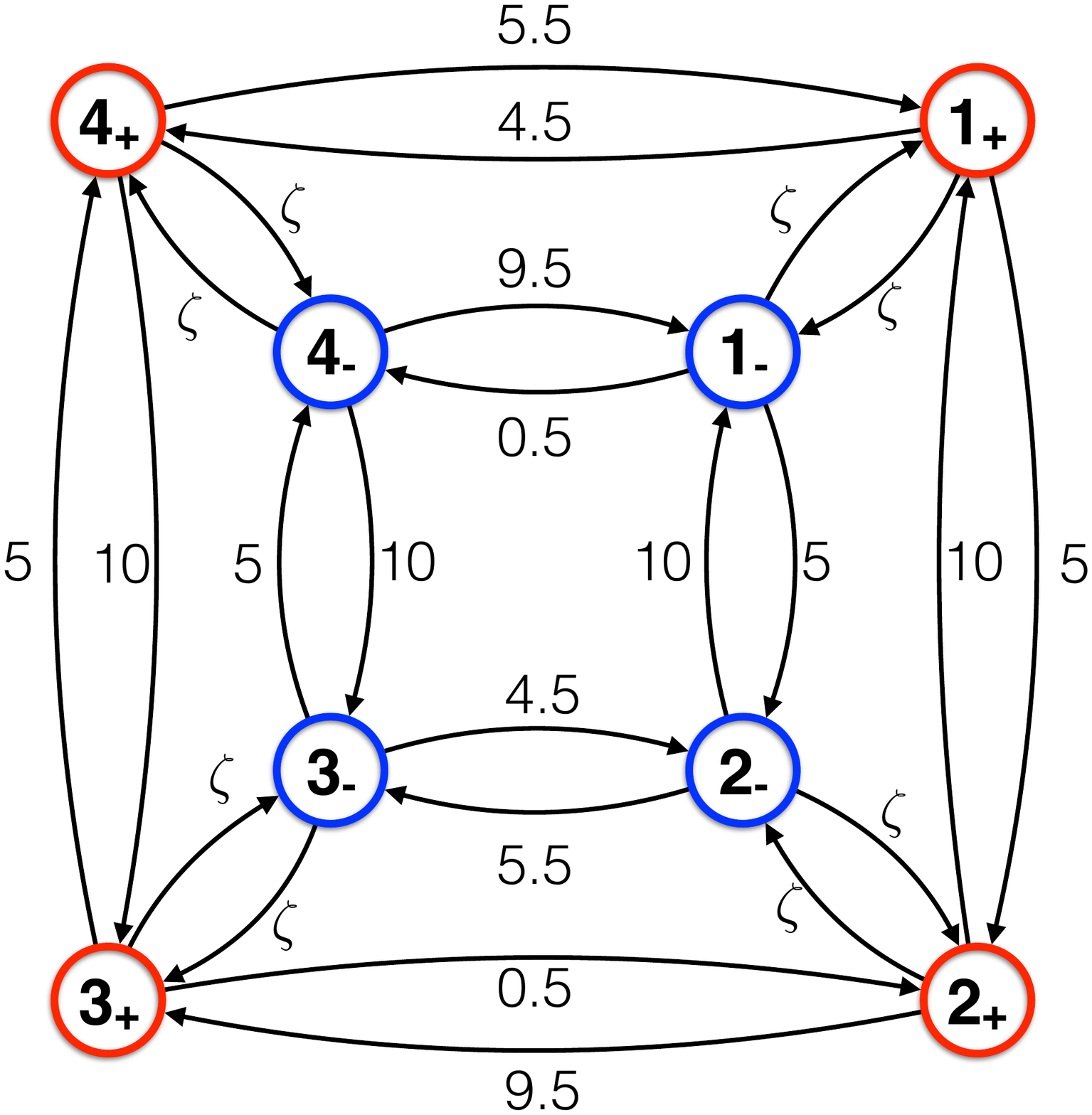}
(b)\includegraphics[width = 0.33\textwidth]{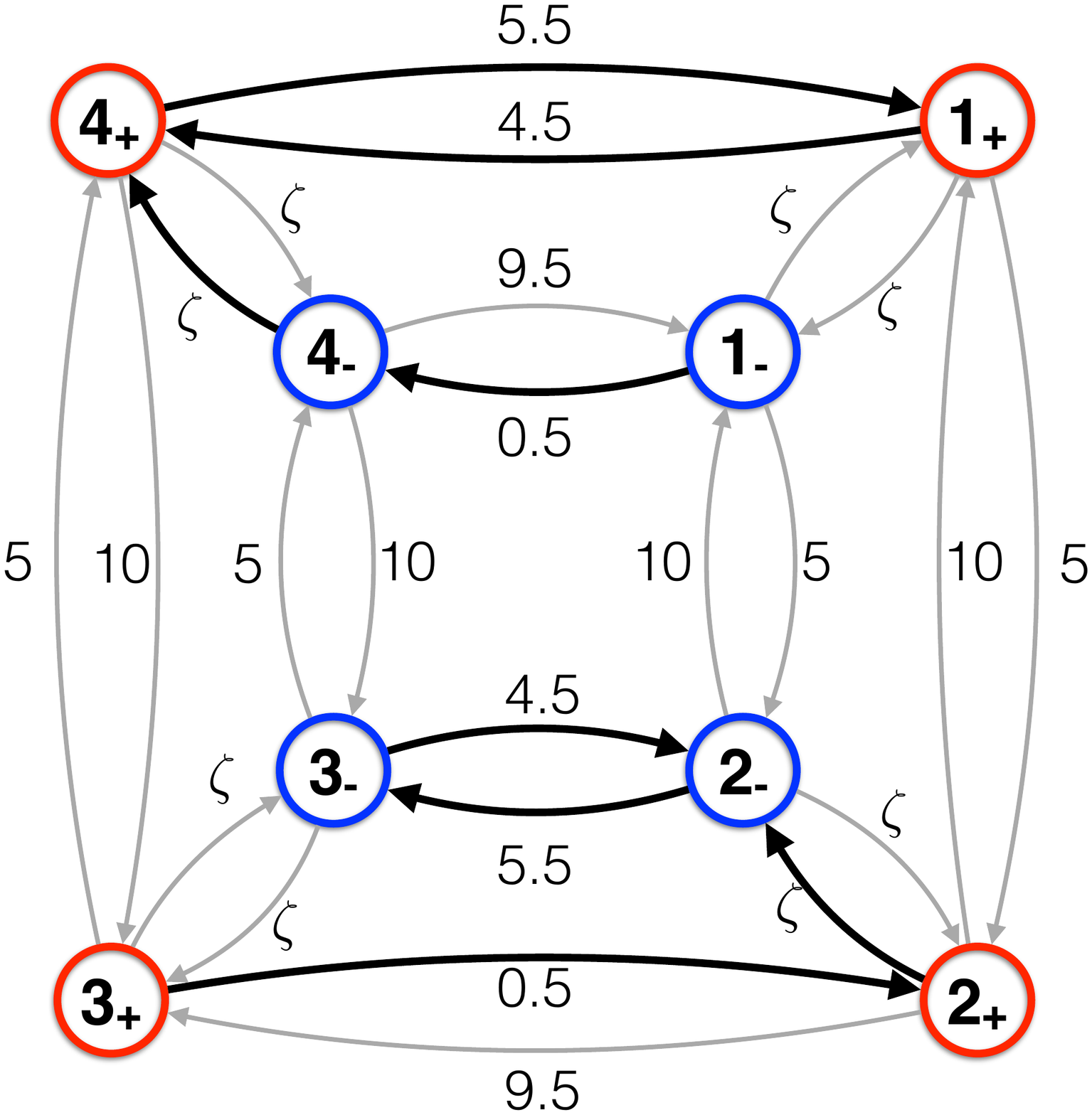}
(c)\includegraphics[width = 0.33\textwidth]{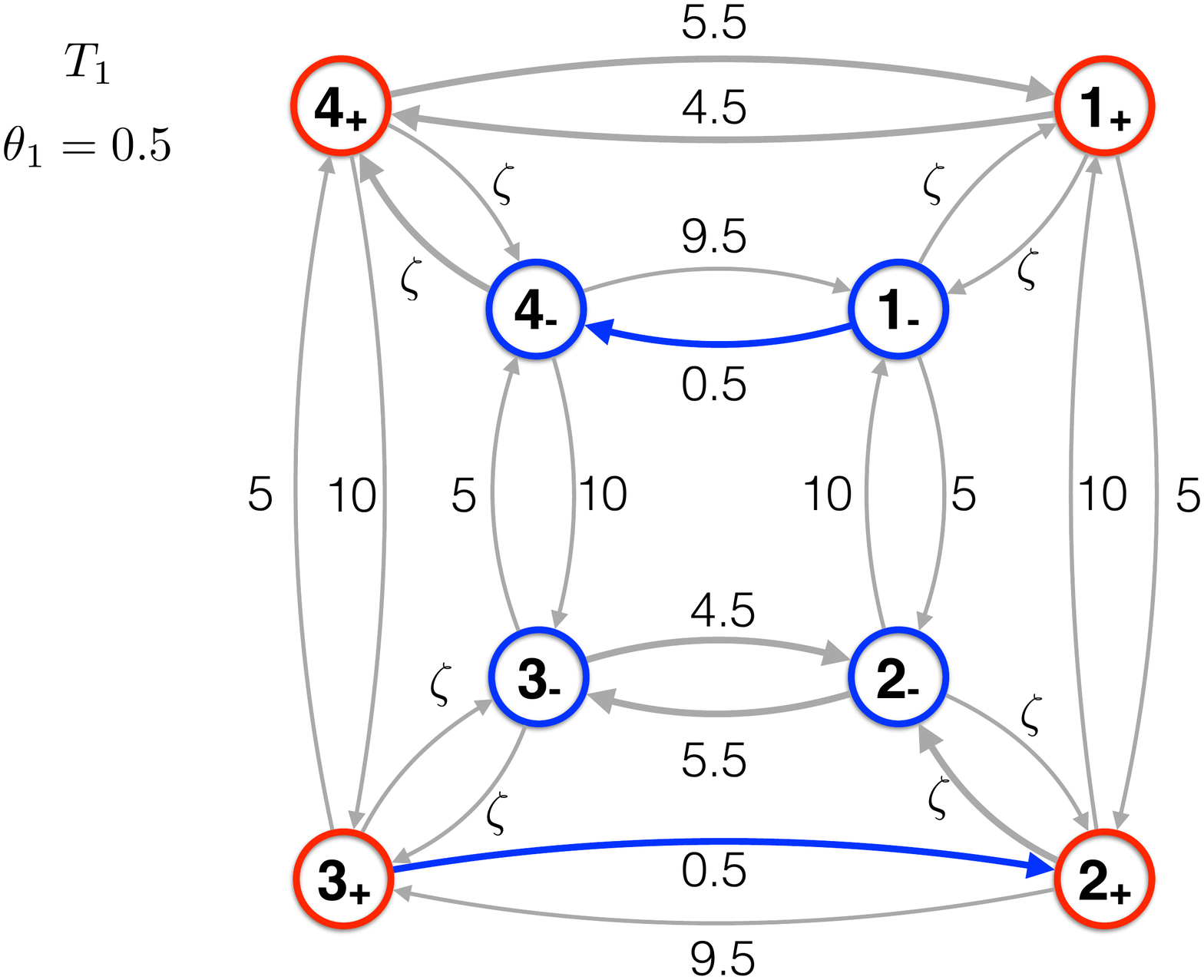}
}
\centerline{
(d)\includegraphics[width = 0.33\textwidth]{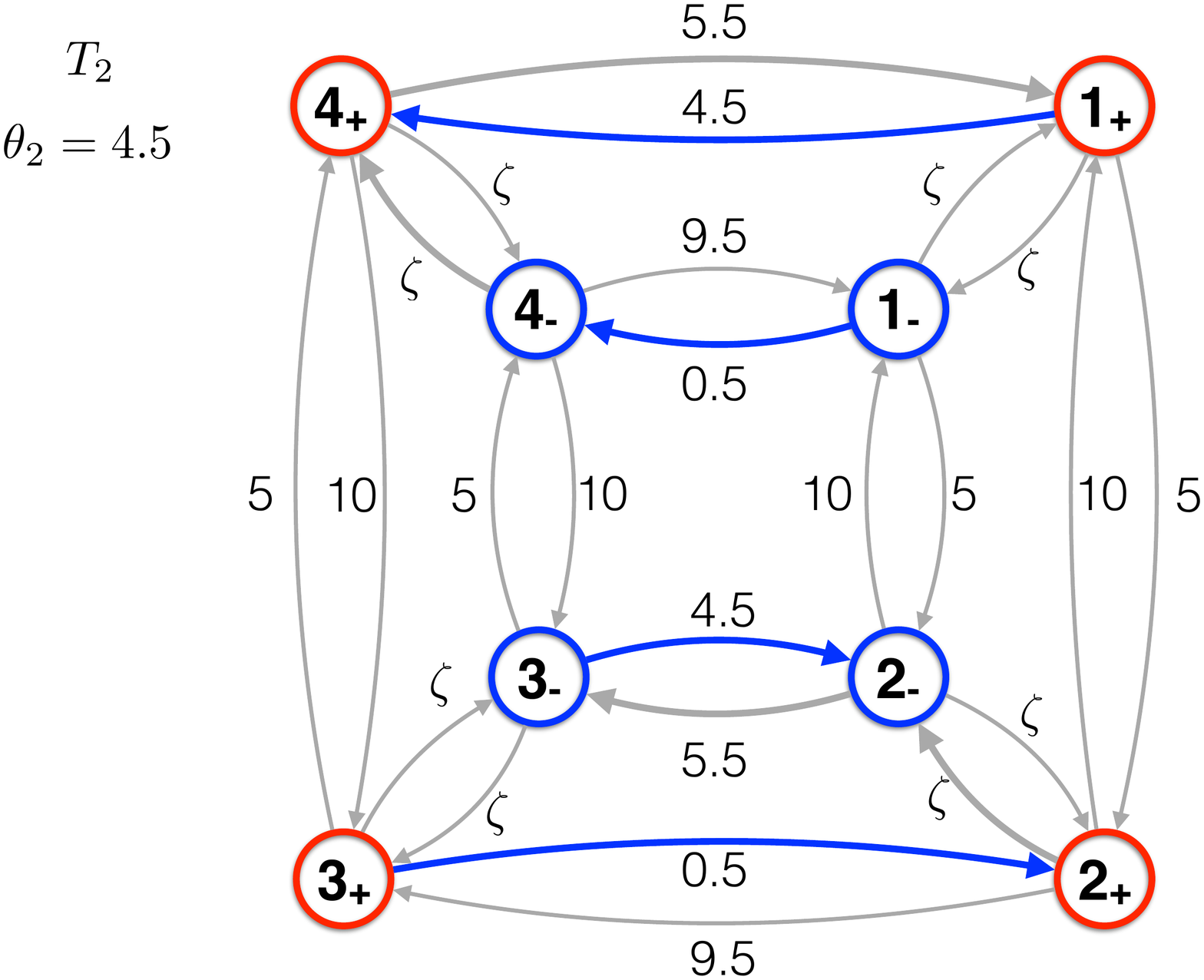}
(e)\includegraphics[width = 0.33\textwidth]{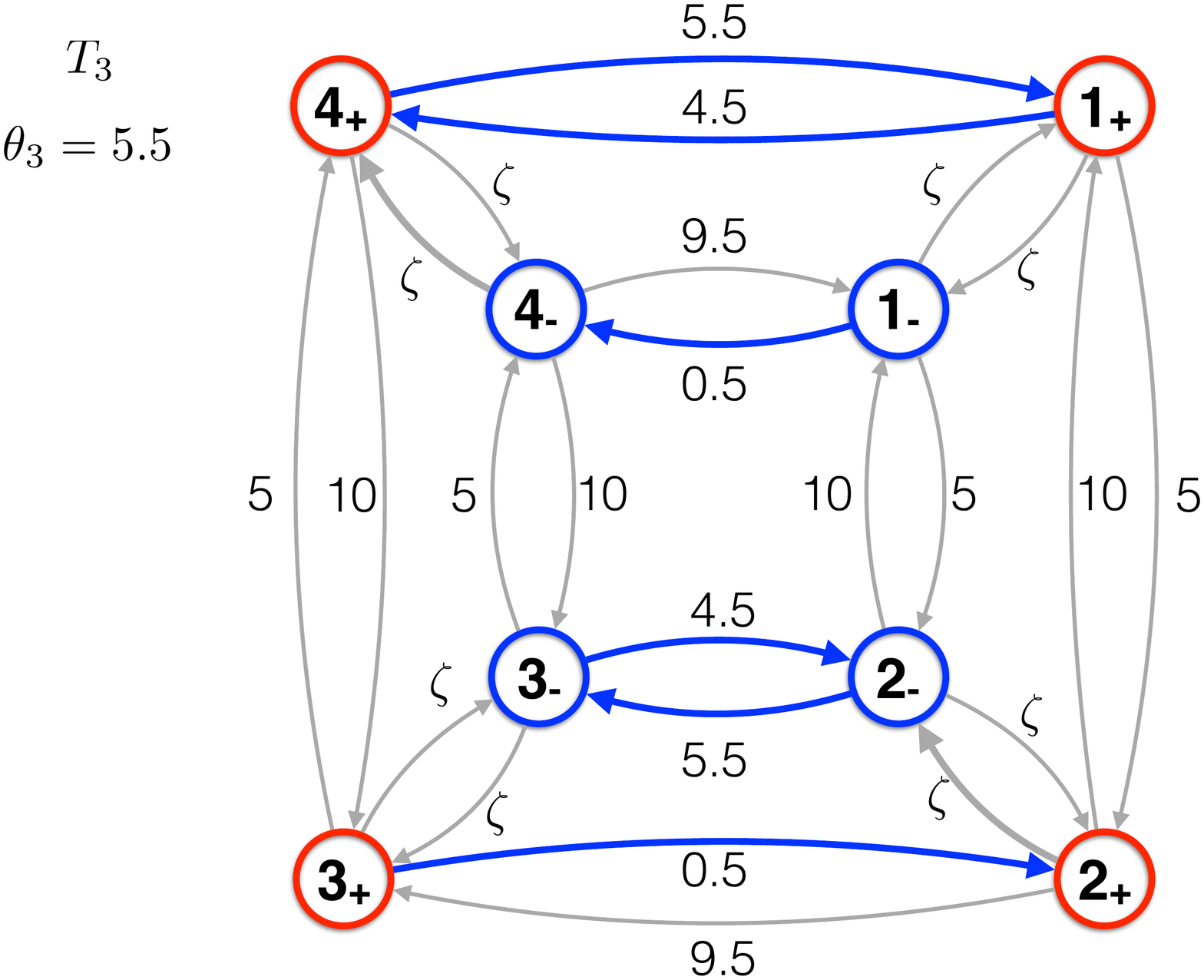}
(f)\includegraphics[width = 0.33\textwidth]{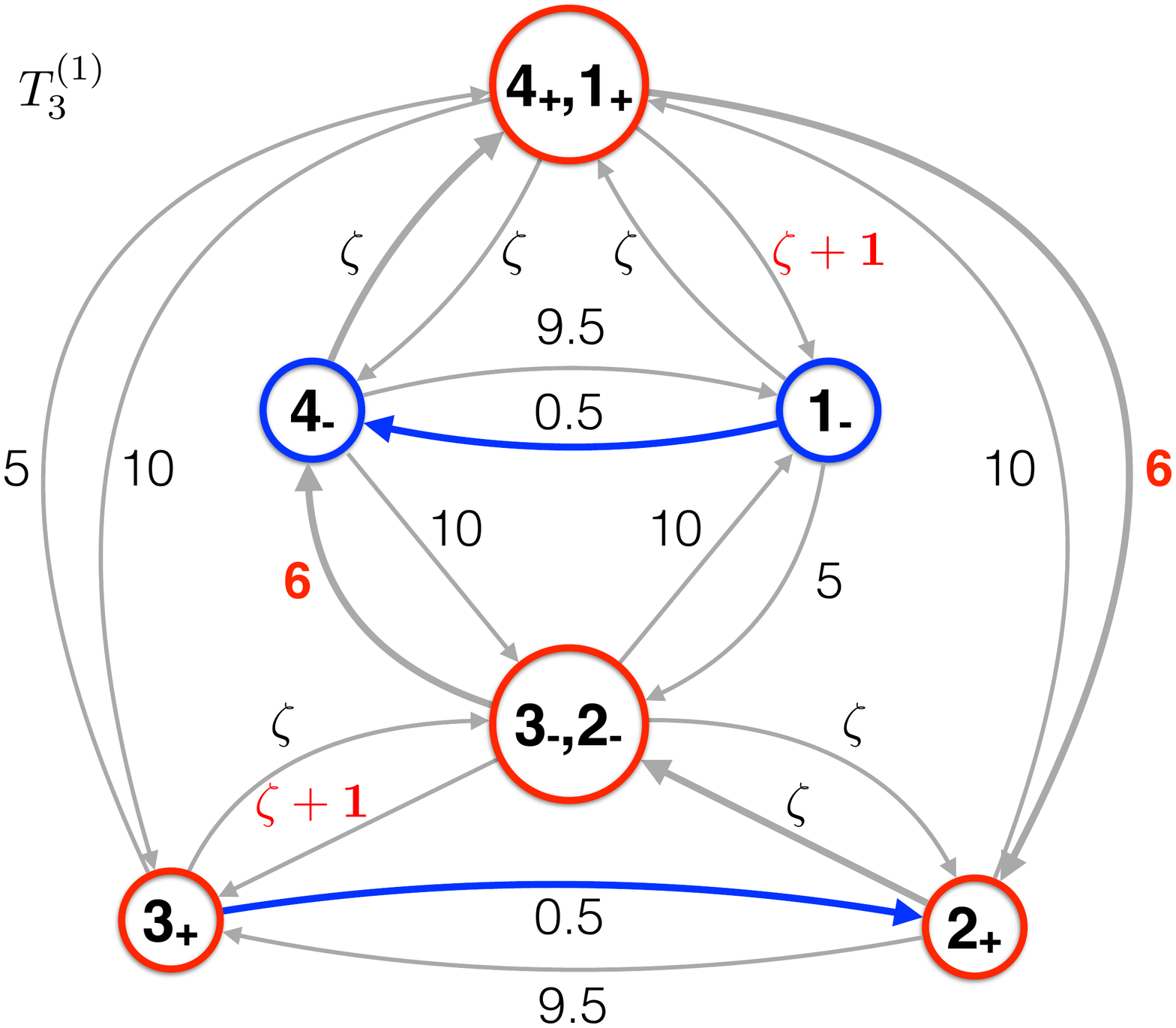}
}
\centerline{
(g)\includegraphics[width = 0.33\textwidth]{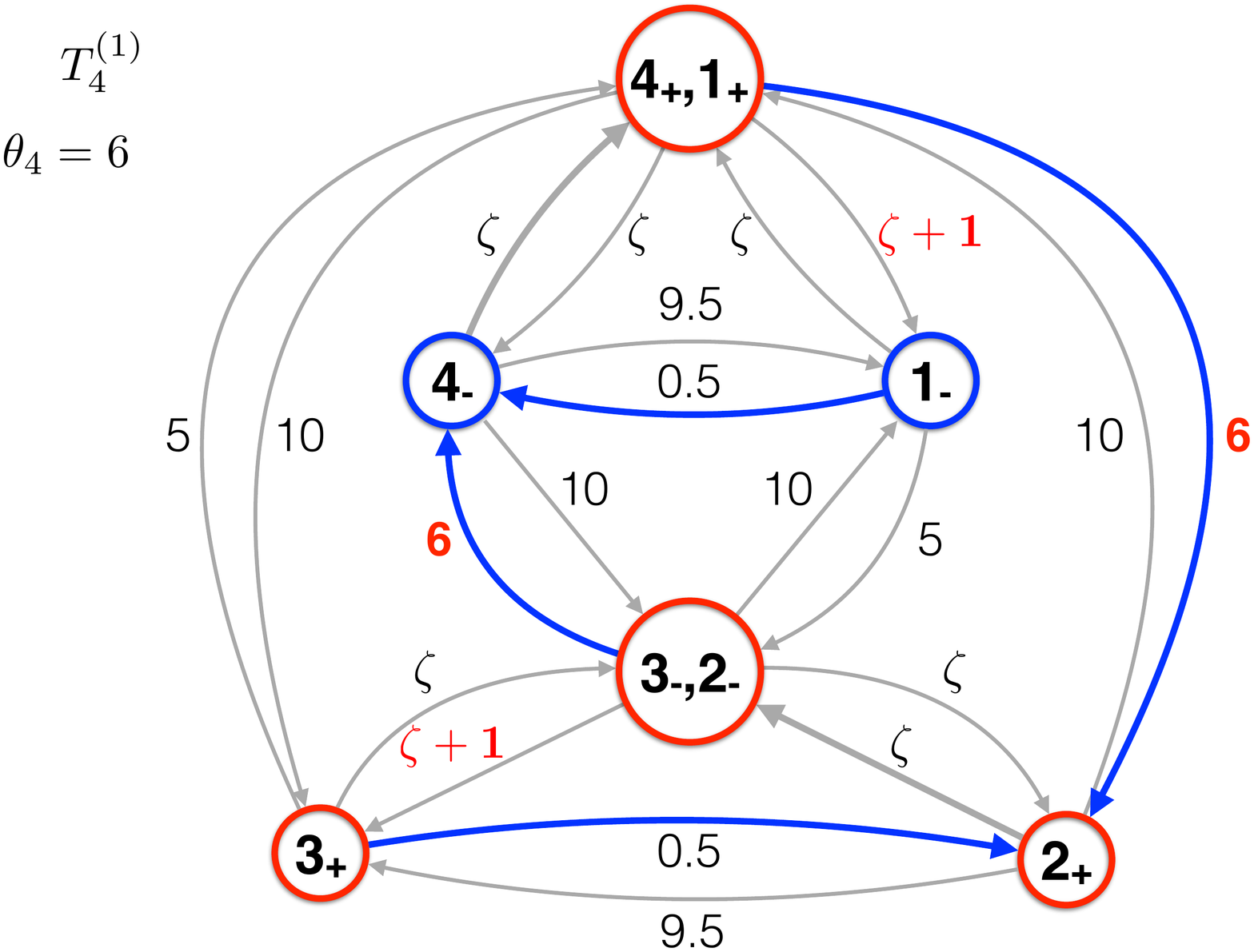}
(h)\includegraphics[width = 0.33\textwidth]{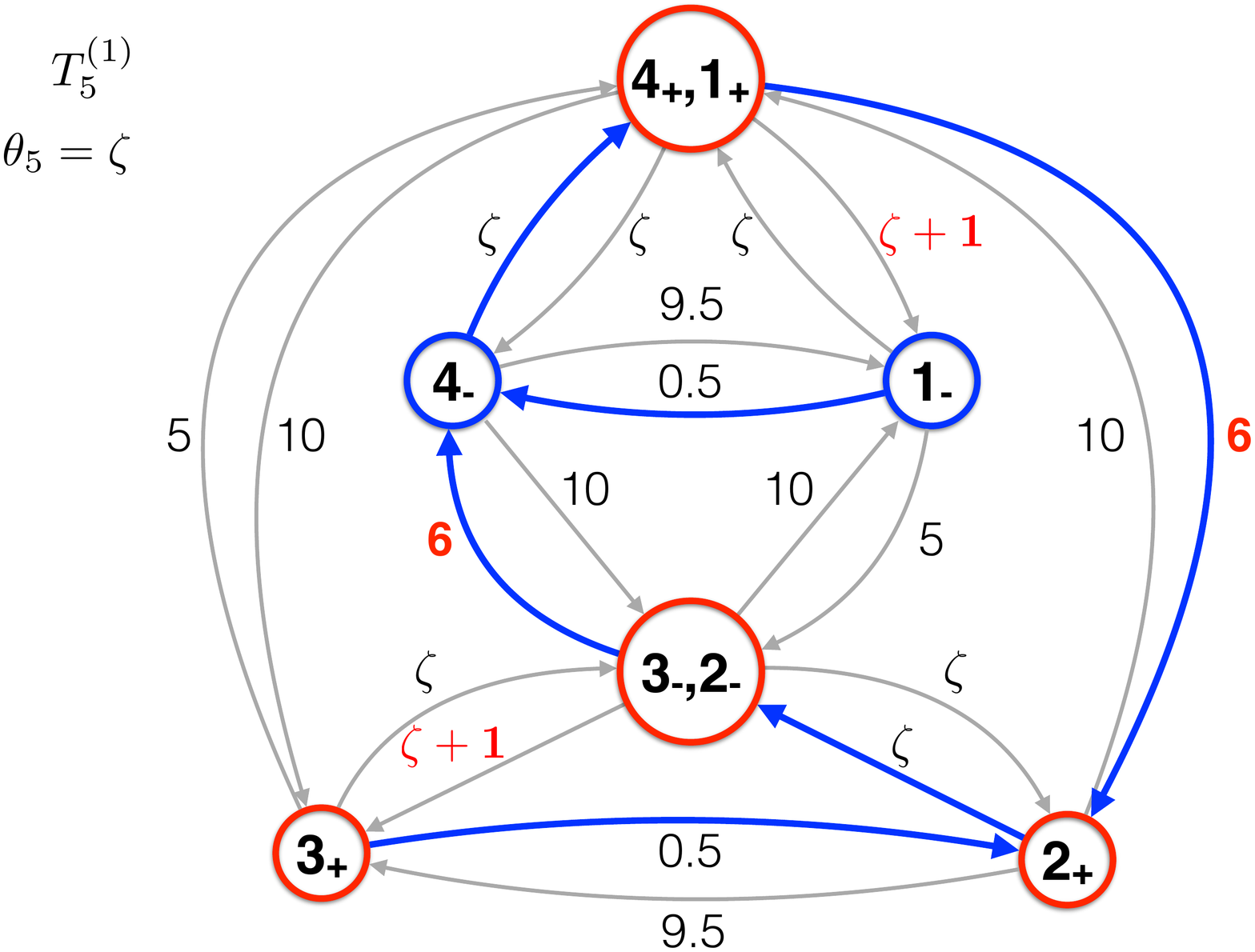}
(i)\includegraphics[width = 0.33\textwidth]{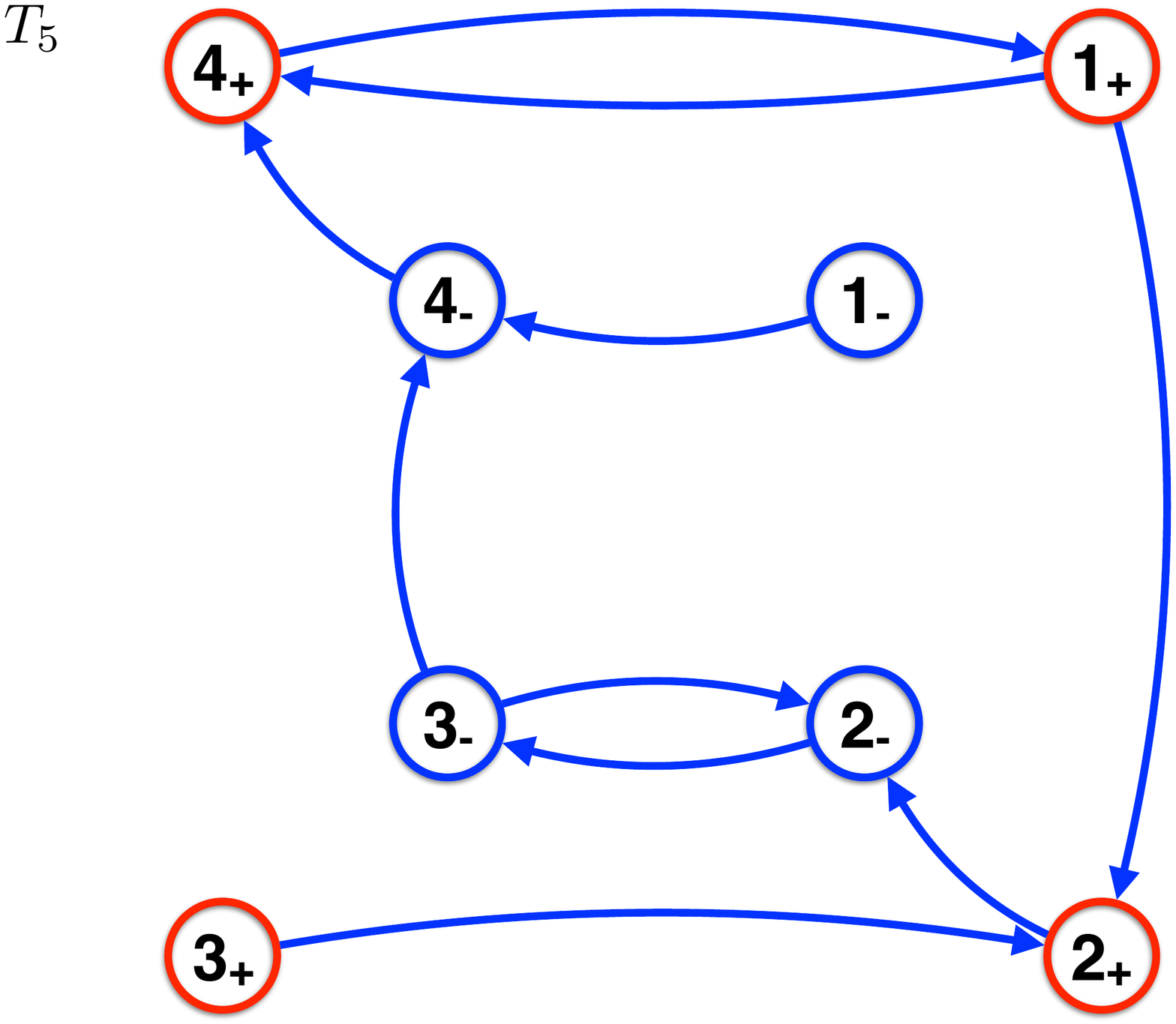}
}
\caption{
An application of Algorithm 2 to the analysis of the dynamics of a two-headed molecular motor with distinguishable ``front" and ``back".
Algorithm 2 terminates as soon as  a closed communicating class 
containing at least one state out of  $\{1_{+},1_{-}\}$ and at least one state out of $\{3_{+},3_{-}\}$ is found.
(a): The Markov chain describing the dynamics of { the} two-headed molecular motor. 
The { parameter} $\zeta$ is assumed to satisfy $6<\zeta<9.5$.
(b): The min-arcs from each vertex are extracted. 
(c): The T-graph $T_1$ contains no closed communicating classes.
(d): The T-graph $T_2$ contains no closed communicating classes.
(e): The T-graph $T_3$ contains two closed communicating classes: $\{4_{+},1_{+}\}$ and $\{3_{-},2_{-}\}$. 
(f): The closed communicating classes are contracted to super-vertices. The modified arc weights are shown in red.
(g):  The T-graph $T_4^{(1)}$ contains no closed communicating classes.
(h):  The T-graph $T_5^{(1)}$ contains a single closed communicating class $\{\{4_{+},1_{+}\},2_{+},\{3_{-},2_{-}\},4_{-}\}$. States
$1_{-}$ and $3_{+}$ are transient.
The stopping criterion is met.
(i): The T-graph $T_5$ corresponding to the forward motion shown in Fig. \ref{MMotor1} by arrows.
}
\label{Mchain}
\end{center}
\end{figure}
\begin{figure}[htbp]
\begin{center}
\includegraphics[width=\textwidth]{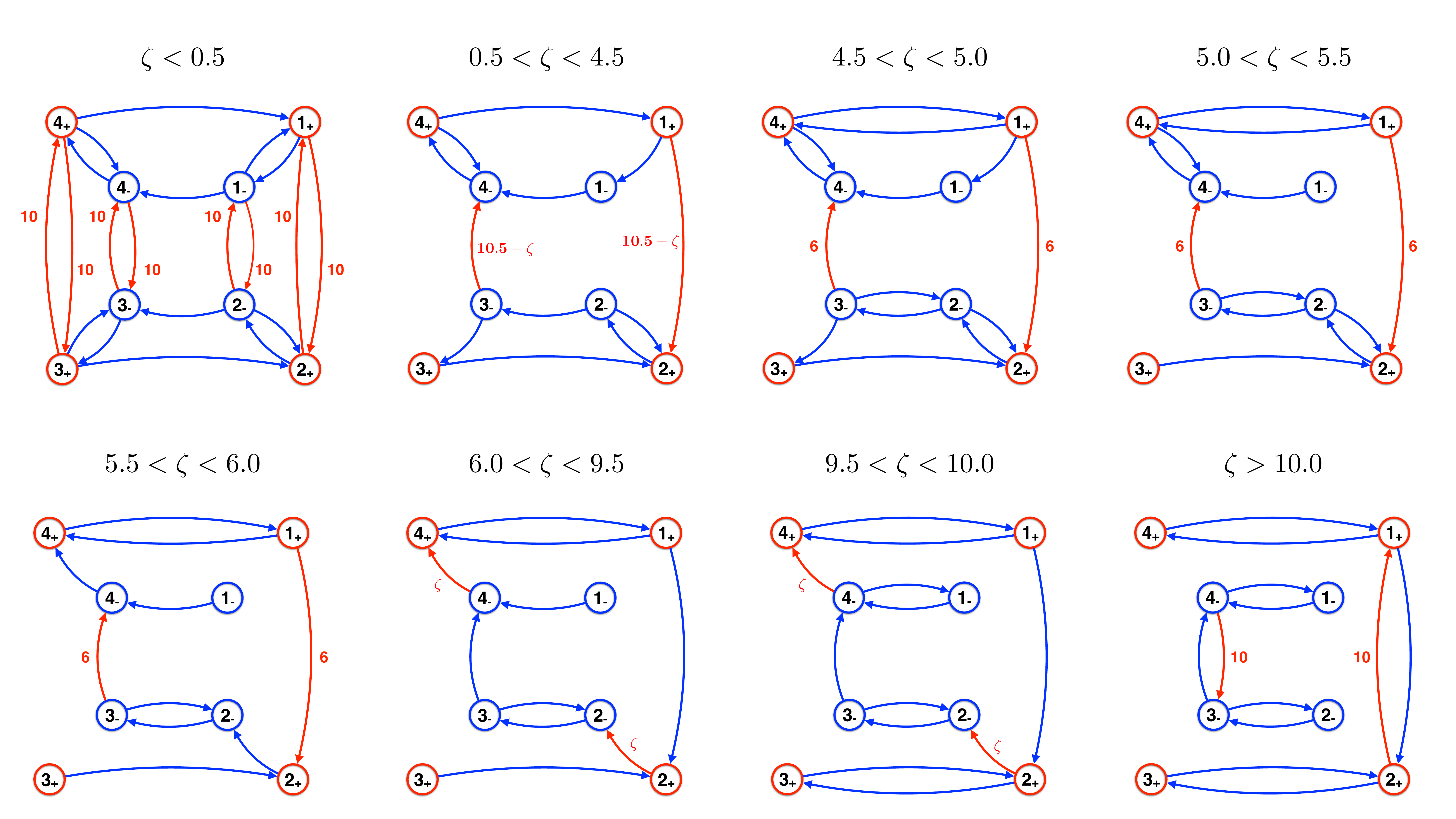}
\caption{The final T-graphs indicating the most probable switching processes between the states 
\{right head front, left head back\} and \{right head back, left head front\} and back
are shown 
for all intervals of $\zeta$ bounded by its critical values. The slowest transitions are indicated by the red arcs, and the exponential factors 
of their rates are {displayed}.
}
\label{fig:step}
\end{center}
\end{figure}

{
\section{Conclusion}
\label{sec:conclusion}
We have introduced the T-graphs indicating the most likely to observe transitions up to the corresponding critical timescales 
in possibly time-irreversible continuous-time Markov chains with 
exponentially small pairwise transition rates. We designed Algorithm 1 and Algorithm 2
to find the sequence of critical timescales and build the hierarchy of T-graphs in the cases without and with symmetry respectively. 
Both Algorithms 1 and 2 
can be used for non-symmetric Markov chains. 
Algorithm 1 is more straightforward for programming \cite{CG}.
Furthermore, in the non-symmetric case,
one can extract asymptotic estimates for eigenvalues and extract the hierarchy of the optimal W-graphs from the output of Algorithm 1.}

{
Algorithm 1 can still run on Markov chains with symmetry.
 The presence of symmetry might not be 
apparent from the input data and the output as it is in the example in Fig. \ref{fig:m1} (a).
The symmetry check should be included into the code implementing Algorithm 1.
If symmetry is detected, the output of Algorithm 1 should be interpreted as follows.
The distinct values of $\gamma_k$'s  are the correct numbers $\theta_p$. 
The graphs built by Algorithm 1
do not necessarily satisfy the definition of the T-graphs: they might miss some arcs.
However, the absorbing states and closed communicating classes are identified correctly, while
some arcs 
might be missing within the closed communicating classes. 
}

{
Algorithm 2 is specially designed to handle Markov chains with symmetry. 
It finds the sequence of distinct critical exponents, and 
accurately builds the hierarchy of T-graphs.
}

Many Markov chains serving as models of natural systems involve some symmetry. 
Here we have investigated a toy model inspired by the Astumian's model of kinesin moving on microtubule
and determined the range of rates of chemical switch enabling forward motion for the assumed free energy landscape.
{ We are planning to consider applications to large stochastic networks in our future work. In particular, 
we will analyze  a time-irreversible network with over 6000 vertices modeling the aggregation 
dynamics of Lennard-Jones particles using Algorithm 2.}

\section*{Acknowledgement}
We would like to thank Mr. Weilin Li for valuable discussions in the early stage of this work.
We are grateful to Prof. C. Jarzynski and Prof. M. Fisher for suggesting us to consider molecular motors as an 
example of a natural time-irreversible system with transition rates in the exponential form.
{ We also thank the anonymous reviewers for their valuable feedback and comments.}
This work was partially supported by NSF grants 1217118 { and 1554907}.


\begin{appendices}

\section{Proof of Theorem \ref{GC_asymeigval} } \label{App_A}

The following notations will be used throughout the proof.
  \begin{itemize}
  \item 
 $\mathcal{G}(n-l)$ is the set of all W-graphs with $n-l$ sinks and $l$ arcs for the graph 
  $G(\mathcal{S},\mathcal{A},\mathcal{U})$. The set of subgraphs of $G$ with exactly $l$ arcs emanated from distinct vertices will be denoted by
  $\mathcal{H}(l)$. Note that graphs in $\mathcal{H}(l)$ might contain directed cycles. Therefore, $\mathcal{G}(n-l) \subsetneq \mathcal{H}(l)$.
  \item 
  $\mathcal{S}^l $ is the set of all ordered selections of $l$ vertices out of $n$  (in the combinatorial sense):
  $$\mathcal{S}^l: = \{ i_1 i_2 \cdots i_l ~|~ i_m \in \{ 1, 2 , \cdots, n-1\}, 1 \leq m \leq l , \text{ distinct} \}.$$
  Note that $|\mathcal{S}^l| = n(n-1)\ldots(n-l+1)$.
  \item 
  $\mathcal{O}^l $ is the set of combinations of $l$ vertices out of $n$  (in the combinatorial sense).
   Each combination in $\mathcal{O}^l$ is ordered 
so that $i_1<i_2<\ldots<i_l$, i.e.,
  $$\mathcal{O}^l := \{  i_1 i_2 \cdots i_l  \in \mathcal{S}^l ~|~ i_1 < i_2 < \cdots < i_l \}.$$
 Note that $|\mathcal{O}^l| = \left(\begin{array}{c}n\\l\end{array}\right)$.
  \item 
 Every sequence $j_1,\ldots j_l$  in $\mathcal{S}^l$ can be permuted to an ordered sequence $\{i_1,\ldots, i_l\}$ in $\mathcal{O}^l$.
 This defines the permutation map $\sigma$: $\sigma(j_1,\ldots,j_l) = (i_1,\ldots,i_l)$. 
 Note that the map $\sigma$ is onto but not one-to-one.
  \item
 We will call sequences in $\mathcal{S}^l$ equivalent if and only if they are mapped to the same sequence in $\mathcal{O}^l$. 
  Therefore, $\mathcal{O}^l=\mathcal{S}^l/\sim$, { meaning that $\mathcal{O}^l$ is the set of equivalence classes in $\mathcal{S}^l$}.
 %
   \item
  For any $i_1 \cdots i_l \in \mathcal{O}^l$, we will denote by $L_{ i_1 \cdots  i_l }$ the $l\times l$ submatrix of $L$ 
  consisting of  the intersection of $ i_1, \cdots, i_l $ rows and columns of $L$.
  \end{itemize}
  
 \begin{proof}
 The proof of Theorem \ref{GC_asymeigval} consists of the following three steps.
  \begin{enumerate} [{\bf Step 1:}]
  \item 
 The characteristic polynomial of the generator matrix $L$ is 
  $$ 
  P_L(t) := \text{det} ( t I - L) = t^n + \sum_{l = 1}^{n-1} C_l t^{n-l}
  $$ 
  whose coefficients $C_l$  are given by
  \begin{align}
  C_l 
  &= \sum_{i_1  \cdots i_l \in \mathcal{O}^l} \text{det} ( - L_{i_1 \cdots i_l} )\notag \\
  &= (-1)^l \sum_{i_1  \cdots i_l \in \mathcal{O}^l}  ~\sum_{ \substack{j_1  \cdots j_l \in\mathcal{S}^l\\ j_1  \cdots j_l\sim i_1  \cdots i_l}} 
  (-1)^{| \sigma(j_1, \cdots, j_l) |} L_{i_1 j_1} \cdots L_{i_l j_l}, \label{C_r}
   \end{align}
   where $| \sigma(j_1, \cdots, j_l) |$ is the number of inversions in $\sigma(j_1, \cdots, j_l) $.
  \item 
  Using the zero row sum property of the generator matrix $L$, Eq. (\ref{C_r}) can be further simplified resulting at the following more compact expression:
   \begin{equation}
   \label{C_r_g}
  C_l = \sum_{g \in \mathcal{G}(n-l)} \Pi(g),  \quad{\rm where}\quad \Pi(g) = \prod_{(i\rightarrow j)\in g}L_{ij}.
  \end{equation}
    The key idea of the  derivation of Eq. (\ref{C_r_g}) is  to show that
   \begin{equation}
    \label{C_r_H(r)}
  C_l = (-1)^l \sum_{g \in \mathcal{H}(l) } a_g \Pi(g), \quad{\rm where}\quad 
  a_g = 
  \begin{cases}
  (-1)^l, & \text{ if } g \in \mathcal{G}(n-l),\\
  0, & \text{ if } g \in \mathcal{H}(l) \setminus \mathcal{G}(n-l).
  \end{cases}
  \end{equation}
  \item  
Comparing the coefficient of the characteristic polynomial  
$$
P_L(t) = \text{det} ( t I - L ) = t^n + \sum_{l = 1}^{n-1} C_l t^{n-l} = t (t + \lambda_1) \cdots (t + \lambda_{n-1}),
$$ 
we obtain the following estimates for eigenvalues:
  \begin{align}
  \lambda_m = \frac{ \Pi (g^{\ast}_{m}) }{ \Pi (g^{\ast}_{m+1}) } (1 + o(1) ). \label{Alambda}
  \end{align}
Eq. \eqref{Alambda} is equivalent to Eq. \eqref{GC_lambda}.
  \end{enumerate}
  
 Now we elaborate each step.
  
   \subsection*{Step 1:}
Consider the following polynomial in $n$ variables $t_1, t_2, \cdots, t_n$:
  \begin{align}
  P_L( t_1, t_2, \cdots, t_n) 
  &:= \text{det} ( \text{diag} \{ t_1, t_2, \cdots, t_n\} - L)\notag \\
  &= t_1 \cdots t_n + \sum_{l = 1}^{n - 1} ~\sum_{i_1 \cdots i_l \in \mathcal{O}^l} C_{i_1 \cdots i_l} t_{i_{l+1}} \cdots t_{i_n} \label{ptt}
  \end{align}
  where $\{ i_{l+1}, \cdots, i_n \} = \{ 1, \cdots, n\} \setminus \{ i_1, \cdots, i_l \}$. Replacing all 
  $t_i$  by $t$, we recover the characteristic polynomial $P_L(t)$ where 
  \begin{align}
  C_l = \sum_{i_1 \cdots i_l \in \mathcal{O}^l} C_{ i_1 \cdots i_{l} }. \label{C_det}
  \end{align}
 The term $C_{i_1 \cdots i_l} t_{i_{l+1}} \cdots t_{i_n}$ in Eq. \eqref{ptt} is obtained by picking $t_{i_{l+1}}$, ..., $t_{i_n}$ from the diagonal entries in rows of 
the matrix $\text{diag} \{ t_1, t_2, \cdots, t_n\} - L$ and multiplying them by the determinant of $-{L_{i_1\ldots i_l}}$. Hence
 \begin{align}
 C_{ i_1 \cdots i_l }  = \text{det} \left( -L_{  i_1 \cdots i_l } \right). \label{C_i_s} 
 \end{align}
  Combining Eqs. \eqref{C_det} and \eqref{C_i_s} and applying the  Leibniz formula for determinants we obtain Eq. \eqref{C_r}.
 
 \subsection*{Step 2: }
Consider the product terms  $L_{i_1 j_1} \cdots L_{i_l j_l}$, $ i_1 \cdots i_l \in \mathcal{O}^l, i_1 \cdots i_l \sim j_1 \cdots j_l$
 in Eq. \eqref{C_r}. 
  Suppose the sequences $i_1 \cdots i_l$ and $j_1 \cdots j_l$ agree on exactly $s$ entries and differ at $l-s$ entries, i.e., 
 \begin{align}
 &i_{m_1} = j_{m_1}, \cdots, i_{m_s} = j_{m_s},\\
 &i_{m_{s+1}} \neq j_{m_{s+1}}, \cdots, i_{m_l} \neq j_{m_l}, \text{ where } i_{m_{s+1}} \cdots i_{m_l} \sim j_{m_{s+1}} \cdots j_{m_l} \in \mathcal{S}^{l-s}.
 \end{align}
  Note that $s$ can be any number between $0$ and $l$ except for $l-1$.
Using the zero sum property of $L$,  we obtain
 \begin{align}
& L_{i_1 j_1} \cdots L_{i_l j_l} = L_{ i_{m_1} i_{m_1} } \cdots L_{ i_{m_s}  i_{m_s} }  L_{ i_{m_{s+1}}  j_{m_{s+1}} } \cdots L_{ i_{m_l}  j_{m_l} }\nonumber \\
 &= \left( - \sum_{d_1 \neq i_{m_1} } L_{i_{m_1} d_1}  \right) \cdots \left( - \sum_{d_s \neq i_{m_s} } L_{i_{m_s} d_s}   \right) 
 L_{ i_{m_{s+1}}  j_{m_{s+1}} } \cdots L_{ i_{m_l}  j_{m_l} } \nonumber \\
 &= (-1)^s \sum_{d_1 \neq i_{m_1}, \cdots, d_s \neq i_{m_s} } L_{ i_{m_1} d_1 } \cdots L_{ i_{m_s} d_s } L_{ i_{m_{s+1}} j_{m_{s+1}} } \cdots L_{ i_{m_l} j_{m_l} }. \label{breakloop}
 \end{align} 
It is helpful to consider the collection of graphs $g$ with $n$ vertices and sets of arcs 
$$
\{ (i_{m_1} \rightarrow d_1) , \cdots, (i_{m_s} \rightarrow d_s), (i_{m_{s+1} } \rightarrow j_{m_{s+1}}), \cdots, ( i_{m_l} \rightarrow  j_{m_l} ) \}
$$
corresponding to the products 
 $$
 L_{ i_{m_1} d_1 } \cdots L_{ i_{m_s} d_s } L_{ i_{m_{s+1}} j_{m_{s+1}} } \cdots L_{ i_{m_l} j_{m_l} }
 $$ 
 in Eq. \eqref{breakloop}.
 Each of the vertices $i_1,\ldots,i_l$ of $g$ has exactly one outgoing arc, while the other vertices have no outgoing arcs.
Hence, the graphs $g$ belong to the set of graphs $\mathcal{H}(l)$.
 If $s \leq l-2$, each of the vertices $\{ i_{m_{s+1}}, \cdots, i_{m_l} \}$  of the graph $g$ has exactly one outgoing arc heading to
 $\{ i_{m_{s+1}}, \cdots, i_{m_l} \}$, and exactly one incoming arc
from $\{ i_{m_{s+1}}, \cdots, i_{m_l} \}$. Therefore, the arcs 
$\{(i_{m_{s+1} } \rightarrow j_{m_{s+1}}), \cdots, ( i_{m_l} \rightarrow  j_{m_l} ) \}$ necessarily form cycles in $g$.
The arcs 
 $\{ (i_{m_1} \rightarrow d_1) , \cdots, (i_{m_s} \rightarrow d_s)\}$ do not necessarily form cycles.
Therefore, Eq.  \eqref{C_r} can be rewritten using Eq. \eqref{breakloop} and the graphs $\mathcal{H}(l)$ as
\begin{align}
  C_l 
  &= \sum_{i_1  \cdots i_l \in \mathcal{O}^l} \text{det} ( - L_{i_1 \cdots i_l} )
  = (-1)^l \sum_{i_1  \cdots i_l \in \mathcal{O}^l}  ~\sum_{ \substack{j_1  \cdots j_l \in\mathcal{S}^l\\ j_1  \cdots j_l\sim i_1  \cdots i_l}} 
  (-1)^{| \sigma(j_1, \cdots, j_l) |} L_{i_1 j_1} \cdots L_{i_l j_l}
  \notag \\
 & = (-1)^l \sum_{i_1  \cdots i_l \in \mathcal{O}^l}  ~\sum_{ \substack{j_1  \cdots j_l \in\mathcal{S}^l\\ j_1  \cdots j_l\sim i_1  \cdots i_l}} 
  (-1)^{| \sigma(j_1, \cdots, j_l) |}
   (-1)^s 
 L_{ i_{m_{s+1}} j_{m_{s+1}} } \cdots L_{ i_{m_l} j_{m_l} }
 \sum_{ \substack{d_1\neq i_{m_1}\\ \cdots\\ d_s \neq i_{m_s}}}  L_{ i_{m_1} d_1 } \cdots L_{ i_{m_s} d_s }  \notag\\
 &=(-1)^l \sum_{g\in\mathcal{H}(l)}a_g\Pi(g), \label{a_g}
   \end{align}
where the factors $a_g$ will be determined below.

  First we assume that the graph $g$ with the set of arcs $\{ (i_1 \rightarrow j_1), \cdots, (i_l \rightarrow j_l) \}$
  contains no cycles, i.e., $ g \in \mathcal{G} (n-l)\subset\mathcal{H}(l)$. 
  This can happen only if $s=l$ in Eq. \eqref{a_g}, i.e., $i_1=j_1$, ...,$i_l=j_l$. Therefore, $(-1)^{| \sigma(j_1, \cdots, j_l) |} =(-1)^0 = 1$, and
  \begin{equation*}
L_{i_1 i_1} \cdots L_{i_l i_l} = \left( - \sum_{d_1 \neq i_1} L_{i_1 d_1} \right) \cdots \left( - \sum_{d_l \neq i_l} L_{i_l d_l}  \right) = 
(-1)^l \sum_{d_1 \neq i_1, \cdots, d_l \neq i_l} L_{i_1 d_1} \cdots L_{i_l d_l}.
  \end{equation*}
Hence,  the  product   $ L_{i_1 d_1} \cdots L_{i_l d_l}$ corresponding to the graph $g$
  enters Eq. \eqref{a_g}  only once.
Therefore, $a_{g} = (-1)^l$ for all $g \in \mathcal{G} (n-l)$.  

Now we assume that the set of arcs 
$\{ (i_1 \rightarrow x_1), \cdots, (i_l \rightarrow x_l) \}$ 
of $g$ contains $N\ge 1$ cycles, i.e.,  
$g \in \mathcal{H}(l) \setminus \mathcal{G}(n-l)$.
In this case, the product $ L_{i_1 x_1} \cdots L_{i_l x_l}$, $i_p\neq x_p$, $1\le p\le l$, enters Eq. \eqref{a_g}
$2^N$ times either with  the  plus or minus sign. The number $2^N$ comes from the fact that each cycle in $g$ 
can be formed in two ways: $(i)$ by arcs 
corresponding to off-diagonal factors $L_{i_pj_p}$, $i_p\neq j_p$, in Eq. \eqref{a_g}, 
or $(ii)$
by arcs originating from the replacement of diagonal factors $L_{i_pi_p}$ in Eq. \eqref{a_g} with $-\sum_{d_p\neq i_p}L_{i_pd_p}$.
We will prove that $ L_{i_1 x_1} \cdots L_{i_l x_l}$, $i_p\neq x_p$, $1\le p\le l$, enters Eq. \eqref{a_g} with sign plus the
same number of times as it does with sign minus. This will imply that   $a_g=0$.
To do so, we show that for each entry of  $ L_{i_1 x_1} \cdots L_{i_l x_l}$
one can uniquely define another entry with an opposite sign.
Let $c$ be a cycle in $g$.
Consider two terms in Eq. \eqref{a_g} containing the product $L_{i_1x_1}\ldots L_{i_lx_l}$ that correspond to 
possibilities $(i)$ and $(ii)$ for the origin of a selected cycle $c$ in $g$, while all other factors corresponding to the arcs not in $c$ originate in the same way.
Let $\sigma_1(j^1_1,\ldots j^1_l)$ and $\sigma_2(j^2_1,\ldots,j^2_l)$ be the permutations in Eq. \eqref{a_g} corresponding to $(i)$ and $(ii)$ respectively,
and $s_1$ and $s_2$ be the corresponding numbers of fixed entries in $\sigma_1$ and $\sigma_2$ respectively.
If the cycle $c$ has length $|c|$, then 
\begin{equation} 
\label{perm}
(-1)^{|\sigma_1|} = (-1)^{|\sigma_2|}(-1)^{|c|-1},\quad (-1)^{s_1} = (-1)^{s_2+|c|}.
\end{equation}
Here we have used the known combinatorial fact that the parity of a permutation consisting of cycles $c_1$, ..., $c_k$ is $(-1)^{\sum_{j=1}^k|c_j|-1}$.
Therefore, the signs $(-1)^l(-1)^{|\sigma_1|}(-1)^{s_1}$ and $(-1)^l(-1)^{|\sigma_2|}(-1)^{s_2}$ preceding the corresponding products in Eq. \eqref{a_g} are 
opposite. This implies that all products corresponding to any graph $g\in \mathcal{H}(l) \setminus \mathcal{G}(n-l)$ cancel out, i.e., $a_g=0$. 

Therefore, $C_l = \sum_{g\in\mathcal{G}(n-l)}\Pi(g)$, i.e., Eq. \eqref{C_r_g} holds.
{ An example illustrating this cancellation is given at the end of Appendix \ref{App_A}.}

 \subsection*{Step 3:}
 Let us write the characteristic polynomial $P_L(t)$ in the form 
 \begin{align}
 P_L(t) = \text{det} ( tI -L ) = t ( t + \lambda_1 ) \cdots ( t + \lambda_{n-1} ),
 \end{align}
where $|\lambda_1|<|\lambda_2|<\ldots<|\lambda_{n-1}|$.
A simple calculation gives
 \begin{align}
 C_l = \sum_{i_1 \cdots i_r \in \mathcal{O}^l } \lambda_{i_1} \cdots \lambda_{i_l}. \label{C_r_lambda}
 \end{align}
Comparing Eqs. \eqref{C_r_g} and \eqref{C_r_lambda} we obtain
 \begin{equation}
 \label{ratio}
  \frac{C_l}{ C_{l-1}}
  =\frac{\sum_{i_1 \cdots i_l \in \mathcal{O}^l } \lambda_{i_1} \ldots \lambda_{i_l}}
  {\sum_{i_1 \cdots i_{l-1} \in \mathcal{O}^{l-1} } \lambda_{i_1} \ldots \lambda_{i_{l-1}}}  
 =\frac{\sum_{g\in\mathcal{G}(n-l)}\Pi(g)}{\sum_{g\in\mathcal{G}(n-l+1)}\Pi(g)},\quad l\le l\le n-1.
 \end{equation}
Since $L_{ij}$ are of the form $\kappa_{ij}\exp(-U_{ij}/\varepsilon)$, 
the sums in the enumerators and the denominators  are dominated by their
largest summands in Eq. \eqref{ratio}.
According to Assumption \ref{A3}, all optimal W-graphs are unique. Therefore,
\begin{equation}
\label{frac}
 \frac{C_l}{ C_{l-1}}  = \lambda_{n-l}(1+o(1)) 
  = \frac{\Pi(g^{\ast}_{n-l}) }{\Pi(g^{\ast}_{n-l+1}) } (1+o(1)), 
   \end{equation}
and Eq. \eqref{Alambda} immediately follows.

\end{proof}

{
\begin{example}
\normalfont
Let us illustrate the cancellation of terms in Eq. \eqref{a_g}.
Let $n=4$, $l=3$, and $\{i_1,i_2,i_3\}=\{1,2,3\}$.
Then the inner sum in Eq. \eqref{a_g} becomes
\begin{align*}
& \sum_{(j_1,j_2,j_3)\sim(1,2,3)}(-1)^{|\sigma(j_1,j_2,j_3)|}L_{1j_1}L_{2j_2}L_{3j_3} = \\
(j_1,j_2,j_3) = (1,2,3):~\sigma = 0,~s = 3:\qquad &-(L_{12}+L_{13}+L_{14})(L_{21}+L_{23}+L_{24})(L_{31}+L_{32}+L_{34}) \\
(j_1,j_2,j_3) = (1,3,2):~\sigma =1,~s = 1:\qquad&+(L_{12}+L_{13}+L_{14})L_{23}L_{32} \\
(j_1,j_2,j_3) = (2,1,3):~\sigma =1,~s = 1:\qquad&+L_{12}L_{21}(L_{31}+L_{32}+L_{34}) \\
(j_1,j_2,j_3) = (2,3,1):~\sigma =2,~s = 0:\qquad&+L_{12}L_{23}L_{31} \\
(j_1,j_2,j_3) = (3,2,1):~\sigma =1,~s = 1:\qquad&+L_{13}(L_{21}+L_{23}+L_{24})L_{31} \\
(j_1,j_2,j_3) = (3,1,2):~\sigma =2,~s = 0:\qquad&+L_{13}L_{21}L_{32}.
\end{align*}
For each product term of the form $L_{ix_1}L_{2x_2}L_{3x_3}$, $x_q\in\{1,2,3,4\}$, one can draw a graph with 
the set of vertices $\mathcal{S} = \{1,2,3,4\}$ and the set of arcs $\mathcal{A} = \{1\rightarrow x_1,2\rightarrow x_2,3\rightarrow x_3,4\rightarrow x_4\}$.
One can check that all terms corresponding to graphs with no cycles are encountered just once and only in the terms originating from $(j_1,j_2,j_3) = (1,2,3)$.
All of them are preceded by the sign ``-".  This corresponds to $a_g = (-1)^l = (-1)^3$. 
On the contrary, each term corresponding to a graph with cycles (in this example, there can be at most $N=1$ cycle), 
is encountered exactly twice ($2^1 = 2$): once it comes from the product corresponding to $(j_1,j_2,j_3) = (1,2,3)$ with sign ``-", 
and once it comes from some non-identical permutation with sign ``+".  Hence, all of such term cancel out.
\end{example}
}

\section{Proof of Theorem \ref{theorem:m1} } \label{App_B}

Prior to start proving Theorem \ref{theorem:m1}, we prove some auxiliary facts and introduce some use useful definitions.
The proof of Theorem \ref{theorem:m1} will exploit the following lemmas. 
\begin{lemma}
\label{lemma:m1}
Suppose the function {\tt FindTgraphs} is run on a graph $G(\mathcal{S},\mathcal{A},\mathcal{U})$ satisfying Assumptions \ref{A1} and \ref{A2}:
{\tt FindTgraphs}$(r,k,G,\Gamma,\mathcal{B}')$.
Let $c$ be a cycle detected in the graph $\Gamma$ at step $k$. Suppose the weights of all outgoing arcs with tails in $c$  
and heads not in $c$ 
are modified according to the update rule
$$
U^{new}_{ij} = U_{ij} -U_{\mu(i)} +\gamma_k.
$$ 
Then 
$U_{ij}^{new} \ge \gamma_k$ for all $i\in c$, $j \notin c$, and 
$U_{ij}^{new}  = \gamma_k$ if and only if
$U_{ij}=U_{\mu(i)}$.
\end{lemma}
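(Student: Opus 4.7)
The plan is to read off the conclusion directly from the structure of Algorithm 1 together with the defining property of min-arcs. The central observation is that, at the moment the cycle $c$ is detected in $\Gamma$ at step $k$, each vertex $i \in c$ has exactly one outgoing arc in $\Gamma$, and that arc is precisely $\mu(i)$, with head lying in $c$.

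To justify this observation, I would first note that the only way an arc enters $\Gamma$ inside {\tt FindTgraphs} is via Step {\tt (2)} of the while-loop, which transfers a single minimum-weight arc from $\mathcal{B}$ to $\Gamma$. By construction of $\mathcal{B}$ (at initialization and at Step {\tt (12)}), $\mathcal{B}$ contains at most one arc with tail at any given vertex or super-vertex of $G^{(r)}$. Therefore, at any time during the execution, every vertex $v$ of $G^{(r)}$ has at most one outgoing arc in $\Gamma$, and if it has one, it is the min-arc $\mu(v)$ from $v$. Since $c$ is a directed cycle in $\Gamma$, every $i \in c$ must have a unique outgoing arc in $\Gamma$ that lies in $c$; hence this arc is $\mu(i)$, and its head belongs to $c$.

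From this the lemma follows immediately. Fix $i \in c$ and $j \notin c$ with $(i \rightarrow j) \in \mathcal{A}$. Because the head of $\mu(i)$ lies in $c$ while $j \notin c$, the arc $(i \rightarrow j)$ cannot coincide with $\mu(i)$; by the very definition of $\mu(i)$ as a min-arc from $i$, we have $U_{ij} \ge U_{\mu(i)}$, with equality if and only if $(i \rightarrow j)$ is another min-arc from $i$ of the same weight as $\mu(i)$. Substituting into the update rule yields
\[
U^{new}_{ij} \;=\; U_{ij} - U_{\mu(i)} + \gamma_k \;\ge\; \gamma_k,
\]
and the equality case $U^{new}_{ij} = \gamma_k$ is equivalent to $U_{ij} = U_{\mu(i)}$, as claimed.

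There is no real obstacle in this argument; the only subtle point worth stating explicitly is the book-keeping claim that $\mathcal{B}$ never contains two arcs with the same tail, which is why each vertex of $G^{(r)}$ accumulates at most one outgoing arc in $\Gamma$. This is a direct consequence of Steps {\tt (12)} and the initialization phase of Algorithm 1, and needs to be spelled out because the lemma is invoked in Appendix \ref{App_B} without assuming Assumption \ref{A5}, so multiple min-arcs of equal weight may coexist in $G^{(r)}$ even though only one is placed into $\mathcal{B}$.
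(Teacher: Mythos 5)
Your proof is correct and follows essentially the same route as the paper's: the inequality $U^{new}_{ij}\ge\gamma_k$ and its equality case reduce immediately to $U_{ij}\ge U_{\mu(i)}$, which holds by the definition of the min-arc $\mu(i)$, with equality exactly when $(i\rightarrow j)$ is another min-arc from $i$. The additional bookkeeping about $\Gamma$ containing exactly one outgoing arc per vertex of $c$ (namely $\mu(i)$, with head in $c$) is harmless but not needed for the conclusion, since the paper's argument uses only the min-arc property of $\mu(i)$ in $G^{(r)}$.
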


\begin{proof}
The fact that $U_{ij}^{new} \ge \gamma_k$ for all $i\in c$ and $j \notin c$ follows from the fact that $U_{ij}\ge U_{\mu(i)}$. The equality takes place 
 if and only if $U_{\mu(i)} = U_{ij}$, i.e., if $i\rightarrow j$ is another min-arc from $i$. 
\end{proof}

\begin{corollary}
\label{cor1}
Suppose the function {\tt FindSymTgraphs} is run on a graph $G(\mathcal{S},\mathcal{A},\mathcal{U})$ satisfying Assumptions \ref{A1} and \ref{A2}:
{\tt FindSymTgraphs}$(r,p,G,T,\mathcal{B})$.
Let $C$ be a closed communicating class detected in the graph $T$ at step $p$. Suppose the weights of all outgoing arcs with tails in $C$  
and heads not in $C$ 
are modified according to the update rule
$$
U^{new}_{ij} = U_{ij} - U_{\min}(i) +\theta_p.
$$ 
Then 
$U_{ij}^{new} > \theta_p$ for all $i\in C$, $j \notin C$.
\end{corollary}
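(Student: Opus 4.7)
The plan is to exploit the difference between Algorithm 1 and Algorithm 2 that is highlighted in Section \ref{sec:alg2}: Algorithm 2 contracts only \emph{closed} communicating classes, not arbitrary cycles, and moves entire sets of min-arcs of the same weight at once. The strict inequality in Corollary \ref{cor1} will follow from a closedness argument that is not available in the proof of Lemma \ref{lemma:m1}, where the cycle $c$ may well be an \emph{open} communicating class.

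The key claim I want to establish is the following: if $C$ is a closed communicating class detected in $T^{(r)}$ at step $p$, then for every vertex $i\in C$, every min-arc from $i$ (i.e., every arc $i\to j'$ with $U_{ij'}=U_{\min}(i)$) satisfies $j'\in C$. To see this, first observe that $U_{\min}(i)=\theta_{p'}$ for some $p'\le p$, because the entire set $\mathcal{A}_{\min}(i)$ was placed into the bucket $\mathcal{B}$ during initialization and was transferred to $T^{(r)}$ at the unique step where arcs of weight $\theta_{p'}$ are processed. Hence every min-arc from $i$ already lies in $T^{(r)}$ by the time step $p$ begins. If some min-arc $i\to j'$ had $j'\notin C$, then from $i\in C$ there would be a directed path in $T^{(r)}$ (of length one) leaving $C$, contradicting the defining property (b) of a closed communicating class recalled right before Algorithm \ref{algorithm2}.

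Given this claim, the corollary is immediate: for any arc $i\to j$ with $i\in C$ and $j\notin C$, the arc $i\to j$ cannot be a min-arc from $i$, so $U_{ij}>U_{\min}(i)$ \emph{strictly}. Substituting into the update rule \eqref{update_rule1},
\begin{equation*}
U^{new}_{ij} \;=\; U_{ij} - U_{\min}(i) + \theta_p \;>\; \theta_p.
\end{equation*}

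The main (minor) obstacle is handling the possibility that $i\in C$ is a super-vertex produced by a previous contraction at some recursion level below $r$. In that case $U_{\min}(i)$ refers to the \emph{modified} min-arc weight assigned to the super-vertex at the time it was created in step {\tt (10)} of Algorithm \ref{algorithm2}, but the same bookkeeping applies: all arcs of that weight were placed into $\mathcal{B}$ at that moment and, being of weight $\theta_{p'}\le\theta_p$, were subsequently transferred to $T^{(r)}$ by step $p$, so the closedness argument goes through verbatim. No other case requires attention, and no further computation is needed.
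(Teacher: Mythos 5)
Your proof is correct and takes essentially the same route as the paper, which treats this as an immediate consequence of Lemma \ref{lemma:m1}: since Algorithm 2 transfers the \emph{entire} set of min-arcs of a given weight, closedness of $C$ in $T$ forces every min-arc from each $i\in C$ to head inside $C$, so any arc $i\rightarrow j$ with $j\notin C$ has $U_{ij}>U_{\min}(i)$ strictly and hence $U^{new}_{ij}>\theta_p$. The one step worth making explicit in your argument is why $U_{\min}(i)$ has already been processed (i.e.\ $U_{\min}(i)\le\theta_p$): because $C$ is a nontrivial communicating class of $T$, each $i\in C$ has an outgoing arc in $T$, and the only arcs with tail $i$ ever moved from $\mathcal{B}$ to $T$ are min-arcs from $i$, so all of $\mathcal{A}_{\min}(i)$ lies in $T$ by step $p$.
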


We will denote by $\mathcal{S}(v_{c})$  the subset of vertices of the original graph $G(\mathcal{S},\mathcal{A},\mathcal{U})$
contracted into the super-vertex $v_{c}$.

\begin{corollary}
\label{cor2}
{Suppose the function {\tt FindTgraphs} is run on a graph $G(\mathcal{S},\mathcal{A},\mathcal{U})$ satisfying Assumptions \ref{A1} and \ref{A2}:
{\tt FindTgraphs}$(r,k,G,\Gamma,\mathcal{B}')$. 
Let $c_1$, ..., $c_N$ be a sequence of  cycles created after the addition of arcs of weights
$\gamma_1 \le \gamma_2 \le \ldots \le \gamma_N$  respectively, such that $\mathcal{S}(v_{c_N})\supset\mathcal{S}(v_{c_l})$ for all $l < N$.
Let $i$ and $j$ be vertices of $G$ such that $(i\rightarrow j)\in\mathcal{A}$ and  $i\in \mathcal{S}(v_{c_l})$, $l < N$, and $j\notin \mathcal{S}(v_{c_N})$. 
Let $U^{(l)}$ the (possibly modified) weight of the arc $i\rightarrow j$ after the creation of the cycle $c_l$.
Then  $U^{(N)}_{ij} \ge \gamma_N$ and $U^{(N)}_{ij}   = \gamma_N$ if and only if $(i\rightarrow j)$ is a min-arc from $i$.
}
\end{corollary}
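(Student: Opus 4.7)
The plan is to prove Corollary \ref{cor2} by induction on $N$, iterating Lemma \ref{lemma:m1} once along the nested chain of cycles. The natural reading of the setup is that $\mathcal{S}(v_{c_1})\subseteq\mathcal{S}(v_{c_2})\subseteq\cdots\subseteq\mathcal{S}(v_{c_N})$ is the chain of successively larger super-vertices containing $i$, and the arc $i\to j$ gets its weight updated precisely at the creation of each $c_l$ in the chain. Under this reading, $U^{(l)}_{ij}$ denotes the modified weight after the $l$-th update.

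First I would dispose of $N=1$ as vacuous (no $l<1$) and handle $N=2$ as a single non-trivial application of Lemma \ref{lemma:m1}. At the creation of $c_1$, Lemma \ref{lemma:m1} gives $U^{(1)}_{ij}\ge\gamma_1$ with equality iff $(i\to j)$ is a min-arc from $i$ in $G$. At the creation of $c_2$, the update rule yields
$$
U^{(2)}_{ij}=U^{(1)}_{ij}+\gamma_2-U_{\mu(v_{c_1})}.
$$
Since $i\to j$ is one of the outgoing arcs from $v_{c_1}$ (using $j\notin\mathcal{S}(v_{c_2})\supseteq\mathcal{S}(v_{c_1})$), we have $U_{\mu(v_{c_1})}\le U^{(1)}_{ij}$, so $U^{(2)}_{ij}\ge\gamma_2$.

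Next, for the inductive step I would assume the statement for chains of length $N-1$ and derive it for length $N$. Applying the inductive hypothesis to the sub-chain $c_1,\ldots,c_{N-1}$ and the same arc (the containment $j\notin\mathcal{S}(v_{c_N})\supseteq\mathcal{S}(v_{c_{N-1}})$ makes the hypothesis applicable) gives $U^{(N-1)}_{ij}\ge\gamma_{N-1}$, with equality iff $(i\to j)$ is a min-arc from $i$ in $G$. The creation of $c_N$ triggers one further application of the update rule relative to the super-vertex $v_{c_{N-1}}$ containing $i$, and the same reasoning as in the base case yields $U^{(N)}_{ij}=U^{(N-1)}_{ij}+\gamma_N-U_{\mu(v_{c_{N-1}})}\ge\gamma_N$.

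The equality characterization is where the main obstacle lies, because a naive computation only gives $U^{(N)}_{ij}=\gamma_N$ iff $U^{(N-1)}_{ij}=U_{\mu(v_{c_{N-1}})}$, which is not immediately the same as $(i\to j)$ being a min-arc from $i$ in $G$. To bridge the gap I would split into two cases according to whether $U_{\mu(v_{c_{N-1}})}=\gamma_{N-1}$ or $U_{\mu(v_{c_{N-1}})}>\gamma_{N-1}$. In the former case, the equality forces $U^{(N-1)}_{ij}=\gamma_{N-1}$, and the inductive hypothesis supplies the min-arc characterization in both directions. In the latter case, I would invoke the bucket's processing order: the arc realizing $U_{\mu(v_{c_{N-1}})}$ is added to the T-graph at step-weight equal to $U_{\mu(v_{c_{N-1}})}$, and since $c_N$ contains $v_{c_{N-1}}$ this arc must participate in closing $c_N$, so its head lies in $\mathcal{S}(v_{c_N})$; consequently, for any $j\notin\mathcal{S}(v_{c_N})$ the weight $U^{(N-1)}_{ij}$ exceeds $U_{\mu(v_{c_{N-1}})}$ strictly, which gives $U^{(N)}_{ij}>\gamma_N$ while simultaneously $U^{(N-1)}_{ij}>\gamma_{N-1}$ rules out $(i\to j)$ as a min-arc from $i$ in $G$ via the inductive hypothesis, so the biconditional holds vacuously with both sides false.
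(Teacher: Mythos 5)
Your reduction to the nested subchain of cycles containing $i$, the telescoped use of the update rule, and the resulting inequality $U^{(N)}_{ij}\ge\gamma_N$ are all sound, and so is the implication ``$(i\to j)$ a min-arc from $i$ $\Rightarrow U^{(N)}_{ij}=\gamma_N$'' (in your Case A via the inductive hypothesis, and in Case B because a min-arc would force $U^{(N-1)}_{ij}=\gamma_{N-1}<U_{\mu(v_{c_{N-1}})}$, a contradiction). The paper gives no separate proof of Corollary \ref{cor2} --- it is presented as the iteration of Lemma \ref{lemma:m1} along the contraction hierarchy that you carry out --- and the parts you do establish are exactly the parts invoked later in the proof of Theorem \ref{theorem:m1} (the lower bound and ``min-arc $\Rightarrow$ weight becomes $\gamma_N$'').

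The genuine gap is in Case B of your equality analysis, at the claim that ``the arc realizing $U_{\mu(v_{c_{N-1}})}$'' must head into $\mathcal{S}(v_{c_N})$ and hence $U^{(N-1)}_{ij}>U_{\mu(v_{c_{N-1}})}$ strictly for every $j\notin\mathcal{S}(v_{c_N})$. The corollary assumes only Assumptions \ref{A1} and \ref{A2}; Assumption \ref{A5} is deliberately dropped because this statement is used to analyze Algorithm 1 run on chains with symmetry, so the minimum over outgoing arcs of $v_{c_{N-1}}$ may be attained by several arcs, only one of which is placed in $\mathcal{B}'$, and a tied arc can perfectly well head outside $\mathcal{S}(v_{c_N})$. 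Concretely, take $\mathcal{S}=\{1,2,3,4\}$ with $U_{12}=U_{21}=1$, $U_{13}=U_{24}=U_{31}=2$ and $4$ absorbing: the cycle $c_1=\{1,2\}$ is contracted after an arc of weight $\gamma_1=1$, both outgoing arcs acquire modified weight $2$, and if the code selects $1\to3$ as $\mu(v_{c_1})$ then $c_2=\{1,2,3\}$ closes at $\gamma_2=2$ and the arc $2\to4$ gets modified weight $2+2-2=2=\gamma_2$, although $U_{24}=2>1=U_{\min}(2)$. So the strict inequality you assert fails, your Case B collapses, and with it the ``only if'' direction as argued; it cannot be repaired under \ref{A1}--\ref{A2} alone, but needs either uniqueness of min-arcs (Assumption \ref{A5}), or the extra hypothesis of Corollary \ref{cor3} (each intermediate cycle has a min-arc leaving it, which forces $U_{\mu(v_{c_l})}=\gamma_l$ and keeps every step in your Case A), or else reading ``min-arc from $i$'' at the level of the current super-vertex, where the equivalence is just Lemma \ref{lemma:m1} applied at the last contraction. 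You should state explicitly which of these you are assuming rather than letting Case B quietly smuggle in uniqueness.
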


{
\begin{corollary}
\label{cor3}
Suppose the function {\tt FindTgraphs} is run on a graph $G(\mathcal{S},\mathcal{A},\mathcal{U})$ satisfying Assumptions \ref{A1} and \ref{A2}:
{\tt FindTgraphs}$(r,k,G,\Gamma,\mathcal{B}')$. 
Let $c_1$, ..., $c_N$ be a sequence of  nested cycles created after the addition of arcs of weights
$\gamma_1 \le \gamma_2 \le \ldots \le \gamma_N$  respectively, i.e., $\mathcal{S}(v_{c_1})\subset\mathcal{S}(v_{c_2})\subset\ldots\subset\mathcal{S}(v_{c_N})$.
Suppose each set of vertices $\mathcal{S}(v_{c_l})$ contains a vertex $x$ with an outgoing arc $x\rightarrow y$ 
such that $U_{xy} = U_{\min}(x)$, but $y\notin \mathcal{S}(v_{c_l})$, $l = 1,\ldots,N-1$.
Suppose there is an arc $(i\rightarrow j)\in\mathcal{A}$ such that $i\in \mathcal{S}(v_{c_1})$, $j\notin \mathcal{S}(v_{c_N})$.
Then  $U^{(N)}_{ij} = U_{ij}-U_{\min}(i) + \gamma_{N}$.
\end{corollary}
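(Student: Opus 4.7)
My plan is to prove the identity by induction on $N$, the length of the nested chain of cycles. The base case $N=1$ is a direct application of the update rule \eqref{update_rules}: when $c_1$ is detected at critical weight $\gamma_1$, any arc $(i\rightarrow j)$ with $i \in \mathcal{S}(v_{c_1})$ and $j \notin \mathcal{S}(v_{c_1})$ has its weight modified to $U^{(1)}_{ij} = U_{ij} + \gamma_1 - U_{\mu(i)} = U_{ij} - U_{\min}(i) + \gamma_1$, which matches the claim.

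For the inductive step, assume the formula holds for every nested chain of length at most $N-1$ satisfying the hypotheses. Given the chain $c_1 \subset \cdots \subset c_N$, applying the inductive hypothesis to the sub-chain $c_1 \subset \cdots \subset c_{N-1}$ and the arc $(i \rightarrow j)$ yields $U^{(N-1)}_{ij} = U_{ij} - U_{\min}(i) + \gamma_{N-1}$. In the graph obtained after contracting $c_{N-1}$, the tail of $(i \rightarrow j)$ is the super-vertex $v_{c_{N-1}}$, so when $c_N$ is formed the update rule gives $U^{(N)}_{ij} = U^{(N-1)}_{ij} + \gamma_N - U_{\mu(v_{c_{N-1}})}$. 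It therefore suffices to show $U_{\mu(v_{c_{N-1}})} = \gamma_{N-1}$, as then the $\gamma_{N-1}$ terms telescope and produce the desired formula.

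To compute $U_{\mu(v_{c_{N-1}})}$ I will use the hypothesis that some $x \in \mathcal{S}(v_{c_{N-1}})$ has an original min-arc $x \rightarrow y$ with $y \notin \mathcal{S}(v_{c_{N-1}})$. Since the cycles are nested, $y$ also lies outside every smaller $\mathcal{S}(v_{c_l})$, so this arc is never discarded at any contraction step. Let $l_0$ be the smallest index such that $x \in \mathcal{S}(v_{c_{l_0}})$, and apply the inductive hypothesis to the tail sub-chain $c_{l_0} \subset \cdots \subset c_{N-1}$ with $(x \rightarrow y)$ playing the role of $(i \rightarrow j)$; the hypotheses of the corollary are inherited directly from the ambient chain. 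This yields $U^{(N-1)}_{xy} = U_{xy} - U_{\min}(x) + \gamma_{N-1} = \gamma_{N-1}$. Finally, Lemma \ref{lemma:m1} applied iteratively at each contraction (equivalently, Corollary \ref{cor2}) guarantees that every outgoing arc from $v_{c_{N-1}}$ carries modified weight at least $\gamma_{N-1}$, so the minimum is attained by $x\rightarrow y$ and $U_{\mu(v_{c_{N-1}})} = \gamma_{N-1}$, closing the induction.

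The main obstacle I foresee is the bookkeeping needed to legitimately invoke the inductive hypothesis for the auxiliary arc $x \rightarrow y$: the vertex $x$ may enter the nested chain only at some intermediate level $l_0 > 1$, so one must work with a sub-chain rather than the full chain. This is handled by observing that the hypotheses of the corollary (nesting and existence of an exiting min-arc at each level) restrict cleanly to any tail sub-chain. Once this restriction is in place, both applications of the inductive hypothesis — to the original arc $(i \rightarrow j)$ and to the auxiliary arc $(x \rightarrow y)$ — are justified, and the telescoping cancellation delivers the result.
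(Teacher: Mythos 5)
Your proof is correct and follows essentially the same route as the paper: the paper's argument is exactly the telescoping computation $U^{(l)}_{ij}=U_{ij}-U_{\min}(i)+\gamma_l$ based on the identity $U_{\min}(v_{c_l})=\gamma_l$ for $1\le l\le N-1$, which your induction on $N$ reproduces step by step. The only difference is cosmetic: the paper simply asserts $U_{\min}(v_{c_l})=\gamma_l$ (implicitly via Lemma \ref{lemma:m1} and Corollary \ref{cor2} together with the hypothesis of an exiting min-arc), whereas you justify it explicitly by applying the induction hypothesis to the auxiliary arc $x\rightarrow y$ on a tail sub-chain, which is a legitimate and slightly more detailed way of closing the same argument.
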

\begin{proof}
For all $1\le l\le N-1$ we have $U_{\min}(v_{c_l}) = \gamma_l$. Therefore, 
\begin{align*}
U^{(1)}_{ij} &= U_{ij}-U_{\min}(i) + \gamma_1,\\
U^{(2)}_{ij} &= U_{ij}^{(1)} - U_{\min}(v_{c_1}) + \gamma_2 = U_{ij}-U_{\min}(i) + \gamma_2,\\
&\ldots\\
U^{(N)}_{ij} &= U_{ij}^{(N-1)} - U_{\min}(v_{c_{N-1}}) + \gamma_N = U_{ij}-U_{\min}(i) + \gamma_N.
\end{align*}
\end{proof}
}
\begin{lemma}
\label{lemma:m2}
Let $\mathcal{S}'\subset \mathcal{S} $ be a subset of vertices of a directed graph $G(\mathcal{S},\mathcal{A})$.
Suppose every vertex in $\mathcal{S}'$ has at least one outgoing arc. Then
\begin{enumerate}[$(i)$]
\item 
if all arcs with tails in $\mathcal{S}'$ have heads also in $\mathcal{S}'$, then there is at least one directed cycle formed 
by the arcs with tails in $\mathcal{S}'$;
\item
if the arcs with tails in $\mathcal{S}'$ form no directed cycles, then at least one of them must head in $\mathcal{S}\backslash\mathcal{S}'$.
\end{enumerate}
\end{lemma}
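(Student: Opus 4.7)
My plan is to prove both parts of the lemma by a standard finiteness argument, with part (ii) following directly from part (i) by contraposition. The only ingredient needed beyond the hypotheses is that $\mathcal{S}$ (and hence $\mathcal{S}'$) is finite, which holds by Assumption \ref{A1}.

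For part (i), I would construct an arbitrarily long walk inside $\mathcal{S}'$ and invoke the pigeonhole principle. Concretely, pick any $v_0 \in \mathcal{S}'$. By hypothesis, $v_0$ has an outgoing arc, and by the closure assumption of (i), its head lies in $\mathcal{S}'$; call that head $v_1$. Iterating, having selected $v_k \in \mathcal{S}'$, the same two hypotheses produce an arc $v_k \to v_{k+1}$ with $v_{k+1} \in \mathcal{S}'$. This yields an infinite sequence $v_0, v_1, v_2, \ldots$ of vertices in the finite set $\mathcal{S}'$, so there exist indices $k < \ell$ with $v_k = v_\ell$. The segment $v_k \to v_{k+1} \to \cdots \to v_\ell = v_k$ is then a directed cycle, and every one of its arcs has its tail in $\mathcal{S}'$, as required.

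For part (ii), I would argue by contraposition. Suppose for contradiction that every arc with tail in $\mathcal{S}'$ has its head also in $\mathcal{S}'$. Then the hypotheses of part (i) are satisfied, so there exists a directed cycle whose arcs all have tails in $\mathcal{S}'$, contradicting the hypothesis of (ii) that no such cycle exists. Hence at least one arc with tail in $\mathcal{S}'$ must have its head in $\mathcal{S} \setminus \mathcal{S}'$.

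There is essentially no hard step here; this is a basic graph-theoretic observation. The only point worth being a little careful about is verifying in part (i) that the inductive construction of the walk actually stays in $\mathcal{S}'$ at every step, which is exactly what the closure condition of (i) is there to guarantee; without it, an arc could exit $\mathcal{S}'$ and the induction would break. No additional results from the paper are needed beyond Assumption \ref{A1}.
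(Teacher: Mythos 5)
Your proof is correct, and it takes a slightly different (though equally elementary) route than the paper. The paper's argument is a counting one: select one outgoing arc for each vertex of $\mathcal{S}'$; if all selected arcs stay inside $\mathcal{S}'$, the resulting subgraph on $\mathcal{S}'$ has as many arcs as vertices, so it cannot be a directed forest and must contain a cycle, and statement $(ii)$ is then just the negation of $(i)$. You instead build a walk $v_0\to v_1\to v_2\to\cdots$ that the closure hypothesis keeps inside $\mathcal{S}'$, and invoke finiteness (Assumption \ref{A1}) plus the pigeonhole principle to find a repeated vertex, then pass to the contrapositive for $(ii)$. Both arguments rest on the same two facts (every vertex of $\mathcal{S}'$ has an outgoing arc, and $\mathcal{S}'$ is finite); the paper's version packages the finiteness into the arc-versus-vertex count, while yours makes the dynamics explicit, which arguably matches the ``follow the most likely transition'' intuition used elsewhere in the paper. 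One tiny point of hygiene in your part $(i)$: with arbitrary $k<\ell$ satisfying $v_k=v_\ell$ the segment $v_k\to\cdots\to v_\ell$ is only a closed walk, which could revisit vertices; to exhibit a genuine (simple) directed cycle, take $\ell$ minimal among repetitions, or simply note that any closed walk contains a directed cycle. This is a one-line fix, not a gap.
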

\begin{proof} 
Let us select one outgoing arc for each vertex in $\mathcal{S}'$ and denote the set of the selected arcs  by $\mathcal{A}'$.
If all arcs in $\mathcal{A}'$ head in $\mathcal{S}'$ then $|\mathcal{A}'| =  |\mathcal{S}'|$ in the graph $G':=G'(\mathcal{S}',\mathcal{A}')$.
Hence $G'$ cannot be a directed forest. Hence it contains  at least one cycle which proves Statement $(i)$. Statement $(ii)$ is  the negation of $(i)$.
\end{proof}


\begin{proof} (Theorem \ref{theorem:m1}.)
We start with Statement 4 because its proof is the shortest. A vertex $i$ is an absorbing state of $T_p$
 if and only if 
the weight of min-arcs from $i$ in the original graph $G$ is greater than $\theta_p$. 
In turn, this happens if and only if $i$ has no outgoing arc in $\Gamma_{K_p}$, i.e., 
$i$ is absorbing in $\Gamma_{K_p}$.

{\bf Auxiliary Statement:} \emph{
At the end of steps $K_p$ and $p$  { of Algorithms 1 and 2 respectively  for all $p\ge 0$} we have:
$\mathcal{B}'\subseteq \mathcal{B}$ and the sets of distinct values in $\mathcal{B}'$ and $\mathcal{B}$ coincide.
}

Statements 1, 2, and 3 
and the auxiliary statement
will be proven
by induction in the recursion level $r$ of Algorithm 2.

{\bf Basis.} The initial graphs $T_0 = T_0(\mathcal{S},\emptyset,\emptyset)$ and $\Gamma_0=\Gamma_0(\mathcal{S},\emptyset,\emptyset)$ in Algorithms 1 and 2 coincide. 
Furthermore, as the initializations in Algorithms 1 and 2 are complete, 
$\mathcal{B}'\subseteq \mathcal{B}$, and the sets of distinct arc weights in the buckets $\mathcal{B}'$ and $\mathcal{B}$ are the same.
This gives us the induction basis. 

{\bf Induction Assumption.} Assume that at step $p_r$ of Algorithm 2 and the corresponding step $K_{p_r}$ of Algorithm 1 we have: 
\begin{itemize}
\item
$\mathcal{B}'\subseteq\mathcal{B}$;
\item
 the set of distinct arc weights in $\mathcal{B}$ and $\mathcal{B}'$ are the same; 
 \item
 all closed communicating classes 
are contracted into single super-vertices in $T^{(r)}_{p_r}$ and $\Gamma^{(r')}_{K_{p_r}}$, where $r'$ is the recursion level in Algorithm 1 at the end of step $K_{p_r}$;
{ furthermore, the set of vertices of $T^{(r)}_{p_r}$ and $\Gamma^{(r')}_{K_{p_r}}$ coincide, and each super-vertex $v$ of $T^{(r)}_{p_r}$ has 
the corresponding super-vertex $v'$ of $\Gamma^{(r')}_{K_{p_r}}$ such that $\mathcal{S}(v) = \mathcal{S}(v')$};
\item
$\Gamma_{K_{p_r}}$ is a subgraph of $T_{p_r}$;
\item
the sets of distinct values in $\{\theta_p\}_{p=1}^{p_r}$ and $\{\gamma_{k}\}_{k=1}^{ K_{p_r}}$ coincide.
\end{itemize}
To prove the induction step, we need to show that Statements 1, 2, 3, and the auxiliary statement hold up to $p=p_{r+1}$.

{\bf Induction Step.} 
\begin{enumerate}
\item
The induction assumptions imply that  
all graphs $\Gamma_{k}$, $K_{p-1}<k\le K_p$,
are subgraphs of $T_p$ for all $p$ such that the recursion levels remain $r'$  and $r$ in Algorithms 1 and 2 respectively. 
For such $p$, the graphs $T_p^{(r)}$ and $\Gamma_{K_p}^{(r')}$ are built by adding  arcs from 
buckets $\mathcal{B}$ and $\mathcal{B}'$ respectively, while no new arcs are added to these buckets and no arc weights are modified.
Hence, Statements 1, 2, 3, and the auxiliary statement hold for all  such $p$. Therefore, if no cycles are encountered by Algorithm 1
at steps $K_{p_r}<k\le K_{p_{r+1}-1}$, Statements 1, 2, 3, and the auxiliary statement hold for $K_{p_r}<k\le K_{p_{r+1}-1}$.

\item
Now we show that Statements 1, 2, 3, and the auxiliary statement hold
independent of whether or not cycles were encountered in Algorithm 1 at some $K_{p_r}<k\le K_{p_{r+1}-1}$.
Note that a cycle can be formed in Algorithm 1  at some step $K_p<k\le K_{p+1}$ where $p_r<p<p_{r+1}$ only if
an open communicating class is formed at step $p$ of Algorithm 2.

Since no arcs are added to the bucket $\mathcal{B}$ by Algorithm 2 at steps $p_r<p<p_{r+1}$, the graphs $T_p^{(r)}$ 
consist of all min-arcs
of the graph $G^{(r)}$ of weights $\le \theta_{p_{r+1}-1}$. 
Now we consider Algorithm 1 for steps $K_{p_r}<k\le K_{p_{r+1}-1}$ and prove Statements 1, 2, 3,
and the auxiliary statement by induction in the number of cycles.

Let $c_1$ be the first cycle created in $\Gamma^{(r')}$ at step $k_1$ after the addition of an arc of weight $\theta_p$.
Since $\Gamma_{k_1}^{(r')}$ is a subgraph of $T_{p}^{(r)}$,
the cycle $c_1$ must be a subclass of an open communicating class $C$ created in $T^{(r)}$ 
after the addition of a set of arcs of weight $\theta_p$.
Therefore, $c_1$ is an open communicating class in $T_p^{(r)}$. 
This means the set of min-arcs with tails in $c_1$ and heads not in $c_1$ in not empty. 
All these min-arcs are in $T_p^{(r)}$, and none of them is in $\Gamma_{k_1}^{(r')}$.
By Lemma \ref{lemma:m1},  the weights of these min-arcs become $\theta_p$ after the modification. 
One of them is picked and added to the bucket $\mathcal{B}'$.
Hence, this arc will be added to $\Gamma^{(r'+1)}$ at some $k_1<k\le K_p$. This allows us to conclude that at least one more arc of weight $\theta_p$
will be removed from $\mathcal{B}'$ after the cycle $c_1$ is formed. Hence, $c_1$ is not a closed communicating class in $\Gamma_{K_p}^{(r')}$.
Therefore, Statements 1, 2, 3 and the auxiliary statement hold for $K_{p_r}<k\le\min\{k_2,K_{p_{r+1}-1}\}$.

Assume that Statements 1, 2, 3,
and the auxiliary statement  hold  up to step $k_{N} < K_{p_{r+1} -1}$ of Algorithm 1.
Let  cycle $c_N$ be encountered at step $k_N$ in Algorithm 1 after the addition of an arc of weight $\theta_p$.
The set of vertices $\mathcal{S}^{(r')}(v_{c_N})$ is a subclass of an open communicating class $C$ of the graph $T_p$ by the induction assumption. 
Hence  the set of the min-arcs
with tails in $\mathcal{S}^{(r')}(v_{c_N})$ and heads not in $\mathcal{S}^{(r')}(v_{c_N})$ is not empty. All of these arcs are in $T_p^{(r)}$
but not in $\Gamma_{k_N}^{(r')}$.
By Corollary \ref{cor2}, their weights will become
$\theta_p$ during step $k_N$ of Algorithm 1. 
One of these min-arcs will be added to the bucket $\mathcal{B}'$ and then removed from it 
at some step $k_N<k\le K_{p}$. Hence the cycle $c_N$ cannot be a closed communicating class in $T_p^{(r)}$.
This proves Statements 1, 2, 3, and the auxiliary statement  for all $K_{p_r}<k\le K_{p_{r+1}-1}$.

\item
Now we show that Statements 1, 2, 3, and the auxiliary statement hold for $K_{p_{r+1}-1}+1\le k\le K_{p_{r+1}}$.
Let $C$ be the closed communicating class formed in the graph $T_{p_{r+1}}^{(r)}$
after the addition of the set of min-arcs of weight $\theta_{p_{r+1}}$.
Therefore, all min-arcs from the vertices in $C$ head in $C$. 
After contracting $C$ into a single super-vertex $v_C$, the weight of min-arcs from it will be
\begin{equation}
\label{te1}
\min_{i\in C,~j\notin C}[U_{ij}-U_{\min}(i) +\theta_{p_{r+1}}] >\theta_{p_{r+1}}.
\end{equation}
(Here $U_{\min}(i)$ is the weight of min-arcs from $i$ in the graph $G^{(r)}$.)
Let the recursion level at the end of step $K_{p_{r+1}-1}$ in Algorithm 1 be $r''$, and $C'$ be the set of vertices in
$\Gamma^{(r'')}$ corresponding to $C$. 
If no cycles were formed in Algorithm 1 with vertices in $C$ then $C'=C$.
Otherwise, some of the vertices of $C$ are contacted into super-vertices in $\Gamma^{(r'')}$. 
In this case, the subset of vertices of $C$ contracted
into super-vertices is an open communicating subclass of $C$.
Let us show that $C'$ is a closed communicating class of $\Gamma_{K_{p_{r+1}}}^{(r'')}$.

Lemma \ref{lemma:m2} implies that the min-arcs from $C'$ all heading in $C'$ form at least one cycle $c$. Consider two cases.

{\bf Case 1: cycle $c$
includes whole  $C'$.}
Then by Corollary \ref{cor3} the weight of min-arcs from the vertex $v_c$ in Algorithm 1 will be given by Eq. \eqref{te1}, 
i.e., the same as it is in Algorithm 2,
and one of those min-arcs will be added to $\mathcal{B}'$. 
Hence Statements 1, 2, 3, and the auxiliary statement hold at $p=p_{r+1}$ and $K_{p_{r+1}-1}<k\le K_{p_{r+1}}$.

{\bf Case 2: the cycle $c$ does not include all vertices from $C'$.} 
Since $C$ does not contain closed communicating subclasses,  
the set of min-arcs in $G^{(r)}$ with tails in $c$ and heads not in $c$ is not empty.
By Corollary \ref{cor3}, these min-arcs will be the min-arcs from $v_c$, and their weights will be $\theta_p$.
One of these min-arcs will be added to $\mathcal{B}'$ and then removed.
Hence  the min-arcs from $(C\backslash\mathcal{S}^{(r)}(v_c))\cup \{v_c\}$ head to $(C\backslash\mathcal{S}^{(r)}(v_c))\cup \{v_c\}$.
By Lemma \ref{lemma:m2}, they form at least one cycle $c$. Again, there are two options: either $c$ includes all vertices of $C$ or not.
In the former case, using the argument from Case 1, we prove the induction step. In the latter case, we use the argument from Case 2.
Repeating this argument at most $|C|-1$ number of times (as each new cycle includes at least one more vertex of $C'$ in comparison with the previous one), 
we obtain a cycle including all vertices of $C$.

Repeating this argument for all closed communicating classes formed in $T^{(r)}$ at step $p_{r+1}$, we conclude that  
all closed communicating classes encountered in Algorithm 2 at steps $p_{r+1}$  will be contracted 
into single super-vertices by both Algorithms 1 and  2, and arcs of the same weight will be added to $\mathcal{B}$ and $\mathcal{B}'$.
Hence the induction step is proven. This completes the proof of Theorem \ref{theorem:m1}.
\end{enumerate}
\end{proof}

\end{appendices}



\begin{thebibliography}{9}
\bibitem{Astumian} R.~D. Astumian, {Biasing the random walk of a molecular motor},   J. Phys.: Condens. Matter {\bf 17}, S3753 (2005)

\bibitem{short1}
N. Arkus, V. Manoharan and M.~P. Brenner, 
Minimal Energy Clusters of  Hard Spheres with Short Ranged Attractions, 
Phys Rev Lett, {\bf 103},118303 (2009).

\bibitem{short3}
N. Arkus, V. Manoharan and M.~P. Brenner, 
Deriving Finite Sphere Packings, SIAM J Discrete Mathematics, {\bf 25}, 1860-1901 (2011).  

\bibitem{berglund}
N. Berglund and S. Dutercq, The Eyring - Kramers Law for Markovian Jump Processes with Symmetries,
J Theor Probab  (First online: 21 May 2015) DOI 10.1007/s10959-015-0617-9

\bibitem{bouchet}
F. Bouchet,  and J.  Reygner, Generalization of the Eyring-Kramers transition rate formula to irreversible diffusion processes, 
Annales Henri Poincare, First online: 11 June 2016, pp. 1 - 34
{\tt arXiv:1507.02104v1}

 
\bibitem{Bovier_1} A. Bovier, M. Eckhoff, V. Gayrard, and M. Klein, {Metastability and Low Lying Spectra in Reversible Markov Chains}, Comm. Math. Phys. {\bf 228}, 219-255  (2002)

\bibitem{Bovier_2} A. Bovier, M. Eckhoff, V. Gayrard, and M. Klein, {Metastability in reversible diffusion processes 1. 
Sharp estimates for capacities and exit times}, J. Eur. Math. Soc. {\bf 6}, 399-424 (2004)

\bibitem{Bovier_3}  A. Bovier, V. Gayrard, M. Klein, {Metastability in reversible diffusion processes 2. Precise estimates for small eigenvalues}, J. Eur. Math. Soc. {\bf 7}, 69-99  (2005)

\bibitem{Bovier2016}
A. Bovier and F. den Hollander, Metastability: A Potential-Theoretic Approach, Springer, 2016

\bibitem{cam1}
M.~K. Cameron, 
Computing Freidlin's cycles for the overdamped Langevin dynamics, 
J. Stat. Phys. {\bf 152}, 3 ,  493-518 (2013)

\bibitem{Cameron} M.~K. Cameron, {Computing the Asymptotic Spectrum for Networks Representing Energy Landscapes using the Minimal Spanning Tree}, M. Cameron, Networks and Heterogeneous Media, {\bf 9}, 3, Sept. 2014.

\bibitem{CG} M.~K. Cameron and T. Gan, {Spectral analysis and clustering of large stochastic networks. Application to the Lennard-Jones-75 cluster.} 
Molecular Simulation, {\bf 42}, 16,  1410-1428 (2016)

\bibitem{chu-liu}
Chu, Y. J.; Liu, T. H., On the Shortest Arborescence of a Directed Graph,  Science Sinica {\bf 14}, 1396 - 1400 (1965)

%
%


\bibitem{Edmonds} J. Edmonds, {Optimum Branchings}, Journal of Research of the National Bureau of Standards - B. Mathematics and Mathematical Physics, {\bf  71 B}, 4 (1967).


\bibitem{freidlin-cycles}
M.~I. Freidlin,  
{\it Sublimiting distributions and stabilization of solutions of parabolic equations with small parameter},
Soviet Math. Dokl. {\bf 18}  4,  1114-1118 (1977)

\bibitem{freidlin-physicad}
M.~I. Freidlin,  
{ Quasi-deterministic approximation, metastability and stochastic resonance},
Physica D {\bf 137},  333-352 (2000)

\bibitem{freidlin_symmetry}
M. Freidlin, On stochastic perturbations of dynamical systems with a ``rough" symmetry. Hierarchy of Markov chains,
J. stat. Phys. {\bf 157}, 6,1031-1045, (2014)

\bibitem{FW} M.~I. Freidlin and A.~D. Wentzell, {Random Perturbations of Dynamical Systems}, 3rd
ed, Springer-Verlag Berlin Heidelberg, 2012.

\bibitem{GS} B. Gaveau and L.~S. Schulman,  {Theory of nonequilibrium first-order phase
transitions for stochastic dynamics}, J. Phys. {\bf 33}, 4837-4850 (2000)

\bibitem{Gershgorin}
Gerschgorin, S. \:{U}ber die Abgrenzung der Eigenwerte einer Matrix, Izv. Akad. Nauk. USSR, Otd. Fiz.-Mat. Nauk {\bf 7}, 749-754 (1931)

\bibitem{holmes}
M. Holmes-Cerfon, S.~J. Gortler, M.~P. Brenner, A geometrical approach to computing free-energy landscapes from short-ranged potentials,
Proc. Natl. Acad. Sci. {\bf 110} , 1,  E5 - E14 (2013)




%

\bibitem{short2}
G. Meng, N. Arkus, M.P. Brenner and V. Manoharan, The Free Energy Landscape of Hard Sphere Clusters, Science {\bf  327},  560 (2010)

\bibitem{dept}
Metzner, P., Schuette, Ch., and Vanden-Eijnden, E., 
Transition path theory for Markov jump
processes, SIAM Multiscale Model. Simul. {\bf 7}, 1192 - 1219 (2009)

%
%

\bibitem{norris}
J.~ P. Norris, Markov Chains, Cambridge University Press, Cambridge, New York, Madrid, Cape Town, Singapore, San Paulo, Dehli, 1997



\bibitem{wales0}
 D.~J. Wales, { Discrete Path Sampling}, 
Mol. Phys., {\bf 100},  3285 -- 3306 (2002)

\bibitem{wales_book}
D.~J. Wales, ``Energy Landscapes: Applications to Clusters, Biomolecules and Glasses", Cambridge University Press, 2003


\bibitem{wales_landscapes}
 D.~J. Wales, { Energy landscapes: calculating pathways and rates}, 
International Review in Chemical Physics, {\bf 25}, 1-2,  237-282 (2006)

\bibitem{pathsample}
\begin{verbatim} http://www-wales.ch.cam.ac.uk/examples/PATHSAMPLE/\end{verbatim}


\bibitem{web}
\begin{verbatim} http://www-wales.ch.cam.ac.uk\end{verbatim}



\bibitem{Wentzell} A.~D. Wentzell, {On the asymptotics of eigenvalues of matrices with elements of order $exp \{ -V_{ij} / 2 (\varepsilon^2) \} $}, 
(in Russian)
Dokl. Akad. Nauk SSSR, {\bf 202},  263-265 (1972)

\end{thebibliography}
\end{document}